\patchcmd{\ttlh@hang}{\parindent\z@}{\parindent\z@\leavevmode}{}{}
\patchcmd{\ttlh@hang}{\noindent}{}{}{}
\newmdenv[skipabove=4mm,linewidth=1]{MyFrame}   
\numberwithin{equation}{section}
\journal{arxiv}
\newtheorem{theorem}{Theorem}[section]
\newtheorem{lemma}[theorem]{Lemma}
\newtheorem{remark}[theorem]{Remark}
\begin{document}
\begin{frontmatter}

\title{Mass-conservative and positivity preserving second-order semi-implicit methods for high-order parabolic equations}

\author[addr1,addr2]{Sana Keita\corref{corres}}     \ead{sana.keita@um6p.ma}
\cortext[corres]{Corresponding author.}
\author[addr1,addr2]{Abdelaziz Beljadid} \ead{abdelaziz.beljadid@um6p.ma}
\author[addr2]{Yves Bourgault}     \ead{ybourg@uottawa.ca}
\address[addr1]{International Water Research Institute, Mohammed VI Polytechnic University,  Morocco} 
\address[addr2]{Department of Mathematics and Statistics, University of Ottawa, Canada \qquad\qquad\qquad\qquad}

\begin{abstract}
We consider a class of finite element approximations for fourth-order parabolic equations that can be written as a system of second-order equations by introducing an auxiliary variable. In our approach, we first solve a variational problem and then an optimization problem to satisfy the desired physical properties of the solution such as conservation of mass, positivity (non-negativity) of solution and dissipation of energy. Furthermore, we show existence and uniqueness of the solution to the optimization problem and we prove that the methods converge to the truncation schemes \cite{Berger1975}.  We also propose new conservative truncation methods for high-order parabolic equations. A numerical convergence study is performed and a series of numerical tests are presented to show and compare the efficiency and robustness of the different schemes.
\end{abstract}

\begin{keyword}
Fourth-order equation \sep Mixed finite element  \sep Second-order time-accuracy  \sep Conservative scheme \sep Positivity preserving 
\end{keyword}
\end{frontmatter}

\section{Introduction}
High-order partial differential equations often arise in systems from fluid and solid mechanic problems for industrial applications. For instance, the equations for thin film liquid motion \cite{ehrhard_davis_1991,greenspan1978,Hocking1981,Huppert1982,Moriarty1991}, the Cahn-Hilliard (CH) equations for phase separation modeling in a binary mixture \cite{CAHN1962,CahnHilliard1958}, the fourth-order model for the approximation of crack propagation \cite{AMIRI2016}, models for tumor-growth  \cite{Colli2017,Wise2008}, models for biological entities \cite{Elson2010,Khain2008}, models for ocean and atmosphere circulation \cite{DELHEZ2007}, image processing \cite{Bertozzi4}, models of unstable infiltration through porous media \cite{Beljadid2020,Cueto2009}, etc. 

Discretization of high-order spatial operators is a major issue encountered when numerically solving these equations. As a result, several different procedures have been employed over the years to deal with higher-order equations, for instance, mixed formulations, where the governing equations are split into a coupled system of lower-order differential equations \cite{BoffiBrezziFortin2013,Elliott1989}.

Positivity of solutions of second-order parabolic problems mainly follows from the positivity of the initial data and the form of boundary data \cite{Aronson1967,konkov2018}. Higher-order partial differential equations are different from second-order partial differential equations mainly in the lack of maximum principle, i.e.\ solutions which are initially positive may become negative in certain regions over time \cite{Fbernis1987}. Although there exists positive or bounded solutions in some particular cases (see the discussion in section \ref{SectModelEquation}), the computations of high-order equations require special numerical techniques in order to preserve the boundedness and/or positivity of the solutions. In fact, as mentioned in \cite{Bertozzi1995}, solutions which are initially strictly positive can reach the value of zero in some regions for a finite interval of time and become again strictly positive. There have been many studies on numerical methods for  high-order parabolic equations that inherit the positivity, boundedness, mass conservation or energy dissipation property from the physical equations \cite{Barrett3,Barrett1,Barrett2,Bertozzi1,Bertozzi6,Grun2000,SHIN2011,YE2005,Bertozzi3}. Numerical methods where positivity is weakly imposed by using variational inequalities are proposed in \cite{Barrett3,Barrett1,Barrett2}. However, such schemes turned out to be very slow as we will see in section~\ref{SectNumResults}. Numerical schemes using entropy to preserve positivity of the discrete solution are proposed in \cite{Grun2000,Bertozzi3}. However, these schemes are nonlinear and only first-order accurate in time. Moreover, the iterative procedures to solve the nonlinear systems require good preconditioners, a good initial guess of the solution and a large number of iterations to converge at each time step \cite{Bertozzi1}. 

In this study, we focus on the numerical approximation of a parabolic equation involving the bi-harmonic operator. We propose several numerical methods that preserve the mass and satisfy the boundedness property of the solution, whenever the solution of the continuous problem should have such property. The main idea consists in first solving a variational problem  and then an optimization problem to satisfy the desired physical properties (conservation of mass, positivity or boundedness of solutions and dissipation of energy) on the solution. We prove that the methods converge to the truncation schemes introduced in \cite{Berger1975} for second-order parabolic equations. We also propose new techniques to build conservative truncation methods for high-order parabolic equations.  The proposed schemes maintain second-order accuracy in time by using the temporal approximation techniques proposed in \cite{Keita2021}. These techniques mainly consist in using second-order backward difference formula for the time derivative, an extrapolation formula and a special technique (using Taylor approximation) for the discretization of the nonlinear terms. This makes the proposed schemes efficient in terms of computational cost since the numerical method developed to solve the variational problem only deals with a linear system at each time step. However, as opposed to the present study, positivity and/or boundedness of the computed solution are not guaranteed in \cite{Keita2021}. We perform numerical convergence analysis in space using both a manufactured and analytical solutions to demonstrate optimal convergence rates of the $L^2$- and $H^1$-error norms for sufficiently regular solutions. 

The paper is organized as follows. In section~\ref{SectModelEquation}, the model equation is introduced and the existence of solutions to the problem is discussed. In section~\ref{SectionNumMethods}, an analysis of the optimization problem is performed  and several finite element methods resulting from the analysis are proposed.  In Section~\ref{SectNumResults}, the spatial convergence rates, accuracy and robustness of the proposed schemes are numerically investigated by means of  numerical experiments. Second-order time-accuracy, energy dissipation and mass conservation properties of the proposed schemes are illustrated. Section~\ref{SectConclusion} provides a summary and concluding remarks.

\section{Model equations}
\label{SectModelEquation}
Let $\Omega\subset\mathbb{R}^d$, where $d=2$ or $3$, be an open bounded set with sufficiently smooth boundary $\partial \Omega$, and $T>0$ a fixed time. We focus on a class of fourth-order parabolic differential equation of the form
\begin{equation}
\partial_tu+\gamma\nabla\cdot\big(f(u)\nabla \Delta u\big) - \nabla\cdot\big(f(u)\nabla \varphi'(u)\big)=0 \quad \text{in } \Omega_T:=\Omega\times(0,T),
\label{GenThinFilmEquation}
\end{equation}
with unknown $u:\Omega\times[0,T]\rightarrow\mathbb{R}$, the given functions $f:\mathbb{R}\rightarrow\mathbb{R}_{\geqslant 0}$ and $\varphi:\mathbb{R}\rightarrow\mathbb{R}$ are  smooth and $\gamma>0$ is a constant. The function $f$ is a diffusional or mobility term and $\varphi$ is a free energy function. Depending on the applications, $u$ can describe the height of a liquid phase spreading on a solid surface \cite{greenspan1978,Hocking1981},  the concentration, volume fraction or density of a phase in a mixture \cite{CAHN1962,CahnHilliard1958}, etc. 

By introducing an auxiliary variable $w$, the fourth-order equation \eqref{GenThinFilmEquation} can be written in the following system of second-order equations
\begin{equation}
\begin{aligned}
&\partial_tu-\nabla\cdot\big(f(u)\nabla w\big)=0 \quad \text{in } \Omega_T,\\
&w=-\gamma\Delta u + \varphi'(u) \quad \text{in } \Omega_T.
\end{aligned}
\label{ThinFilmSyst}
\end{equation}
System \eqref{ThinFilmSyst} is to be supplemented by the initial condition
\begin{equation}
u(\bm{x},0)=u_0(\bm{x}) \quad \text{in } \Omega,
\label{ThinFilmSystInitialCondition}
\end{equation}
and the no-flux boundary conditions
\begin{equation}
\nabla u\cdot\bm{n}=\nabla w\cdot\bm{n}=0 \quad \text{on } \partial\Omega\times(0,T),\\
\label{HomoNeumannBoundCond}
\end{equation}
where $\bm{n}$ is the exterior unit normal vector to $\partial\Omega$.

There are two important properties for the  equations \eqref{ThinFilmSyst} with the natural boundary conditions \eqref{HomoNeumannBoundCond}. The first one is the conservation of mass. In fact,
\begin{equation}
\dfrac{d}{dt}\int_{\Omega}u\,d\bm{x}=\int_{\Omega}\nabla\cdot\big(f(u)\nabla w\big)\,d\bm{x}=\int_{\partial\Omega}f(u)\nabla w\cdot\bm{n}\,d\bm{x}=0\text{ }\Longrightarrow\int_{\Omega}u\,d\bm{x}=\int_{\Omega}u_0\,d\bm{x},
\label{MassConservation}
\end{equation}
that is the total amount of the phase in the domain must always be equal to the given initial amount of this phase. The equations \eqref{ThinFilmSyst} are associated with the total free energy 
\begin{equation}
J(u)=\int_{\Omega}\left(\frac{\gamma}{2}\vert\nabla u\vert^2+\varphi(u)\right)\,d\bm{x},
\label{EnergyFunctional}
\end{equation}
which satisfies the energy dissipation property, that is
\begin{equation}
\dfrac{d J}{dt}=\int_{\Omega}\big(\gamma\nabla u \nabla \partial_tu+\varphi'(u)\partial_tu\big)\,d\bm{x}=\int_{\Omega}w \partial_tu\,d\bm{x}=-\int_{\Omega}f(u)\vert\nabla w\vert^2\,d\bm{x} \leqslant 0.
\label{EnergyProperty}
\end{equation}

A weak formulation for problem \eqref{ThinFilmSyst} can be obtained by multiplying the first and second equation in \eqref{ThinFilmSyst} by test functions $v$ and $q$, respectively: Find $\big(u ,w\big) \in H^1(\Omega)\times H^1(\Omega)$ such that
\begin{equation}
\begin{aligned}
&\int_{\Omega}u_t v \,d\bm{x}+\int_{\Omega} f(u)\nabla w\cdot\nabla v\, d\bm{x}=0,\quad\forall v \in H^1(\Omega),\\
&\int_{\Omega} w q\,d\bm{x}-\int_{\Omega}\gamma\nabla u\cdot\nabla q\,d\bm{x}-\int_{\Omega}\varphi'(u) q\,d\bm{x}=0,\quad\forall q \in  H^1(\Omega).
\end{aligned}
\label{lubweakForm}
\end{equation}
The existence of a solution $(u,w)\in H^3(\Omega)\times H^3(\Omega)$ to \eqref{ThinFilmSyst} in the weak sense \eqref{lubweakForm} is shown in \cite{Elliott89} for constant  mobilities. It is proven in \cite{Elliott96} the existence of a pair of functions $(u,w)\in H^1(\Omega)\times H^1(\Omega)$  which satisfies \eqref{ThinFilmSyst} for strictly positive phase-dependent mobility functions. However, one can only expect $w$ to be in $H^1(E)$ when the mobility function goes to zero on a subset of $\Omega\setminus E$ \cite{Dai2016,Elliott96}. The degeneracy of the mobility function to zero has a major impact on the regularity of the auxiliary variable $w$. We refer to \cite{Dai2016} for more discussion on the well-posedness of \eqref{ThinFilmSyst} with degenerate phase-dependent mobility function. Many finite element methods for \eqref{GenThinFilmEquation} are based on the splitting technique \eqref{ThinFilmSyst} which uses the auxiliary variable $w$ as an unknown function and hence only requires continuous finite element approximations \cite{Barrett3,Barrett1,Barrett2,ElliottFrench1989}. In the following, we assume that the auxiliary variable $w$ belongs to $H^1(\Omega)$, for the sake of developing our numerical methods. However, our methods work well for less regular solutions, as illustrated with the first numerical test in section~\ref{SectNumResults}.

While there is no comparison principle for the equation \eqref{GenThinFilmEquation}, the existence of positive solution in the one-dimensional case with the property $0\leqslant u(x,t)\leqslant 1$, whenever the initial data satisfies $0\leqslant u_0(x)\leqslant 1$ and
\begin{equation}
f(u)\geqslant0,\text{ } \forall u\in(0,1)\text{ with }f(0)=f(1)=0 \quad\text{and}\quad \varphi^{\prime}(u)=\int_0^u \upsilon(s)\,ds,\text{ } \upsilon\in\mathcal{C}^0([0,1]),   
\end{equation}
is shown in \cite{Yin1992}. The existence of solutions bounded by $1$ in magnitude to \eqref{GenThinFilmEquation} for a specific form of degenerate mobility and logarithmic  energy is shown in \cite{Elliott96}. See sections \ref{SubsectionNullFreeEnergy} and \ref{SubsectionLogFreeEnergy}, for more details on the existence of solution to \eqref{GenThinFilmEquation} for  particular forms of mobility and  free energy functions.

\subsection{Systems with null free energy function}
\label{SubsectionNullFreeEnergy}
For $\varphi=0$, model \eqref{GenThinFilmEquation} reduces to
\begin{equation}
\partial_tu+\gamma\nabla\cdot\big(f(u)\nabla \Delta u\big)=0 \quad \text{in } \Omega_T.
\label{ThinFilmEquation}
\end{equation}
Equation \eqref{ThinFilmEquation} appears in the lubrication approximation of a thin viscous film, which is driven by surface tension. In this context, $u$ describes the height of the liquid film which spreads on a solid surface \cite{greenspan1978,Hocking1981} and hence $u$ is restricted to be positive. In many studies on the equation \eqref{GenThinFilmEquation}, the mobility function $f(u)$ is assumed to be constant. However, a $u$-dependent mobility appeared in the original derivation of the equation \cite{CAHN1962} which has many practical applications.  The typical functional forms of the mobility depend on the condition which is prescribed on the interface between the solid and the liquid. For \eqref{ThinFilmEquation}, we consider the general mobility function of the form \cite{BernisFreeman1990}
\begin{equation}
f(u)=f_0(u)\vert u\vert^p,\quad p\in (0,\infty),
\label{MobFunc}
\end{equation}
where $f_0$ being positive and sufficiently smooth such that $f(u)\sim\vert u\vert^p$ near $u=0$. Since one assumes, on physical grounds, that $u$ is positive, $\vert u\vert$ could be replaced by $u$. 

In one-dimensional space, the existence of positive solution to \eqref{ThinFilmEquation}-\eqref{MobFunc} under appropriate boundary conditions and positive initial condition is shown in \cite{BernisFreeman1990}. The exponent $p$ plays a very important role in the existence and positivity of solutions to \eqref{ThinFilmEquation}-\eqref{MobFunc}. It has been shown by \cite{BernisFreeman1990} that positivity is preserved for $p>1$ and strict positivity is preserved for $p\geqslant4$. $\mathcal{C}^1$ source-type radial self-similar positive solutions for \eqref{ThinFilmEquation}-\eqref{MobFunc} with $f_0(u)\equiv1$  are constructed in \cite{BernisetAl1992}.  However, such type of self-similar solutions with finite mass exists only for $p\in(0,3)$ \cite{BernisetAl1992,Ferreira1997}. It also has been shown by \cite{BernisFreeman1990} in a study which consists in approximating the mobility function $f$ by
\begin{equation}
f_{\varepsilon}(u)=\dfrac{f(u)u^m}{\varepsilon f(u) + u^m}
\label{RegMobFunc}
\end{equation}
and choosing the initial condition $u_{0\varepsilon}=u_0+\varepsilon^q$ with $\varepsilon>0$ and $0<q<\frac{1}{2}$, that the solution of this modified problem remains strictly positive if $m$ is large enough. It can also be shown \cite{BernisFreeman1990} that any subsequence $\lbrace u_{\varepsilon}\rbrace$ converges to a solution of the original problem as $\varepsilon\rightarrow0$. However, this modified mobility function has the disadvantages that $\varepsilon$ has to be chosen dependent on the mesh size for convergence \cite{Barrett1}.  Moreover, tracking the free boundary becomes harder, since for any fixed $\varepsilon>0$, the solution $u_{\varepsilon}$ of the modified problem is strictly positive.

In multi-dimensional space, the construction of positive self-similar solutions for \eqref{ThinFilmEquation}-\eqref{MobFunc} with $f_0(u)\equiv1$  is conducted in \cite{Ferreira1997}. The existence of positive solutions with finite speed of propagation property for \eqref{ThinFilmEquation}-\eqref{MobFunc} with $f_0(u)\equiv1$ and $p\in(\frac{1}{8},2)$ under appropriate boundary conditions, is shown in \cite{Bertsch1998,DalPasso1998} using energy and entropy estimates.   Furthermore, results on the asymptotic behaviour for these solutions as $t\rightarrow\infty$ is established in \cite{DalPasso1998}.  We also mention the works in \cite{Boutat2008,DalPasso2001,Elliott96,Grun1995}, where it is proved the existence of solutions for variants of the equation \eqref{ThinFilmEquation}.

Few previous numerical studies have introduced positivity preserving schemes for \eqref{ThinFilmEquation} \cite{Barrett1,Bertozzi1995,Grun2000,Bertozzi3}. Finite element approximations that guarantee the positivity of the solution of \eqref{ThinFilmEquation} has been initiated by Barrett et al.\ \cite{Barrett1} by enforcing at each time step the positivity of the approximate solution as a constraint. This leads to solving variational inequality problems. Finite difference methods that guarantee the positivity of the solution of \eqref{ThinFilmEquation} by using entropy are proposed in \cite{Grun2000,Bertozzi3}. 

\subsection{Systems with logarithmic free energy functions}
\label{SubsectionLogFreeEnergy}
Equation \eqref{GenThinFilmEquation}, known as Cahn-Hilliard equation, was introduced to model phase separation in binary mixture \cite{CAHN1962,CahnHilliard1958}. In such applications, the quantity $u$ describes the state of a relative concentration $u=(\frac{2X_1}{X_1+X_2}-1)$, where $X_1, X_2\in[0,1]$ are the local concentrations of the two components \cite{GARCKE2003}. Hence, only values of $u$ in the interval $[-1,1]$ are physically meaningful. In this case, the mobility function $f$ should be zero in the pure component, i.e.\ $f(1)=f(-1)=0$. A thermodynamically reasonable choice is \cite{CAHN1994}
\begin{equation}
f(u)=1-u^2.
\label{BoundedMob}
\end{equation}
A phenomenological theory describing the separation of phase is provided by consideration of a free energy $\varphi(u)$, where $\varphi"(u)>0$ for $\theta>\theta_c$ and $\varphi"(u)<0$ for $\theta<\theta_c$. Here, $\theta$ is the absolute temperature and $\theta_c>0$ is a constant.  A main form for the free energy having the required double-well form is given by the logarithmic potential \cite{Barrett3,Barrett2,Elliott96}
\begin{equation}
\varphi(u)=\frac{\theta}{2}\left((1+u)\log\left(\frac{1+u}{2}\right)+(1-u)\log\left(\frac{1-u}{2}\right)\right)+\varphi_0(u),
\label{FreeEnergyLog}
\end{equation}
where $\varphi_0$ is a smooth function on the interval $[-1,1]$. A typical example of $\varphi_0$ is given by \cite{Barrett3,Barrett2}
\begin{equation}
\varphi_0(u)=\dfrac{\theta_c}{2}(1-u^2).
\label{F0function}
\end{equation}
One important property of equation \eqref{GenThinFilmEquation} with the degenerate  mobility function \eqref{BoundedMob} and the logarithmic free energy \eqref{FreeEnergyLog}-\eqref{F0function} is that solutions with initial data satisfying $\vert u_0(\bm{x})\vert\leqslant 1$ remain in the interval $[-1,1]$ for all later times \cite{Elliott96}. This is not true in general for fourth-order parabolic equations.

The second derivative of the energy function \eqref{FreeEnergyLog}-\eqref{F0function} is given by
\begin{equation}
\varphi^{\prime\prime}(u)=\theta_c\left(\dfrac{\theta/\theta_c}{1-u^2}-1\right)
\end{equation}
and is negative on $\big(-\sqrt{1-\theta/\theta_c},\sqrt{1-\theta/\theta_c}\big)$ for $\theta<\theta_c$, called spinodal interval.\\
If $\theta$ is closed to $\theta_c$ then $\varphi^{\prime\prime}$ could be approximated by $\theta_c u^2$ in the neighbourhood of $u=0$, and one could approximate the free energy function $\varphi$ as a quartic polynomial in $u$ retaining the double-well form as 
\begin{equation}
\varphi(u)=c(u^2-\beta^2)^2,\quad c\in \mathbb{R}_+,
\label{FreeEnergySmooth}
\end{equation}
where $\beta$ is the positive root of $\varphi^{\prime}(u)$. \\
If $\theta/\theta_c \rightarrow 0$ then $\varphi^{\prime\prime}$ tends to $-\theta_c$ and the spinodal interval tends to $\big(-1,1\big)$. It is suggested in this case by \cite{Oono1988} to use an obstacle potential free energy of the form 
\begin{equation}
\varphi(u)=
\left\lbrace
\begin{aligned}
&\dfrac{1}{2}(1-u^2), \quad |u|\leqslant 1,\\
&\infty,\quad |u|>1,
\end{aligned}
\right.
\end{equation}
which is formally the limit of the logarithmic free energy \eqref{FreeEnergyLog}-\eqref{F0function} as $\theta\rightarrow 0$, with $\theta_c=1$. It is shown in \cite{Elliott96} that weak solutions to \eqref{GenThinFilmEquation} and \eqref{BoundedMob} with the  free energy function \eqref{FreeEnergyLog}-\eqref{F0function} converge to weak solutions of \eqref{GenThinFilmEquation} and \eqref{BoundedMob} with the free energy function \eqref{F0function} as $\theta\rightarrow0$.

However, one difficulty in solving \eqref{ThinFilmSyst} with the logarithmic free energy \eqref{FreeEnergyLog} is that $\varphi^{\prime}$ presents singularities  at points  $u=\pm 1$ and thus, the second equation in \eqref{ThinFilmSyst} has no meaning at the limit $|u|\rightarrow 1$.  A finite element approximation for \eqref{ThinFilmSyst} with the mobility \eqref{BoundedMob} and the free energy \eqref{FreeEnergyLog} has been proposed in \cite{Barrett2} with the constraint on the solution $\vert u\vert<1$. Here, we propose a formulation of \eqref{ThinFilmSyst} with the mobility \eqref{BoundedMob} and the  energy \eqref{FreeEnergyLog} that has no singularity at the limit $|u|\rightarrow 1$. The equations \eqref{ThinFilmSyst} can be rewritten  as 
\begin{equation}
\begin{aligned}
&\partial_tu-\nabla\cdot\big(f(u)\nabla w\big)-\nabla\cdot\big(g(u)\nabla u\big)=0 \quad \text{in } \Omega_T,\\
&w=-\gamma\Delta u \quad \text{in } \Omega_T.
\end{aligned}
\label{ThinFilmSystMod}
\end{equation} 
With the  mobility function \eqref{BoundedMob} and  the  free energy function \eqref{FreeEnergyLog}, the function $g$ is given by
\begin{equation}
g(u)=f(u)\varphi^{\prime\prime}(u)=\theta+(1-u^2)\varphi_0^{\prime\prime}(u).
\label{GfunctionExpl}
\end{equation}
There is no more singularity at points $u=\pm 1$ in the formulation \eqref{ThinFilmSystMod}. Therefore, the numerical solutions could be computed at  $u=\pm 1$, which correspond to the pure component values.

\section{Numerical methods}
\label{SectionNumMethods}
The time interval $[0,T]$ is discretized as
\begin{equation}
\Delta t=\dfrac{T}{N},\quad t_n=n \Delta t,\quad  n=0,1,2,...,N,
\end{equation} 
where $\Delta t$ is the time step used. For the time discretization of \eqref{ThinFilmSyst}, we use a second-order semi-implicit method, so that $u^n\simeq u(t_n)$ for $n=1,...,N-1$, and 
\begin{eqnarray}
\begin{aligned}
&\dfrac{3u^{n+1}-4u^n+u^{n-1}}{2\Delta t}-\nabla\cdot\Big(\big(2f(u^n)-f(u^{n-1})\big)\nabla w^{n+1}\Big)=0,\\ 
&w^{n+1}+\gamma\Delta u^{n+1}-\big(\varphi^{\prime}(u^n)+\varphi^{\prime\prime}(u^n)(u^{n+1}-u^n)\big)=0.
\end{aligned}
\label{TimeDiscretisation}
\end{eqnarray}
To allow for a linear scheme in the temporal discretization \eqref{TimeDiscretisation} of the equations \eqref{ThinFilmSyst}, the time derivative $\partial_tu$ is approximated using a second-order backward differentiation formula, a second-order extrapolation formula is employed for the mobility function $f(u)$ while the energy $\varphi^{\prime}(u)$ is evaluated using a Taylor approximation with second-order truncation error. We refer to \cite{Keita2021} for more details on the numerical investigation of these techniques for fourth-order nonlinear diffusion equations.

Let $\mathcal{T}_h$ be a regular triangulation of $\Omega$ into disjoint triangles $\kappa$, with $h_{\kappa}:=diam(\kappa)$ and $h:=\max_{\kappa\in \mathcal{T}_h}h_{\kappa}$, so that 
\begin{equation}
\Omega=\bigcup_{\kappa\in\mathcal{T}_h}\kappa.
\end{equation}
Let $\mathbf{J}$ be the set of nodes of $\mathcal{T}_h$, with coordinates $\lbrace \bm{x}_j\rbrace_{j\in\mathbf{J}}$. Associated with  $\mathcal{T}_h$  are the finite element spaces 
\begin{equation}
{\mathcal{V}_h}=\Big\lbrace v_h\in \mathcal{C}^0(\Omega,\mathbb{R}): {v_h}_{|_{\kappa}}\text{ is linear }\forall \kappa\in \mathcal{T}_h \Big\rbrace
\label{Vhspace}
\end{equation}
and
\begin{equation}
{\mathcal{S}_h^a}=\Big\lbrace v_h\in {\mathcal{V}_h}: v_h(\bm{x}_j)\geqslant a,\quad\forall \bm{x}_j\in\mathcal{T}_h \Big\rbrace,
\label{Shspace}
\end{equation}
where $a\in\mathbb{R}$ is given. For any element $v_h\in\mathcal{V}_h$ and point $\bm{x}_j\in\mathcal{T}_h$, we denote $v_h(\bm{x}_j)$ by $v_{h,j}$.

We consider the following finite element approximation for \eqref{TimeDiscretisation}: Given a suitable approximation of the initial solution $u_h^{0}=\Pi_hu_0\in {\mathcal{S}}_h$ and a proper initialization for $u_h^1\in {\mathcal{S}}_h$, find $\big(\widehat{u}_h^{n+1},w_h^{n+1}\big)\in {\mathcal{V}}_h \times {\mathcal{V}}_h$ such that
\begin{eqnarray}
\begin{aligned}
&\int_{\Omega}\left(\dfrac{3\widehat{u}_h^{n+1}-4u_h^{n}+u_h^{n-1}}{2\Delta t}\right)v_h\,d\bm{x}+\int_{\Omega}[2f(u_h^{n})-f(u_h^{n-1})]\nabla w_h^{n+1}\cdot\nabla v_h\,d\bm{x}=0,\forall v_h \in {\mathcal{V}_h},\\
&\int_{\Omega} w_h^{n+1}q_h \,d\bm{x}-\int_{\Omega}\gamma\nabla \widehat{u}_h^{n+1}\cdot\nabla q_h\,d\bm{x}-\int_{\Omega}\big(\varphi^{\prime}(u_h^{n})+\varphi^{\prime\prime}(u_h^{n})(\widehat{u}_h^{n+1}-u_h^{n}) \big)q_h\,d\bm{x}=0,\forall q_h \in {\mathcal{V}_h}.
\end{aligned}
\label{VariationalEqualityScheme}
\end{eqnarray}
Then find $u_h^{n+1}\in{\mathcal{S}}_h$ such that 
\begin{equation*}
(\mathbf{P})\left\lbrace
\begin{aligned}
&u_h^{n+1}=\text{arg} \min_{u_h\in {\mathcal{V}_h}}F(u_h), \text{ where } F(u_h):=\dfrac{1}{2}\Vert u_h-\widehat{u}_h^{n+1} \Vert^2_{L^2(\Omega)},\\
&\text{ subject to }\quad c_1(u_h):=-(u_h-a)\leqslant 0,\quad\qquad\qquad(i) \\
&\qquad\qquad\qquad c_2(u_h):=\int_{\Omega}u_h\,d\bm{x}-\int_{\Omega}u_h^{0}\,d\bm{x}=0.\quad (ii) \qquad\qquad\qquad\qquad\qquad\qquad\qquad\qquad
\end{aligned}
\right.
\label{P}
\end{equation*}
$\Pi_h$ is an interpolation or projection operator on $\mathcal{V}_h$. The numerical algorithm \eqref{VariationalEqualityScheme} is a two steps scheme and therefore, it requires the use of a starting procedure to obtain $u_h^1$. Here, we use a semi-implicit backward Euler method
\begin{equation}
\begin{aligned}
&\int_{\Omega}\left(\dfrac{\widehat{u}_h^{1}-u_h^{0}}{\Delta t}\right)v_h\,d\bm{x}+\int_{\Omega}f(u_h^{0})\nabla w_h^{1}\cdot\nabla v_h\,d\bm{x}=0,\quad\forall v_h \in {\mathcal{V}_h},\\
&\int_{\Omega} w_h^{1}q_h \,d\bm{x}-\int_{\Omega}\gamma\nabla \widehat{u}_h^{1}\cdot\nabla q_h\,d\bm{x}-\int_{\Omega}\varphi'(u_h^{0}) q_h\,d\bm{x}=0, \quad\forall q_h \in {\mathcal{V}_h},\\
\end{aligned}
\label{VariationalEqualitySchemeInit}
\end{equation}
to compute $\widehat{u}_h^1$ from the initial solution $u_h^{0}$ and then find $u_h^1$ from $(\mathbf{P})$.

\subsection{Analysis of problem $(\mathbf{P})$}
The feasible set 
\begin{equation}
\mathcal{K}_h=\lbrace u_h\in\mathcal{V}_h: c_1(u_h)\leqslant 0\text{ and }c_2(u_h)=0 \rbrace\subset\mathcal{S}_h^a
\end{equation}
is a non-empty (since it contains the initial condition $u_h^{0}$) closed and convex subset of the Hilbert space $\mathcal{V}_h$ equipped with the $L^2$-inner product, noted
$\langle u_h,v_h\rangle$. The functional $F$  is Fr\'echet differentiable with derivative $\nabla F(u_h)=u_h-\widehat{u}_h^{n+1}$, which belongs to the finite element space $\mathcal{V}_h$ and is a continuous function of $u_h$. We also have
\begin{equation}
\begin{aligned}
\langle\nabla F(u_h)-\nabla F(v_h),u_h-v_h\rangle=\Vert u_h-v_h\Vert_{L^2(\Omega)}^2.
\end{aligned}
\end{equation}
Hence, $F$ is elliptic. By Theorem 8.4-1 in \cite{Ciarlet1982}, problem $(\mathbf{P})$ has an unique solution $u_h^{n+1}\in\mathcal{K}_h\subset\mathcal{S}_h^a$ characterized by
\begin{equation}
\langle \nabla F(u_h^{n+1}),v_h-u_h^{n+1}\rangle=\int_{\Omega}(u_h^{n+1}-\widehat{u}_h^{n+1})(v_h-u_h^{n+1})\,d\bm{x}\geqslant 0,\quad \forall v_h\in\mathcal{K}_h.
\end{equation}

Let the Lagrangian function of $(\mathbf{P})$ be
\begin{equation}
\mathcal{L}(u_h,\lambda_h,\mu)=F(u_h)+\langle\lambda_h,c_1(u_h)\rangle+\mu c_2(u_h),
\end{equation}
where $\lambda_h\in\mathcal{V}_h$ and $\mu\in\mathbb{R}$ are Lagrange multipliers.  $\mathcal{L}$ is Fr\'echet differentiable with respect to $u_h$ at the point $(u_h,\lambda_h,\mu)$ and its derivative is given by 
\begin{equation}
\dfrac{\partial \mathcal{L}}{\partial u_h}(u_h,\lambda_h,\mu)=u_h-\widehat{u}_h^{n+1}+\mu-\lambda_h,
\end{equation}
which also belongs to $\mathcal{V}_h$  and is a continuous function of $u_h$. The Karush-Kuhn-Tucker (KKT) necessary conditions for the problem $(\mathbf{P})$ state as: 
\begin{equation}
\begin{aligned}
&\dfrac{\partial \mathcal{L}}{\partial u_h}(u_h^{n+1},\lambda_h^*,\mu^*)=u_h^{n+1}-\widehat{u}_h^{n+1}+\mu^*-\lambda_h^*=0,\\
&\langle\lambda_h^*, c_1(u_h^{n+1})\rangle=-\int_{\Omega}\lambda_h^*(u_h^{n+1}-a)\,d\bm{x}=0,\\
& c_1(u_h^{n+1})=-(u_h^{n+1}-a)\leqslant 0,\quad c_2(u_h^{n+1})=\int_{\Omega}u_h^{n+1}\,d\bm{x}-\int_{\Omega}u_h^{0}\,d\bm{x}=0,\\
&\lambda_h^*\in\mathcal{S}_h^0,\quad \mu^*\in\mathbb{R}.
\end{aligned}
\label{KKTCond}
\end{equation}
The functional $F$ and the constraint functions $c_i$, $i=1,2$, are convex and differentiable at all points $u_h\in\mathcal{V}_h$. The existence of $u_h^{n+1}$, solution of $(\mathbf{P})$, implies the existence of $(u_h^{n+1}, \lambda_h^*,\mu^*)$, solution of the KKT conditions \eqref{KKTCond}, which conditions become sufficient since the functional $F$ is convex (see Theorems 9.2-3 and 9.2-4 in \cite{Ciarlet1982}).

An Uzawa algorithm for solving $(\mathbf{P})$ is given by:\\
Take $u_{h}^{n+1,0}\in \mathcal{S}_h^a$, $\lambda_{h}^0\in\mathcal{S}_h^0$ and $\mu^0\in\mathbb{R}$.\\
For $k=1,2,3,...,$ compute until convergence:
\begin{align}
&\lambda_{h}^k=\left[\lambda_{h}^{k-1}+\beta_kc_1(u_h^{n+1,k-1})\right]_+=\left[\lambda_{h}^{k-1}-\beta_k(u_h^{n+1,k-1}-a)\right]_+,\label{lambdak}\\
&\mu^k=\mu^{k-1}+\rho_kc_2(u_h^{n+1,k-1})=\mu^{k-1}+\rho_k\left(\int_{\Omega}u_h^{n+1,k-1}\,d\bm{x}-\int_{\Omega}u_h^{0}\,d\bm{x}\right),\label{muk}\\
&u_h^{n+1,k}=u_h^{n+1,k-1}-\alpha_k\dfrac{\partial \mathcal{L}}{\partial u_h}(u_h^{n+1,k-1},\lambda_h^k,\mu^k)=u_h^{n+1,k-1}-\alpha_k(u_h^{n+1,k-1}-\widehat{u}_h^{n+1}+\mu^k-\lambda_{h}^k),
\label{uk}
\end{align}
where $\beta_k$, $\rho_k$ and $\alpha_k$  are positive step sizes to be chosen, and $\left[\cdot\right]_+$ designs the positive part of the expression in the brackets, taken at each node $\bm{x}_j$ of the mesh. The stopping criteria is chosen as: 
\begin{equation}
\quad\Vert \lambda_h^k-\lambda_h^{k-1}\Vert^2_{L^2(\Omega)}\leqslant\epsilon,\quad\vert \mu^k-\mu^{k-1}\vert\leqslant\epsilon \quad\text{and}\quad\Vert u_h^{n+1,k-1}-u_h^{n+1,k-2} \Vert^2_{L^2(\Omega)}\leqslant\epsilon,
\label{toleranceUzawa}
\end{equation}
where $\epsilon>0$ is chosen small enough. The Uzawa method is a classical method for solving constrained optimization problems. It consists in replacing a constrained minimization problem by a sequence of unconstrained minimization problems \cite{Allaire2007,Ciarlet1989,Uzawa1958}.\\
If the algorithm \eqref{lambdak}-\eqref{toleranceUzawa} converges to $(u_h^\star,\lambda_h^\star,\mu^\star)$ as $\epsilon\rightarrow 0$ then $(u_h^\star,\lambda_h^\star,\mu^\star)$ satisfies the KKT conditions \eqref{KKTCond} and thus, $u_h^\star$ is solution of $(\mathbf{P})$. The uniqueness of the solution implies that $(u_h^\star,\lambda_h^\star,\mu^\star)=(u_h^{n+1},\lambda_h^{*},\mu^{*})$. This justifies the use of the above Uzawa algorithm to solve the optimization problem $(\mathbf{P})$. In the following, we assume that
\begin{align}
&\beta_k=\beta>0\quad\text{and}\quad\alpha_k=\alpha>0, \quad\forall k\geqslant 1.\label{Condalphabeta}\\
&\exists N_0=\ N_0(\epsilon)\in\mathbb{N}\quad s.t. \quad \forall k>N_0,\text{ }\eqref{toleranceUzawa}\text{ is satisfied.}\label{CondConv}
\end{align}
The existence of $ N_0$ follows from the convergence of the algorithm \eqref{lambdak}-\eqref{toleranceUzawa}.

Define $\overline{u}_h^{n+1,k}\in\mathcal{V}_h$ by
\begin{equation}
\overline{u}_h^{n+1,k}=\mathcal{P}_{\mathcal{S}_h^a}[u_h^{n+1,k-1}-\alpha(u_h^{n+1,k-1}-\widehat{u}_h^{n+1}+\mu^k)],\quad\forall k\geqslant1,
\label{ukProj}
\end{equation}
where the projection operator $\mathcal{P}_{\mathcal{S}_h^a}$ is defined at each node $\bm{x}_j$ as
\begin{equation}
\mathcal{P}_{\mathcal{S}_h^a}[u_{h,j}]=
\left\lbrace
\begin{aligned}
&u_{h,j},\text{ if } u_{h,j}\geqslant a,\\
&a,\text{ if } u_{h,j}<a,
\end{aligned}
\right.
\quad\forall j\in\mathbf{J}. 
\label{ProjOper}
\end{equation}
The aim in what follows is to show that if the solution obtained from the Uzawa algorithm \eqref{lambdak}-\eqref{toleranceUzawa} converges as $\epsilon\rightarrow 0$, then this limit solution is nothing but the limit of $\overline{u}_h^{n+1,k}$ as $k\rightarrow\infty$. This suggests that the solution from the Uzawa algorithm \eqref{lambdak}-\eqref{toleranceUzawa} can be replaced by the solution from the projection operator \eqref{ProjOper}.\\
In one hand, $\overline{u}_h^{n+1,k}$ can be written by using the definition of the positive part of a function as
\begin{equation}
\overline{u}_h^{n+1,k}=u_h^{n+1,k-1}-\alpha(u_h^{n+1,k-1}-\widehat{u}_h^{n+1}+\mu^k)+ \left[-\big(u_h^{n+1,k-1}-\alpha(u_h^{n+1,k-1}-\widehat{u}_h^{n+1}+\mu^k)-a\big)\right]_+.
\label{ukProjMod}
\end{equation}
In the other hand, equation \eqref{uk} implies that
\begin{equation}
u_h^{n+1,k}=u_h^{n+1,k-1}-\alpha(u_h^{n+1,k-1}-\widehat{u}_h^{n+1}+\mu^k)+\alpha\lambda_{h}^k,
\end{equation}
We next express the Lagrangian multiplier $\lambda_{h}^k$ in terms of $-\big(u_h^{n+1,k-1}-\alpha(u_h^{n+1,k-1}-\widehat{u}_h^{n+1}+\mu^k)-a\big)$. In fact, we have from \eqref{uk} that
\begin{equation}
u_h^{n+1,k-1}=u_h^{n+1,k-2}-\alpha(u_h^{n+1,k-2}-\widehat{u}_h^{n+1}+\mu^{k-1}-\lambda_{h}^{k-1}),\quad\forall k\geqslant 2.
\label{uk-1}
\end{equation}
Substituting \eqref{uk-1} into \eqref{lambdak}, we get
\begin{eqnarray}
\begin{aligned}
\lambda_{h}^{k}=\left[(1-\alpha\beta)\lambda_{h}^{k}+\theta_{h}^k\right]_+,\quad \forall k\geqslant2,
\end{aligned}
\label{lambdaMod2}
\end{eqnarray} 
where  
\begin{equation}
\tau_{h}^k=\lambda_{h}^k-\lambda_{h}^{k-1},\quad \xi^k=\mu^k-\mu^{k-1} \quad\text{and}\quad \eta_{h}^k=u_h^{n+1,k-1}-u_h^{n+1,k-2},\quad \forall k\geqslant1,
\label{epsilons}
\end{equation}
and
\begin{equation}
\theta_{h}^k=-\beta\big(u_h^{n+1,k-1}-\alpha(u_h^{n+1,k-1}-\widehat{u}_h^{n+1}+\mu^k)-a\big)+(\alpha\beta-1)\tau_{h}^k+\beta(1-\alpha)\eta_{h}^k-\alpha\beta\xi^k.
\label{theta}
\end{equation}
We can now show that
\begin{lemma}
For all $k\geqslant2$, we have
\begin{equation}
\lambda_{h}^k=\left[-\frac{1}{\alpha}\big(u_h^{n+1,k-1}-\alpha(u_h^{n+1,k-1}-\widehat{u}_h^{n+1}+\mu^k)-a\big)+(1-\frac{1}{\alpha\beta})\tau_{h}^k+(\frac{1}{\alpha}-1)\eta_{h}^k-\xi^k\right]_+.
\end{equation}
\label{Lemmalambdak}
\end{lemma}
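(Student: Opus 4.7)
The plan is to treat equation \eqref{lambdaMod2} as a nodewise fixed-point equation for $\lambda_h^k$ and solve it explicitly. Since $[\cdot]_+$ in \eqref{lambdak} and \eqref{lambdaMod2} is applied at each mesh node $\bm{x}_j\in\mathbf{J}$, the identity \eqref{lambdaMod2} reduces there to the scalar equation $\lambda_{h,j}^k=[(1-\alpha\beta)\lambda_{h,j}^k+\theta_{h,j}^k]_+$. The whole proof therefore boils down to solving $x=[(1-\alpha\beta)x+c]_+$ in $\mathbb{R}_{\geq 0}$ for the unknown $x=\lambda_{h,j}^k$ with datum $c=\theta_{h,j}^k$.

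The key step is an elementary case analysis that uses only $\alpha\beta>0$ from \eqref{Condalphabeta}. If $c\leq 0$, then $x=0$ solves the equation because $[(1-\alpha\beta)\cdot 0+c]_+=[c]_+=0$; a nonzero $x>0$ would force $x=c/(\alpha\beta)\leq 0$, a contradiction. If $c>0$, removing the bracket in $x=(1-\alpha\beta)x+c$ and rearranging gives the unique positive solution $x=c/(\alpha\beta)$, which is consistent with the positivity requirement. Both cases are captured by $x=\tfrac{1}{\alpha\beta}[c]_+$. Applying this at every node and reassembling the finite element function yields $\lambda_h^k=\tfrac{1}{\alpha\beta}[\theta_h^k]_+$.

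The remainder is algebraic bookkeeping: I would substitute the definition \eqref{theta} of $\theta_h^k$ and move the positive constant $\tfrac{1}{\alpha\beta}$ inside the bracket, which is legitimate nodewise precisely because $\alpha\beta>0$. Distributing term by term, the coefficient in front of $\bigl(u_h^{n+1,k-1}-\alpha(u_h^{n+1,k-1}-\widehat{u}_h^{n+1}+\mu^k)-a\bigr)$ collapses from $-\beta/(\alpha\beta)$ to $-\tfrac{1}{\alpha}$, the coefficient of $\tau_h^k$ becomes $\tfrac{\alpha\beta-1}{\alpha\beta}=1-\tfrac{1}{\alpha\beta}$, the coefficient of $\eta_h^k$ becomes $\tfrac{\beta(1-\alpha)}{\alpha\beta}=\tfrac{1}{\alpha}-1$, and the coefficient of $\xi^k$ becomes $-1$. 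This reproduces exactly the right-hand side of the lemma.

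I do not anticipate a serious obstacle; the statement is essentially an explicit inversion of the implicit update \eqref{lambdaMod2}. The only care required is to keep in mind that $[\cdot]_+$ is defined nodewise, so pulling $\tfrac{1}{\alpha\beta}>0$ inside the bracket is unambiguous, and that the scalar fixed-point argument above produces a \emph{unique} nonnegative root, so no spurious solutions are introduced when passing back from the nodal identities to the finite element identity in $\mathcal{V}_h$.
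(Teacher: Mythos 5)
Your proposal is correct and follows essentially the same route as the paper: both solve the implicit positive-part relation \eqref{lambdaMod2} to obtain $\lambda_h^k=\bigl[\theta_h^k/(\alpha\beta)\bigr]_+$ and then substitute \eqref{theta}, your nodewise case analysis on the sign of $\theta_{h,j}^k$ being just a slightly more explicit rendering of the paper's equivalence \eqref{Equiv} (and of the final algebraic substitution, which the paper leaves implicit).
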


\begin{proof}
We  have the equivalence
\begin{equation}
(1-\alpha\beta)\lambda_{h}^{k}+\theta_{h}^k \geqslant 0 \Longleftrightarrow \lambda_{h}^{k}=\dfrac{\theta_{h}^k}{\alpha\beta}\geqslant0.
\label{Equiv}
\end{equation}
In fact, if $(1-\alpha\beta)\lambda_{h}^{k}+\theta_{h}^k \geqslant 0$ then 
$\lambda_{h}^{k}=(1-\alpha\beta)\lambda_{h}^{k}+\theta_{h}^k\Longrightarrow\lambda_{h}^{k}=\dfrac{\theta_{h}^k}{\alpha\beta}\geqslant0.$\\
Conversely, if $\lambda_{h}^{k}=\dfrac{\theta_{h}^k}{\alpha\beta}\geqslant0$ then 
$(1-\alpha\beta)\lambda_{h}^{k}+\theta_{h}^k=(1-\alpha\beta)\dfrac{\theta_{h}^k}{\alpha\beta}+\theta_{h}^k=\dfrac{\theta_{h}^k}{\alpha\beta}\geqslant 0.$\\
From \eqref{lambdaMod2} and \eqref{Equiv}, we get
\begin{equation}
\lambda_{h}^{k}=\left[\dfrac{\theta_{h}^k}{\alpha\beta}\right]_+,\quad \forall k\geqslant2.
\label{lambdaMod4}
\end{equation}
This completes the proof.
\end{proof}
\noindent
The following inequality also holds:
\begin{lemma}
For all $p_h, r_h, s_h\in\mathcal{V}_h$ such that $p_{h}=\left[-r_{h}+s_{h}\right]_+-\left[-r_{h}\right]_+$ then
$\vert p_{h}\vert\leqslant \vert s_{h}\vert$.
\label{phbound}
\end{lemma}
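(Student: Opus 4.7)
The plan is to reduce the claim to a nodewise scalar inequality. By definition of the positive part operator $[\cdot]_+$ on $\mathcal{V}_h$ (inherited from \eqref{lambdak}, taken at each node), the identity $p_h = [-r_h + s_h]_+ - [-r_h]_+$ holds at each node $\bm{x}_j\in\mathbf{J}$:
\begin{equation*}
p_{h,j} = [-r_{h,j} + s_{h,j}]_+ - [-r_{h,j}]_+ .
\end{equation*}
Since elements of $\mathcal{V}_h$ are determined by their nodal values and the conclusion is understood nodewise, it is enough to establish $|p_{h,j}| \leqslant |s_{h,j}|$ for each $j$. Writing $a=-r_{h,j}$ and $b=s_{h,j}$, this reduces to the standard 1-Lipschitz property of the ramp function $t\mapsto [t]_+$, namely $|[a+b]_+-[a]_+|\leqslant |b|$ for all $a,b\in\mathbb{R}$.

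To prove the scalar inequality, I would proceed by case analysis on the signs of $a$ and $a+b$. When $a\geqslant 0$ and $a+b\geqslant 0$ the difference equals $b$; when $a<0$ and $a+b<0$ it equals $0$; when $a\geqslant 0$ and $a+b<0$ it equals $-a$, and the constraint $a+b<0\leqslant a$ forces $b<-a\leqslant 0$, hence $|-a|\leqslant |b|$; the remaining case $a<0\leqslant a+b$ is symmetric and yields $|a+b|\leqslant|b|$. In every case the bound $|[a+b]_+-[a]_+|\leqslant |b|$ follows.

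A cleaner alternative, avoiding the case split, uses the identity $[t]_+=\tfrac{1}{2}(t+|t|)$ to write
\begin{equation*}
[a+b]_+ - [a]_+ \;=\; \tfrac{1}{2}\bigl(b + |a+b| - |a|\bigr),
\end{equation*}
so that by the reverse triangle inequality $\bigl||a+b|-|a|\bigr|\leqslant |b|$,
\begin{equation*}
\bigl|[a+b]_+-[a]_+\bigr| \;\leqslant\; \tfrac{1}{2}\bigl(|b| + \bigl||a+b|-|a|\bigr|\bigr) \;\leqslant\; |b|.
\end{equation*}
Applied with $a=-r_{h,j}$ and $b=s_{h,j}$, this gives $|p_{h,j}|\leqslant |s_{h,j}|$ at every node and thus the conclusion. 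There is no real obstacle here; the only point to keep in mind is that, because $[\cdot]_+$ is defined nodewise on $\mathcal{V}_h$, both the hypothesis and the conclusion must be read nodewise.
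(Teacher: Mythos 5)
Your proposal is correct, and your ``cleaner alternative'' is precisely the paper's argument: writing $[t]_+=\tfrac{1}{2}(t+|t|)$ so that $p_h=\tfrac{1}{2}\big(|s_h-r_h|-|r_h|+s_h\big)$ and then applying the reverse triangle inequality, with the understanding that everything is read nodewise. The preliminary case analysis is a fine but redundant extra route; no gap to report.
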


\begin{proof}
\begin{equation}
p_{h}=\dfrac{1}{2}\Big(\vert s_{h}-r_{h}\vert+s_{h}-r_{h}-\big(\vert-r_{h}\vert-r_{h}\big)\Big)=\dfrac{1}{2}\Big(\vert s_{h}-r_{h}\vert-\vert r_{h}\vert+s_{h}\Big).
\end{equation}
From the reverse triangle inequality, we get $-\vert s_{h}\vert\leqslant\vert s_{h}-r_{h}\vert-\vert r_{h}\vert\leqslant\vert s_{h}\vert$
which implies that 
$-2\vert s_{h}\vert\leqslant \vert s_{h}-r_{h}\vert-\vert r_{h}\vert+s_{h}=2p_{h}\leqslant2\vert s_{h}\vert$. This completes the proof.
\end{proof}
\noindent
We now state and prove the main result:
\begin{theorem}
\label{TheoDiffSol}
There exist two positive constants $C_1$ and $C_2$ depending only on $\alpha$, $\beta$, $\rho_k$ and the domain $\Omega$ such that for all $k\geqslant N_0$, we have
\begin{align}
&\Vert u_h^{n+1,k}-\overline{u}_h^{n+1,k}\Vert_{L^2(\Omega)}\leqslant C_1\epsilon,\label{NormDiff}\\
&\Big\vert \int_{\Omega}\overline{u}_h^{n+1,k}\,d\bm{x}-\int_{\Omega}u_h^{0}\,d\bm{x}\Big\vert\leqslant C_2\epsilon.\label{NormMassDiff}
\end{align}
\end{theorem}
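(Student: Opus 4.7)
The plan is to exploit Lemmas \ref{Lemmalambdak} and \ref{phbound} to reduce the difference $u_h^{n+1,k}-\overline{u}_h^{n+1,k}$ to a linear combination of the Uzawa increments $\tau_h^k,\eta_h^k,\xi^k$, each of which is controlled by the stopping criterion \eqref{toleranceUzawa}. For the mass estimate, the second multiplier update \eqref{muk}, shifted by one index, will give the discrepancy between $\int_\Omega u_h^{n+1,k}$ and $\int_\Omega u_h^{0}$ directly in terms of $\xi^{k+1}$.

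First I would subtract \eqref{ukProjMod} from the form of $u_h^{n+1,k}$ implied by \eqref{uk}, which yields
\[
u_h^{n+1,k}-\overline{u}_h^{n+1,k}=\alpha\lambda_h^k-\bigl[-\bigl(u_h^{n+1,k-1}-\alpha(u_h^{n+1,k-1}-\widehat u_h^{n+1}+\mu^k)-a\bigr)\bigr]_+.
\]
Using Lemma \ref{Lemmalambdak} together with the identity $\alpha[f]_+=[\alpha f]_+$ (valid because $\alpha>0$), I would rewrite $\alpha\lambda_h^k=[-r_h+s_h^k]_+$ with $r_h:=u_h^{n+1,k-1}-\alpha(u_h^{n+1,k-1}-\widehat u_h^{n+1}+\mu^k)-a$ and
\[
s_h^k:=\bigl(\alpha-\tfrac{1}{\beta}\bigr)\tau_h^k+(1-\alpha)\eta_h^k-\alpha\xi^k.
\]
Then $u_h^{n+1,k}-\overline{u}_h^{n+1,k}$ has exactly the form $[-r_h+s_h^k]_+-[-r_h]_+$ covered by Lemma \ref{phbound}, giving the nodewise bound $|u_h^{n+1,k}-\overline{u}_h^{n+1,k}|\leq|s_h^k|$. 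Taking the $L^2$ norm and invoking \eqref{toleranceUzawa} (which holds for $k>N_0$ by \eqref{CondConv}) to bound $\Vert\tau_h^k\Vert_{L^2}$, $\Vert\eta_h^k\Vert_{L^2}$ and $|\xi^k|\,|\Omega|^{1/2}$, I would collect the multiplicative constants (depending only on $\alpha$, $\beta$, $|\Omega|$) into $C_1$ to obtain \eqref{NormDiff}.

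For \eqref{NormMassDiff} I would split
\[
\int_\Omega\overline{u}_h^{n+1,k}\,d\bm x-\int_\Omega u_h^{0}\,d\bm x=\int_\Omega(\overline{u}_h^{n+1,k}-u_h^{n+1,k})\,d\bm x+\Bigl(\int_\Omega u_h^{n+1,k}\,d\bm x-\int_\Omega u_h^{0}\,d\bm x\Bigr),
\]
bounding the first term via Cauchy--Schwarz and \eqref{NormDiff}. The second term is handled by advancing the multiplier update \eqref{muk} by one iteration: $\mu^{k+1}-\mu^k=\rho_{k+1}\bigl(\int_\Omega u_h^{n+1,k}\,d\bm x-\int_\Omega u_h^{0}\,d\bm x\bigr)$, so for $k\geq N_0$ the stopping criterion applied at iteration $k+1$ gives $|\xi^{k+1}|\leq\epsilon$ and the quotient by $\rho_{k+1}$ contributes to $C_2$.

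The main obstacle is the algebraic rewriting of $\alpha\lambda_h^k$ so that the ``base'' inside the positive part matches exactly the base appearing in \eqref{ukProjMod}, leaving only the small perturbation $s_h^k$ built from the Uzawa increments; this is where Lemma \ref{Lemmalambdak} is essential. A minor bookkeeping subtlety is that \eqref{toleranceUzawa} bounds $\Vert\tau_h^k\Vert_{L^2}^2$ and $\Vert\eta_h^k\Vert_{L^2}^2$ by $\epsilon$ while $|\xi^k|$ is bounded by $\epsilon$ directly, so the constants $C_1,C_2$ will reflect the dominant power of $\epsilon$ under the implicit assumption that the tolerance is sufficiently small.
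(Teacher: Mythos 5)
Your proposal follows essentially the same route as the paper: rewrite $\alpha\lambda_h^k$ via Lemma~\ref{Lemmalambdak} so that the difference $u_h^{n+1,k}-\overline{u}_h^{n+1,k}$ takes the form $[-r_h+s_h^k]_+-[-r_h]_+$ with $s_h^k=(\alpha-\tfrac{1}{\beta})\tau_h^k+(1-\alpha)\eta_h^k-\alpha\xi^k$, apply Lemma~\ref{phbound}, invoke the stopping criterion, and handle the mass estimate by splitting through $u_h^{n+1,k}$ and using the multiplier update \eqref{muk}. The only (correct and honest) deviation is your remark on the $\epsilon$ versus $\Vert\cdot\Vert^2\leqslant\epsilon$ bookkeeping in \eqref{toleranceUzawa}, a point the paper's own proof glosses over when passing from \eqref{NormDiff4} to \eqref{NormDiff5}.
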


\begin{proof}
Using Lemma \ref{Lemmalambdak}, equation \eqref{uk} can be written as
\begin{equation}
\begin{aligned}
u_h^{n+1,k}=&u_h^{n+1,k-1}-\alpha(u_h^{n+1,k-1}-\widehat{u}_h^{n+1}+\mu^k)\\
&+\left[-\big(u_h^{n+1,k-1}-\alpha(u_h^{n+1,k-1}-\widehat{u}_h^{n+1}+\mu^k)-a\big)+(\alpha-\frac{1}{\beta})\tau_{h}^k+(1-\alpha)\eta_{h}^k-\alpha\xi^k\right]_+.
\end{aligned}
\label{ukGeneral}
\end{equation}
Subtracting \eqref{ukProjMod} from \eqref{ukGeneral}, we obtain
\begin{equation}
\begin{aligned}
u_h^{n+1,k}-\overline{u}_h^{n+1,k}=&[-\big(u_h^{n+1,k-1}-\alpha(u_h^{n+1,k-1}-\widehat{u}_h^{n+1}+\mu^k)-a\big)+(\alpha-\frac{1}{\beta})\tau_{h}^k+(1-\alpha)\eta_{h}^k-\alpha\xi^k]_+\\
&-\left[-\big(u_h^{n+1,k-1}-\alpha(u_h^{n+1,k-1}-\widehat{u}_h^{n+1}+\mu^k)-a\big)\right]_+.
\end{aligned}
\end{equation}
Using Lemma~\ref{phbound}, we get
\begin{equation}
\begin{aligned}
\vert u_h^{n+1,k}-\overline{u}_h^{n+1,k}\vert\leqslant\vert(\alpha-\frac{1}{\beta})\tau_{h}^k+(1-\alpha)\eta_{h}^k-\alpha\xi^k\vert\leqslant\vert\alpha-\frac{1}{\beta}\vert\vert\tau_{h}^k\vert+\vert1-\alpha\vert\vert\eta_{h}^k\vert+\alpha\vert\xi^k\vert.
\end{aligned}
\label{NormDiff1}
\end{equation}
Taking the square in \eqref{NormDiff1},  integrating over $\Omega$ and using  Holder inequality yield 
\begin{equation}
\begin{aligned}
\Vert u_h^{n+1,k}-\overline{u}_h^{n+1,k}\Vert_{L^2(\Omega)}^2\leqslant& \vert\alpha-\frac{1}{\beta}\vert^2\Vert\tau_{h}^k\Vert_{L^2(\Omega)}^2+\vert1-\alpha\vert^2\Vert\eta_{h}^k\Vert_{L^2(\Omega)}^2\\
&+2\vert\alpha-\frac{1}{\beta}\vert\vert1-\alpha\vert\Vert\tau_{h}^k\Vert_{L^2(\Omega)}\Vert\eta_{h}^k\Vert_{L^2(\Omega)}+\alpha^2\vert\xi^k\vert^2\vert\Omega\vert\\
&+2\alpha\vert\xi^k\vert\Big(\vert\alpha-\frac{1}{\beta}\vert\Vert\tau_{h}^k\Vert_{L^2(\Omega)}\vert\Omega\vert^{1/2}+\vert1-\alpha\vert\Vert\eta_{h}^k\Vert_{L^2(\Omega)}\vert\Omega\vert^{1/2}\Big),
\end{aligned}
\label{NormDiff4}
\end{equation}
where $\vert\Omega\vert$ is the Lebesgue measure of the bounded domain $\Omega$. Combining \eqref{toleranceUzawa}, \eqref{CondConv} and \eqref{epsilons} gives rise to
\begin{equation}
\begin{aligned}
\Vert u_h^{n+1,k}-\overline{u}_h^{n+1,k}\Vert_{L^2(\Omega)}^2\leqslant \Big(\big(\vert\alpha-\frac{1}{\beta}\vert+\vert1-\alpha\vert\big)^2+\alpha^2\vert\Omega\vert+2\alpha\vert\Omega\vert^{1/2}\big(\vert\alpha-\frac{1}{\beta}\vert+\vert1-\alpha\vert\big)\Big)\epsilon^2,
\end{aligned}
\label{NormDiff5}
\end{equation}
for all $k>N_0$. Hence, \eqref{NormDiff} holds with 
\begin{equation}
C_1=\vert\alpha-\frac{1}{\beta}\vert+\vert1-\alpha\vert+\alpha\vert\Omega\vert^{1/2}.
\label{C1}
\end{equation}
We also have
\begin{equation}
\begin{aligned}
\Big\vert \int_{\Omega}\overline{u}_h^{n+1,k}\,d\bm{x}-\int_{\Omega}u_h^{0}\,d\bm{x}\Big\vert&=\Big\vert \int_{\Omega}(\overline{u}_h^{n+1,k}-u_h^{n+1,k}+u_h^{n+1,k})\,d\bm{x}-\int_{\Omega}u_h^{0}\,d\bm{x}\Big\vert\\
&\leqslant\left(C_1\vert\Omega\vert^{1/2}+\dfrac{1}{\rho_k}\right)\epsilon.
\end{aligned}
\end{equation}
Let $\rho_k$ be such that $0<\underline{\rho}<\rho_k<\overline{\rho}$. Then $\frac{1}{\rho_k}<\frac{1}{\underline{\rho}}$. Hence, \eqref{NormMassDiff} holds with $C_2=C_1\vert\Omega\vert^{1/2}+\frac{1}{\underline{\rho}}$.  This completes the proof.
\end{proof}
\begin{remark}
\textbf{1)} The Cauchy-Schwarz inequality could be applied to the inequality \eqref{NormDiff1}, but this leads to a constant $C_1=\sqrt{(\vert\alpha-\frac{1}{\beta}\vert^2+\vert1-\alpha\vert^2+\alpha^2)(2+\vert\Omega\vert})$ which is greater or equal to the one we found in \eqref{C1}.\\
\textbf{2)} The above analysis also holds for solutions $u_h\geqslant a$.\\
\textbf{3)} We have taken $a$ as constant for sake of simplicity. There is no difficulty in taking a function $a=a(\bm{x})$.
\end{remark}

\subsection{An Uzawa algorithm for \eqref{ThinFilmSyst}}
The Uzawa algorithm \eqref{lambdak}-\eqref{uk} is used to approximate the solution $u_h^{n+1}$ of the optimization problem $\textbf{(P)}$. Hence, an approximate solution $(u_h^{n+1},w_h^{n+1})$ at time $t_{n+1}$ to \eqref{ThinFilmSyst} could be obtained by using  the scheme, referred to as \textit{Scheme1}  and summarized in the following box: 
\begin{MyFrame}
\quad \textbf{\textit{Scheme1}: An Uzawa method for \eqref{ThinFilmSyst}}

\qquad Compute $\big(\widehat{u}_h^{n+1},w_h^{n+1}\big)$ with the FEM  \eqref{VariationalEqualityScheme}.

\qquad Then, for a given $\epsilon$, iterate \eqref{lambdak}-\eqref{uk} until \eqref{toleranceUzawa} is satisfied to get  $u_h^{n+1}$.
\end{MyFrame}

\subsection{Classical truncation method for \eqref{ThinFilmSyst}}
\label{SubSectionClassTrunctionMethod}
The truncation method was introduced in \cite{Berger1975} for the approximation of the solution of a parabolic problem with free boundary. Later, the method was used for solving parabolic variational inequalities \cite{Berger1977,Berger1976}. The  truncation method for \eqref{ThinFilmSyst} consists in using an appropriate method (here, we use the finite element approximation  \eqref{VariationalEqualityScheme}) to obtain the solution $\widehat{u}_h^{n+1}$ at time $t_{n+1}$ of \eqref{TimeDiscretisation}.  Then at each node point $\bm{x}_j$ of the mesh, the value of the approximate solution $u_h^{n+1}$ at time $t_{n+1}$  of \eqref{ThinFilmSyst} is obtained by a direct truncation.
This algorithm is referred to as \textit{Scheme2} and is summarized in the following box:
\begin{MyFrame}
\quad \textbf{\textit{Scheme2}: A classical truncation method for \eqref{ThinFilmSyst}}

\qquad Compute $\big(\widehat{u}_h^{n+1},w_h^{n+1}\big)$ with the algorithm  \eqref{VariationalEqualityScheme}.

\qquad Then, compute $u_h^{n+1}=\mathcal{P}_{\mathcal{S}_h^a}[\widehat{u}_h^{n+1}]$ at each nodal point $\bm{x}_j$ of the mesh.
\end{MyFrame}

If one removes the mass conservation constraint $(ii)$ in $(\mathbf{P})$, then a similar reasoning as in the proof of Theorem~\ref{TheoDiffSol} shows that
\begin{equation}
\Vert u_h^{n+1,k}-\overline{u}_h^{n+1,k}\Vert_{L^2(\Omega)}\leqslant\big(\vert\alpha-\frac{1}{\beta}\vert+\vert1-\alpha\vert\big)\epsilon, \quad \forall k\geqslant N_0.
\label{C1Simple}
\end{equation}
Taking the step sizes $\alpha=\beta=1$ in \eqref{C1Simple}, we get
\begin{equation}
u_h^{n+1,k}=\overline{u}_h^{n+1,k}=\mathcal{P}_{\mathcal{S}_h^a}[\widehat{u}_h^{n+1}],\quad\forall k>N_0.
\label{TruncationNonConss}
\end{equation}
Equality \eqref{TruncationNonConss} means that the solution of \textit{Scheme1} converges to the solution of \textit{Scheme2}  in a finite number of iterations. One drawback of the classical truncation method is that total mass is not preserved.

\subsection{A conservative truncation method for \eqref{ThinFilmSyst}}
\label{SubSectionConsTruncationMethod}
At the limit $\epsilon\rightarrow0$, Theorem~\ref{TheoDiffSol} implies that
\begin{align}
&u_h^{n+1}=\overline{u}_h^{n+1}=\mathcal{P}_{\mathcal{S}_h^a}[u_h^{n+1}-\alpha(u_h^{n+1}-\widehat{u}_h^{n+1}+\mu^*)]\quad a.e.,\label{TruncationCons1} \\
&\int_{\Omega}\mathcal{P}_{\mathcal{S}_h^a}[u_h^{n+1}-\alpha(u_h^{n+1}-\widehat{u}_h^{n+1}+\mu^*)]\,d\bm{x}=\int_{\Omega}u_h^{0}\,d\bm{x}. \label{TruncationCons2}
\end{align}
Equality \eqref{TruncationCons1} means that if convergence of the Uzawa algorithm \eqref{lambdak}-\eqref{toleranceUzawa}  occurs for $\alpha=1$, then the approximate solution $u_h^{n+1}$ at time $t_{n+1}$ of \eqref{ThinFilmSyst} computed with \textit{Scheme1} is given by
\begin{equation}
u_h^{n+1}=\mathcal{P}_{\mathcal{S}_h^a}[\widehat{u}_h^{n+1}-\mu^*],
\label{TruncationScheme}
\end{equation}
which is nothing else but a truncation of a translation of the solution $\widehat{u}_h^{n+1}$ computed with \eqref{VariationalEqualityScheme}.\\
Equality \eqref{TruncationCons2} means that the discrete total mass is preserved. The conservation of the discrete total mass is guaranteed by the translation of the solution $\widehat{u}_h^{n+1}$  by $\mu^*$. Hence, a truncation method for \eqref{ThinFilmSyst} that preserves the discrete total mass could be obtained by first finding $\mu^*$ such that 
\begin{equation}
\int_{\Omega}\mathcal{P}_{\mathcal{S}_h^a}[\widehat{u}_h^{n+1}-\mu^*]\,d\bm{x}=\int_{\Omega}u_h^{0}\,d\bm{x}
\label{muCons}
\end{equation}
and then truncating the translation of the solution $\widehat{u}_h^{n+1}$  by $\mu^*$. 

Define
\begin{equation}
\underline{\mu}:=\min_{j\in\mathbf{J}}\widehat{u}_h^{n+1}(\bm{x}_j)-a\quad\text{and}\quad\overline{\mu}:=\max_{j\in\mathbf{J}}\widehat{u}_h^{n+1}(\bm{x}_j)-a.
\end{equation}
We assume that $\underline{\mu}<0$. Otherwise, $\widehat{u}_h^{n+1}\in\mathcal{S}_h^a$ and thus, projection is not needed.  For all $j\in\mathbf{J}$, we have
\begin{align}
\widehat{u}_h^{n+1}(\bm{x}_j)-\mu\geqslant a,\quad\forall \mu\leqslant\underline{\mu},\\
\widehat{u}_h^{n+1}(\bm{x}_j)-\mu\leqslant a,\quad\forall \mu\geqslant\overline{\mu}.
\end{align}
Consider the nonlinear equation
\begin{equation}
\Theta(\mu):=\int_{\Omega}\mathcal{P}_{\mathcal{S}_h^a}[\widehat{u}_h^{n+1}-\mu]\,d\bm{x}-\int_{\Omega}u_h^{0}\,d\bm{x}=0.
\label{OptimalmuEqu}
\end{equation}
Let $\mu_1, \mu_2\in\mathbb{R}$. We have  
\begin{equation}
\begin{aligned}
\vert\Theta(\mu_1)-\Theta(\mu_2)\vert&\leqslant\int_{\Omega}\big\vert \mathcal{P}_{\mathcal{S}_h^a}[\widehat{u}_h^{n+1}-\mu_1]-\mathcal{P}_{\mathcal{S}_h^a}[\widehat{u}_h^{n+1}-\mu_2]\big\vert\,d\bm{x},\\
&\leqslant\int_{\Omega}\left\vert \frac{\vert \widehat{u}_h^{n+1}-\mu_1-a\vert+\widehat{u}_h^{n+1}-\mu_1+a}{2}-\frac{\vert \widehat{u}_h^{n+1}-\mu_2-a\vert+\widehat{u}_h^{n+1}-\mu_2+a}{2}\right\vert\,d\bm{x},\\
&\leqslant\vert\Omega\vert\vert \mu_1-\mu_2\vert.
\end{aligned}
\end{equation}
Hence, the function $\Theta$ is continuous. From the fact that \eqref{VariationalEqualityScheme} preserves the total mass \cite{Keita2021}, we also have
\begin{align}
&\Theta(\underline{\mu})=\int_{\Omega}(\widehat{u}_h^{n+1}-\underline{\mu})\,d\bm{x}-\int_{\Omega}u_h^{0}\,d\bm{x}=-\underline{\mu}\vert\Omega\vert>0,\\
&\Theta(\overline{\mu})=\int_{\Omega}a\,d\bm{x}-\int_{\Omega}u_h^{0}\,d\bm{x}=\int_{\Omega}(a-u_h^0)\,d\bm{x}\leqslant0.
\end{align}
By the Intermediate Value Theorem, there is $\mu^*\in[\underline{\mu},\overline{\mu}]$ such that $\Theta(\mu^*)=0$. 

For solving this equation, we use the secant method \cite{McNamee2013,Phillips1996}. The secant method is faster than the bisection method and does not require the derivative of $\Theta$. The algorithm of the secant method for approximating the unique solution of \eqref{OptimalmuEqu} in the interval $[\underline{\mu},\overline{u}]$ works as follows:

Take $\mu^0,\mu^1\in[\underline{\mu},\overline{u}]$.

For $k=1,2,3,...,$ compute until convergence:
\begin{equation}
\mu^{k+1}=\mu^k-\dfrac{\Theta(\mu^k)(\mu^k-\mu^{k-1})}{\Theta(\mu^k)-\Theta(\mu^{k-1})}.
\label{SecantAlgorithm}
\end{equation} 
The stopping criteria is chosen as
\begin{equation}
\vert\mu^{k+1}-\mu^k\vert\leqslant\epsilon.
\label{toleranceSecant}
\end{equation}

The resulting numerical scheme is referred to as \textit{Scheme3} and is summarized in the following box:
\begin{MyFrame}
\quad \textbf{\textit{Scheme3}: A conservative truncation method for \eqref{ThinFilmSyst}}

\qquad Compute $\big(\widehat{u}_h^{n+1},w_h^{n+1}\big)$ with the algorithm  \eqref{VariationalEqualityScheme}.

\qquad Then, for a given $\epsilon$, iterate \eqref{SecantAlgorithm} until \eqref{toleranceSecant} is satisfied to get $\mu^*$.

\qquad Finally, compute $u_h^{n+1}=\mathcal{P}_{\mathcal{S}_h^a}[\widehat{u}_h^{n+1}-\mu^*]$ at each node $\bm{x}_j$ of the mesh.
\end{MyFrame}

\subsection{A second-order time accurate version of Barrett et al.\ scheme for \eqref{ThinFilmEquation}}
Recall that  Barrett et al.\ \cite{Barrett1} proposed a first-order time stepping finite element method for \eqref{ThinFilmEquation} that preserves the positivity of the computed solution by solving a variational inequality. For purpose of comparison, we consider the  version of their scheme that corresponds to a second-order approximation in time as in the scheme \eqref{VariationalEqualityScheme}. The scheme is written as follows: Find $(u_h^{n+1,k},w_h^{n+1,k},r_h^{n+1,k})\in {\mathcal{V}_h}\times {\mathcal{V}_h}\times {\mathcal{V}_h}$ such that
\begin{equation}
\begin{aligned}
&\int_{\Omega}\left(\dfrac{3u_h^{n+1,k}-4u_h^{n}+u_h^{n-1}}{2\Delta t}\right)v_h\,d\bm{x}+f_{max}^n\int_{\Omega}\nabla w_h^{n+1,k}\cdot\nabla v_h\,d\bm{x}\\
&\qquad -\int_{\Omega}[f_{max}^n-(2f(u_h^n)-f(u_h^{n-1}))]\nabla w_h^{n+1,k-1}\cdot\nabla v_h\,d\bm{x}=0,\quad\forall v_h \in {\mathcal{V}_h},\\
&\int_{\Omega} w_h^{n+1,k}q_h\,d\bm{x}+\int_{\Omega} r_h^{n+1,k-1}q_h \,d\bm{x}-\int_{\Omega}\gamma\nabla u_h^{n+1,k}\cdot\nabla q_h\,d\bm{x}=0, \quad\forall q_h \in {\mathcal{V}_h},\\
&r_h^{n+1,k}=\left[r_h^{n+1,k-1}-\varrho u_h^{n+1,k}\right]_+,
\end{aligned}
\label{BarrettScheme}
\end{equation}
where $f_{max}^n:=\Vert 2f(u_h^{n})-f(u_h^{n-1})\Vert_{\infty}$ and $\varrho>0$ is a constant. The stopping criteria for the iterative procedure is chosen as:
\begin{equation}
\Vert u_h^{n+1,k}-u_h^{n+1,k-1}\Vert_{L^2(\Omega)}\leqslant \epsilon.
\label{toleranceBarrett}
\end{equation}
We use their first-order scheme to compute the other starting value $u_h^1$ from the initial condition $u_h^0$. The scheme \eqref{BarrettScheme}-\eqref{toleranceBarrett} is referred to as \textit{Scheme4} and is summarized in the following box:
\begin{MyFrame}
\quad \textbf{\textit{Scheme4}: A second-order semi implicit version of Barrett et al.\ scheme for \eqref{ThinFilmEquation}}

\qquad For a given $\epsilon$,  iterate \eqref{BarrettScheme} until \eqref{toleranceBarrett} is satisfied to get the approximate solution.
\end{MyFrame}

\section{Numerical results}
\label{SectNumResults}
A series of numerical problems in two dimensions is given in this section to test the performance of the different schemes. All the algorithms for the simulations are implemented using FreeFem++ software \cite{FreeFem,FreeFemm}. We use the direct solver  Unsymmetric MultiFrontal method (UMFPACK) available in FreeFem++. All simulations are done on a Dell XPS 13 with Intel Core i7-6500U CPU 2.50 GHz processors  and 16GB of RAM running under linux.

\subsection{Self-similar solution}
\label{SubSectSelfSimSol}
For $\gamma=1$ and $f(u)=u$, equation \eqref{ThinFilmEquation} has a positive closed-form compactly supported, $2$-dimensional self-similar solution \cite{Ferreira1997} given by
\begin{equation}
u(r,t)=
\left\lbrace
\begin{aligned}
&\dfrac{t^{-1/3}}{192}(L^2-r^2)^2,\quad r<L,\\
&0,\quad r\geqslant L,
\end{aligned}
\right.
\label{ExactSol1}
\end{equation}
where $r=r(\bm{x},t):=\vert \bm{x}\vert /t^{1/6}$. Following the mixed formulation \eqref{ThinFilmSyst}, we calculate the auxiliary variable
\begin{equation}
w(r,t)=
\left\lbrace
\begin{aligned}
&\dfrac{t^{-2/3}}{24}(L^2-2r^2),\quad r<L,\\
&0,\quad r\geqslant L.
\end{aligned}
\right.
\label{ExactSol2}
\end{equation}

In our numerical investigation of the error and convergence analysis in space of the different numerical schemes discussed in section \ref{SectionNumMethods}, we compute the error between the exact solution \eqref{ExactSol1}-\eqref{ExactSol2} and the numerical ones. We choose \eqref{ExactSol1} at time $t=0.001$ as initial condition  for the regularity of the exact solution \eqref{ExactSol1}-\eqref{ExactSol2}.  We set $a=0$ in \eqref{Shspace} to impose the positivity of the discrete solution. The computations are conducted in the unit square centered at the origin. We take  $L=1$, a time step $\Delta t= 10^{-6}$, a final computational time $T=0.0012$ and a tolerance $\epsilon=10^{-10}$ for the stopping criteria \eqref{toleranceUzawa}, \eqref{toleranceSecant} and \eqref{toleranceBarrett}. The Dirichlet boundary conditions $u_h=\Pi_hu$ and $w_h=\Pi_hw$ are used. 

Tables~\ref{tab:ErrorUzawa}, \ref{tab:ErrorClassicalTruncation}, \ref{tab:ErrorConsTruncation} and \ref{tab:ErrorBarrett} show the order of convergence, computational time, $L^2$- and $H^1$-error for different grids for each of the schemes, respectively. We can see that all the schemes have the same order of convergence $\mathcal{O}(h^{1})$ for the $H^1$-error on the variable $u_h$. For the $L^2$-error on the variable $u_h$, a convergence  in  $\mathcal{O}(h^{2})$ is obtained with \textit{Scheme4} while a convergence in $\mathcal{O}(h^{1.2})$ is obtained for the three other schemes which produce almost the same error. The auxiliary variable \eqref{ExactSol2} calculated from the self-similar solution \eqref{ExactSol1} is not continuous at the points $r=L$. Hence, the $H^1$-error on $w$ is not defined. The reduce of the order of the convergence of the auxiliary variable $w_h$ is due to its lack of regularity. 

We have noticed from numerical computations that the choice $\alpha_k=\beta_k=\rho_k=1$ always leads to the convergence of the Uzawa algorithm \eqref{lambdak}-\eqref{uk} with the smallest number of iterations. For this test case, we also noticed that values for the parameter $\varrho$ around $3500$ lead to the convergence of \textit{Scheme4} with the smallest number of iterations. Table~\ref{tab:NBIterDiffSchemes} shows the parameters and the average number of iterations for the convergence of each iterative algorithm over time steps. Note that for \textit{Scheme4} and this test case, it takes about 600-700 iterations for the first steps to converge but the number of iterations  decreases as time evolves. Note also that the number of iterations appears to be independent from the mesh size and the chosen time step for \textit{Scheme1} and \textit{Scheme3}. Our numerical investigations also show that the secant iteration \eqref{SecantAlgorithm} always converges to a same value. In terms of computational time, \textit{Scheme2} is slightly faster than \textit{Scheme3} which is bit faster than \textit{Scheme1} which is significantly faster than \textit{Scheme4}. As opposed to the other three schemes, no iterative solver is needed in \textit{Scheme2}, which makes it the winner in terms of computational time. We also have noticed in practice that the iterative methods in \textit{Scheme1} and \textit{Scheme3} converge  more easily than \textit{Scheme4}.

\begin{table}[!h]
\centering
\begin{scriptsize}
\begin{tabular}{l|c|c|c|c|c|c|c|c|c|l|}
\cline{2-7}
&\multicolumn{4}{|c|}{$L^2$-error and order of convergence } & \multicolumn{2}{|c|}{$H^1$-error and order of convergence}  \\ \cline{2-7}
&\multicolumn{2}{|c|}{Error}&\multicolumn{2}{|c|}{Order} &\multicolumn{1}{|c|}{Error}&\multicolumn{1}{|c|}{Order} \\ \cline{1-5}\cline{8-8}
\multicolumn{1}{ |c| }{Mesh} &on $u_h$ & on $w_h$ &for $u_h$ &for $w_h$&on $u_h$&for $u_h$& CPU(s)\\ \cline{1-8}
\multicolumn{1}{ |c| }{$25\times 25$}&$10.8964\times 10^{-5}$&$0.1939$&$1.33$&$0.02$&$14.6660\times 10^{-3}$&$0.92$&$023$ \\ \cline{1-8}
\multicolumn{1}{ |c| }{$50\times 50$}&$4.31507\times 10^{-5}$&$0.1913$&$1.27$&$0.28$&$7.70985\times 10^{-3}$&$0.97$&$069$\\ \cline{1-8}
\multicolumn{1}{ |c| }{$100\times 100$}&$1.77780\times 10^{-5}$&$0.1569$&$1.21$&$0.34$&$3.92392\times 10^{-3}$&$0.97$&$218$ \\ \cline{1-8}
\multicolumn{1}{ |c| }{$200\times 200$}&$0.76670\times 10^{-5}$&$0.1237$&---&---&$1.99102\times 10^{-3}$&---&$753$\\ \cline{1-8}
\end{tabular}
\caption{Exact solution: Errors, order of convergence and CPU time for the solution computed with \textit{Scheme1}. $\alpha_k=1.0$, $\beta_k=1.0$ and $\rho_k=1.0$.}
\label{tab:ErrorUzawa}
\end{scriptsize}
\end{table}

\begin{table}[!h]
\centering
\begin{scriptsize}
\begin{tabular}{l|c|c|c|c|c|c|c|c|c|l|}
\cline{2-7}
&\multicolumn{4}{|c|}{$L^2$-error and order of convergence } & \multicolumn{2}{|c|}{$H^1$-error and order of convergence}  \\ \cline{2-7}
&\multicolumn{2}{|c|}{Error}&\multicolumn{2}{|c|}{Order} &\multicolumn{1}{|c|}{Error}&\multicolumn{1}{|c|}{Order} \\ \cline{1-5}\cline{8-8}
\multicolumn{1}{ |c| }{Mesh} &on $u_h$ & on $w_h$ &for $u_h$ &for $w_h$&on $u_h$ &for $u_h$ & CPU(s)\\ \cline{1-8}
\multicolumn{1}{ |c| }{$25\times 25$}&$11.1658\times 10^{-5}$&$0.2333$&$1.32$&$0.25$&$14.6717\times 10^{-3}$&$0.92$&$016$ \\ \cline{1-8}
\multicolumn{1}{ |c| }{$50\times 50$}&$4.46522\times 10^{-5}$&$0.1956$&$1.24$&$0.25$&$7.72307\times 10^{-3}$&$0.97$&$031$\\ \cline{1-8}
\multicolumn{1}{ |c| }{$100\times 100$}&$1.88082\times 10^{-5}$&$0.1635$&$1.19$&$0.35$&$3.93529\times 10^{-3}$&$0.97$&$125$ \\ \cline{1-8}
\multicolumn{1}{ |c| }{$200\times 200$}&$0.82037\times 10^{-5}$&$0.1280$&---&---&$1.99845\times 10^{-3}$&---&$612$\\ \cline{1-8}
\end{tabular}
\caption{Exact solution: Errors, order of convergence and CPU time for the solution computed with \textit{Scheme2}.}
\label{tab:ErrorClassicalTruncation}
\end{scriptsize}
\end{table}

\begin{table}[!h]
\centering
\begin{scriptsize}
\begin{tabular}{l|c|c|c|c|c|c|c|c|c|l|}
\cline{2-7}
&\multicolumn{4}{|c|}{$L^2$-error and order of convergence } & \multicolumn{2}{|c|}{$H^1$-error and order of convergence}  \\ \cline{2-7}
&\multicolumn{2}{|c|}{Error}&\multicolumn{2}{|c|}{Order} &\multicolumn{1}{|c|}{Error}&\multicolumn{1}{|c|}{Order} \\ \cline{1-5}\cline{8-8}
\multicolumn{1}{ |c| }{Mesh} &on $u_h$ & on $w_h$ &for $u_h$ &for $w_h$&on $u_h$ &for $u_h$ & CPU(s)\\ \cline{1-8}
\multicolumn{1}{ |c| }{$25\times 25$}&$10.8964\times 10^{-5}$&$0.1939$&$1.33$&$0.02$&$14.6660\times 10^{-3}$&$0.92$&$017$ \\ \cline{1-8}
\multicolumn{1}{ |c| }{$50\times 50$}&$4.31506\times 10^{-5}$&$0.1913$&$1.27$&$0.28$&$7.70985\times 10^{-3}$&$0.97$&$049$\\ \cline{1-8}
\multicolumn{1}{ |c| }{$100\times 100$}&$1.77780\times 10^{-5}$&$0.1569$&$1.21$&$0.34$&$3.92392\times 10^{-3}$&$0.97$&$180$ \\ \cline{1-8}
\multicolumn{1}{ |c| }{$200\times 200$}&$0.76670\times 10^{-5}$&$0.1237$&---&---&$1.99102\times 10^{-3}$&---&$635$\\ \cline{1-8}
\end{tabular}
\caption{Exact solution: Errors, order of convergence and CPU time for the solution computed with \textit{Scheme3}.}
\label{tab:ErrorConsTruncation}
\end{scriptsize}
\end{table}

\begin{table}[!h]
\centering
\begin{scriptsize}
\begin{tabular}{l|c|c|c|c|c|c|c|c|c|l|}
\cline{2-7}
&\multicolumn{4}{|c|}{$L^2$-error and order of convergence } & \multicolumn{2}{|c|}{$H^1$-error and order of convergence}  \\ \cline{2-7}
&\multicolumn{2}{|c|}{Error}&\multicolumn{2}{|c|}{Order} &\multicolumn{1}{|c|}{Error}&\multicolumn{1}{|c|}{Order} \\ \cline{1-5}\cline{8-8}
\multicolumn{1}{ |c| }{Mesh} &on $u_h$ & on $w_h$ &for $u_h$ &for $w_h$&on $u_h$&for $u_h$& CPU(s)\\ \cline{1-8}
\multicolumn{1}{ |c| }{$25\times 25$}&$3.59243\times 10^{-5}$&$1.7411$&$1.46$&$\times$&$14.9739\times 10^{-3}$&$0.95$&$00260$ \\ \cline{1-8}
\multicolumn{1}{ |c| }{$50\times 50$}&$1.30124\times 10^{-5}$&$0.8716$&$2.02$&$\times$&$7.73444\times 10^{-3}$&$0.98$&$01325$\\ \cline{1-8}
\multicolumn{1}{ |c| }{$100\times 100$}&$0.31919\times 10^{-5}$&$1.0040$&$1.97$&$\times$&$3.91320\times 10^{-3}$&$0.98$&$06605$ \\ \cline{1-8}
\multicolumn{1}{ |c|}{$200\times 200$}&$0.08126\times 10^{-5}$&$0.9427$&---&---&$1.97163\times 10^{-3}$&---&$34556$\\ \cline{1-8}
\end{tabular}
\caption{Exact solution: Errors, order of convergence and CPU time for the solution computed with \textit{Scheme4}. $\varrho=3500$.}
\label{tab:ErrorBarrett}
\end{scriptsize}
\end{table}

\begin{table}[!h]
\centering
\begin{tabular}{|c|c|c|}
\cline{1-3}
Methods & Parameters & Number of iterations  \\ \cline{1-3}
Uzawa iteration \eqref{lambdak}-\eqref{uk}  &  $\alpha_k=1$, $\beta_k=1$, $\rho_k=1$ &3-4\\ \cline{1-3}
Secant iteration \eqref{SecantAlgorithm}  & NA & 2-3   \\ \cline{1-3}
\textit{Scheme4}  & $\varrho=3500$ & 60-70\\ \cline{1-3}
\end{tabular}
\caption{ Parameters and average number of iterations for the convergence of the iterative algorithms.}
\label{tab:NBIterDiffSchemes}
\end{table}

\subsubsection{Conservation of mass}
Note that for $L=1$, the solution on the boundary of the computational domain is null for all $t\in[0.001,0.0012]$. Hence, the natural boundary conditions \eqref{HomoNeumannBoundCond} are satisfied and thus, \eqref{MassConservation} should  be satisfied by the numerical solutions. To numerically check the conservation of mass, we compute the relative error 
\begin{equation}
E_h(u_h^n):=\left\vert\dfrac{\int_{\Omega}u_h^n\,d\bm{x}-\int_{\Omega}u_0\,d\bm{x}}{\int_{\Omega}u_0\,d\bm{x}}\right\vert
\label{RelativeErrorMassCons}
\end{equation}
on the total mass computed at each time step for different meshes with each numerical scheme. The numerical results are displayed in Figure~\ref{Fig:MassConsSelfSimSol}.  We can see that the error \eqref{RelativeErrorMassCons} produced by each scheme converges to zero as the mesh is refined. \textit{Scheme1}, \textit{Scheme3} and \textit{Scheme4} perfectly preserve the discrete total mass. However, there is a defect of conservation of  mass with \textit{Scheme2} but the error decreases with the refinement of the mesh. 
\begin{figure}[!h]
\centering
\begin{tabular}{cc}
\includegraphics[scale=0.5]{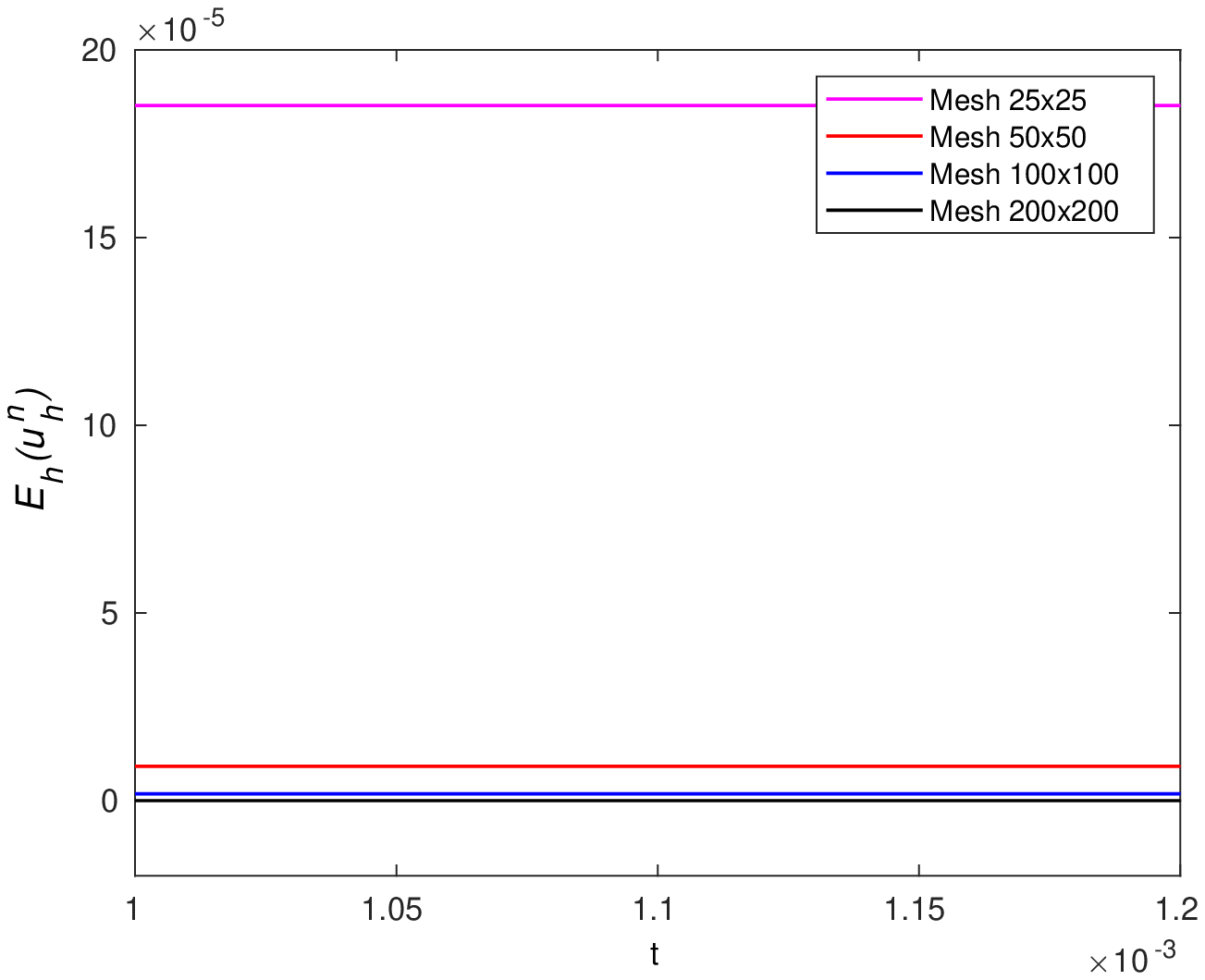}& \includegraphics[scale=0.5]{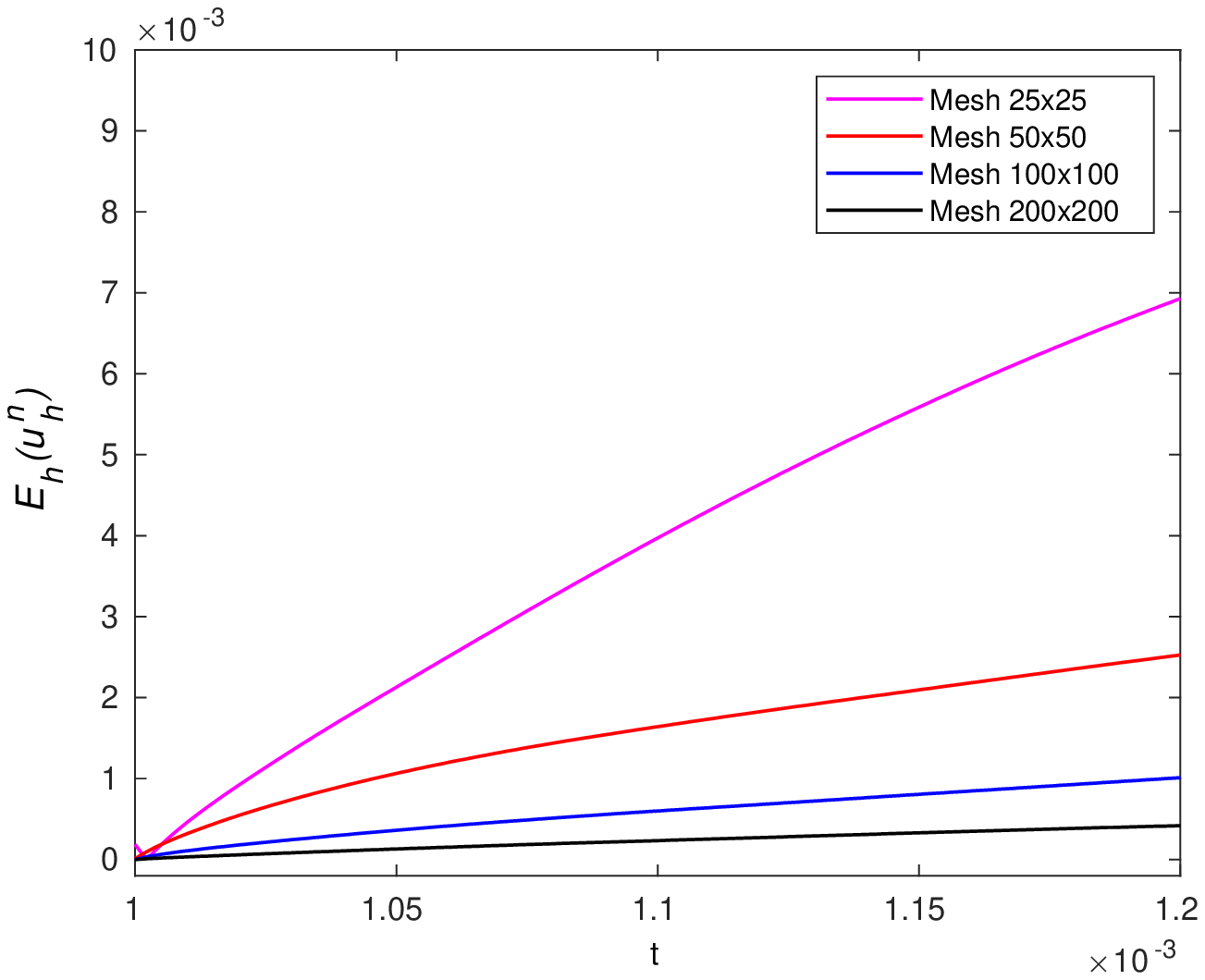}\\
\textit{Scheme1} & \textit{Scheme2}\\
\includegraphics[scale=0.5]{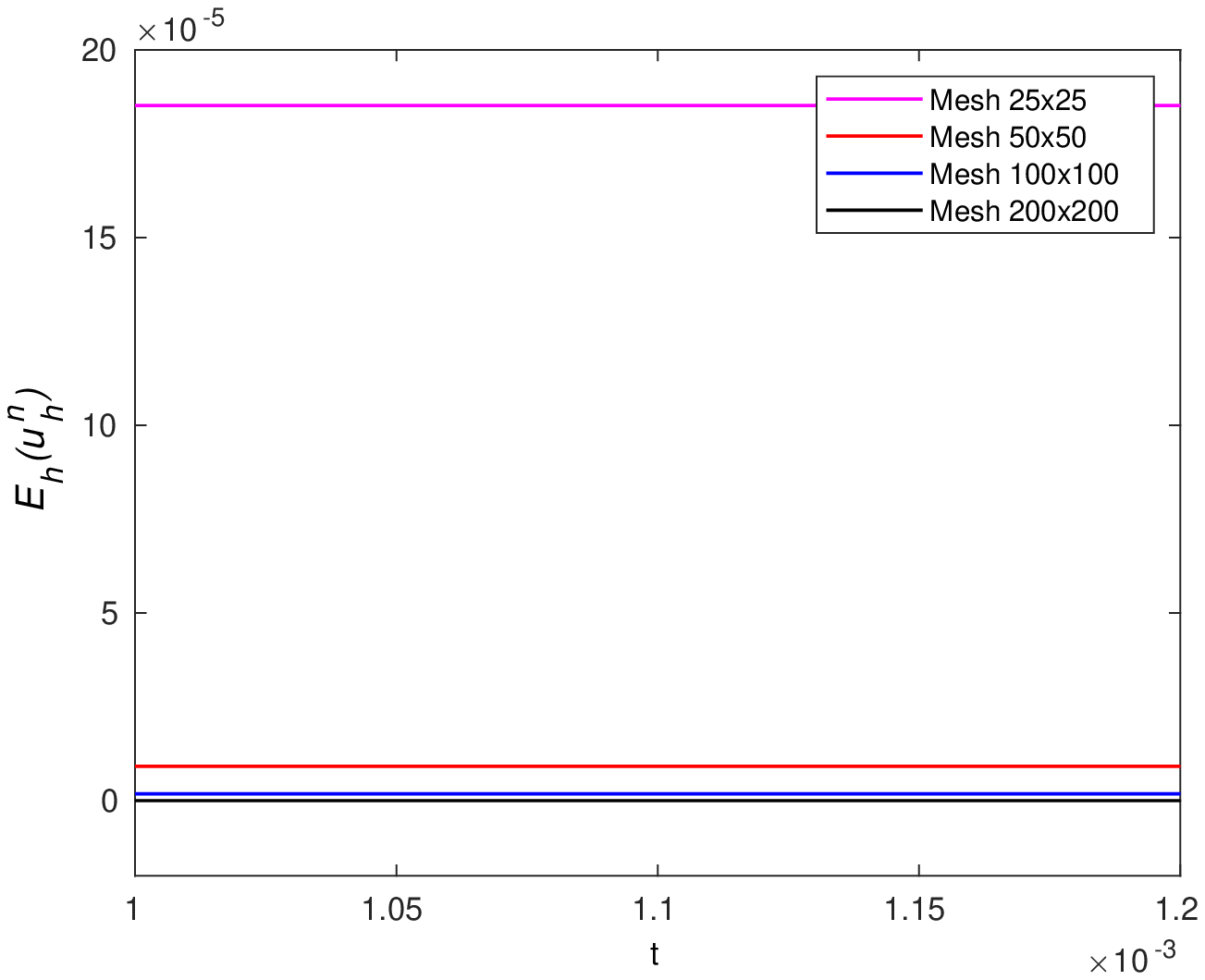}&\includegraphics[scale=0.5]{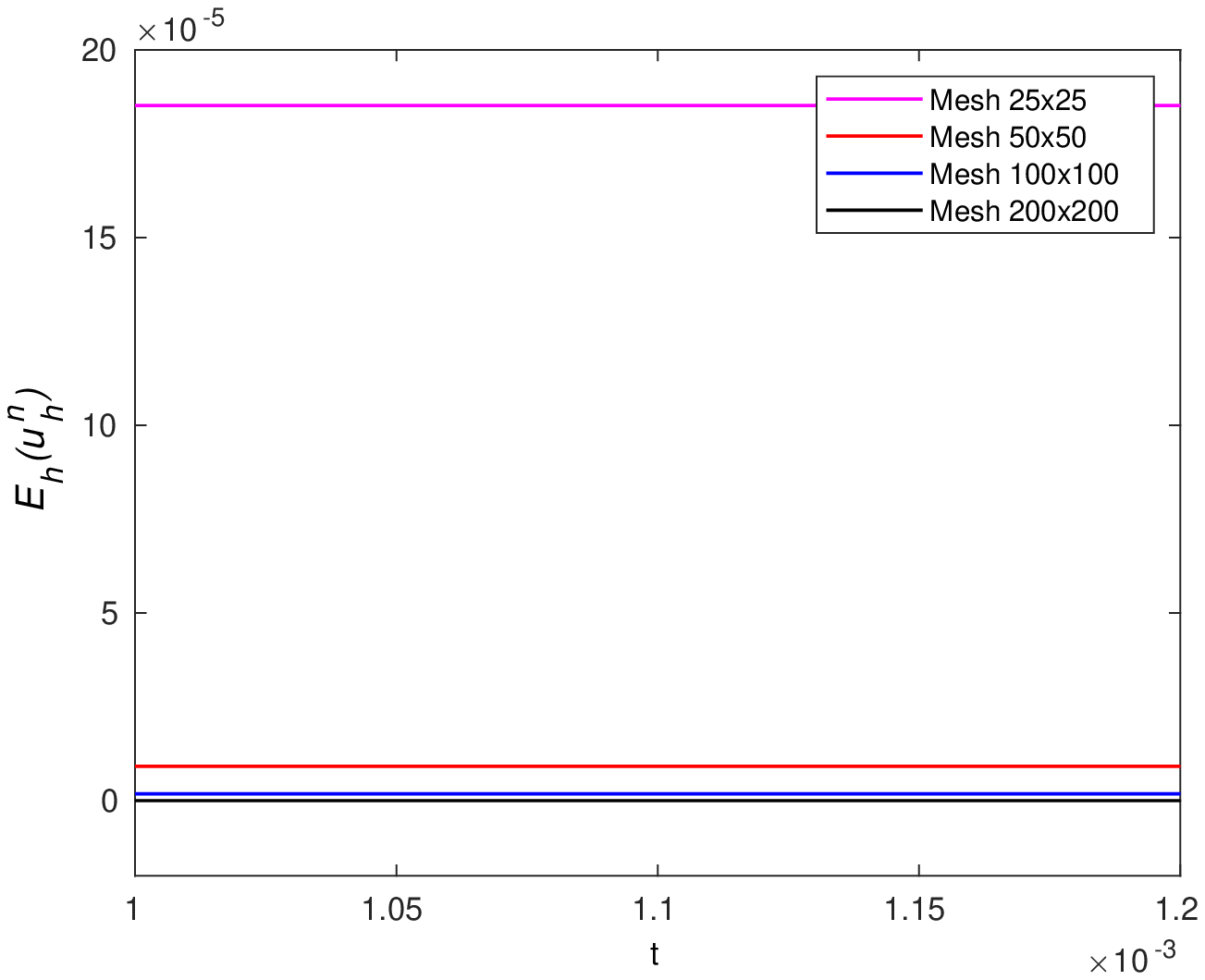}\\
\textit{Scheme3} & \textit{Scheme4} 
\end{tabular}
\caption{Relative error on the discrete total mass on different grids for each scheme.}
\label{Fig:MassConsSelfSimSol}
\end{figure}

\textit{Scheme3} has some important advantages with respect to the other three schemes. In fact, it is almost as fast and accurate as \textit{Scheme2} but better preserves the discrete mass than the latter. In addition to being faster and conservative, positivity of the computed solution is always satisfied as opposed to \textit{Scheme1} and \textit{Scheme4}, for which the positivity of the computed solution is only guaranteed at the limit $\epsilon\rightarrow0$, a condition that one cannot reach in practice.

\subsubsection{Free boundary propagation}
Here, we test the robustness of \textit{Scheme3} for tracking the free boundary (interface position) using the exact solution \eqref{ExactSol1}. The interface position corresponds to the boundary of the region where the solution $u$ is strictly positive. For this analysis, we consider the solution computed with the mesh $200\times200$ and compute the contour $u_h=\delta_{u_h}$, $0<\delta_{u_h}\ll1$ of the numerical solution at different times. Since the exact solution $u$ vanishes in the whole region $r\geqslant 1$, the interface position of the solution is approximated as the contour $u=10^{-12}$. Figure~\ref{Fig:ContoursDiffTimesSelfSimSol} shows the contours of the exact and numerical solutions at different times.  For the contours of the numerical solution, the value $\delta_{u_h}=10^{-5}$ has been chosen since the magnitude of the $L^2$-error between the exact and numerical solution for the mesh $200\times200$ is around $10^{-5}$ as shown in Table~\ref{tab:ErrorConsTruncation}. Note also that for this solution, the maximum occurs at the origin for all time $t$ and is given by $\frac{t^{-1/3}L^4}{192}$ which is far from $10^{-5}$  in the time interval $[0.001,0.02]$, confirming that $\delta_{u_h}=10^{-5}$ is a reasonable choice. We observe a good agreement between the exact and numerical interface positions.
\begin{figure}[!h]
\centering
\begin{tabular}{cccc}
\includegraphics[scale=0.2]{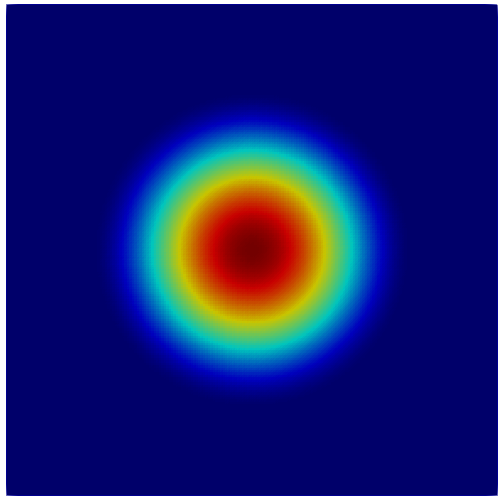}&\includegraphics[scale=0.2]{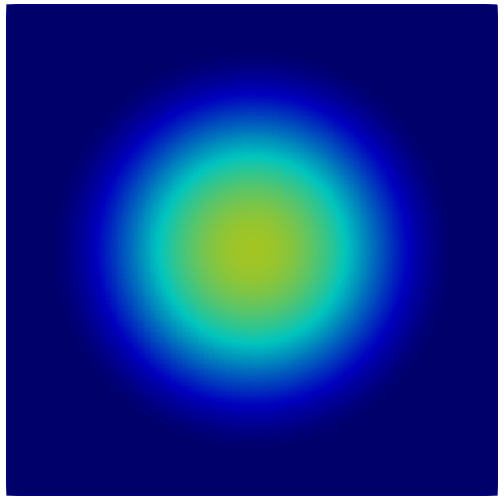}&\includegraphics[scale=0.2]{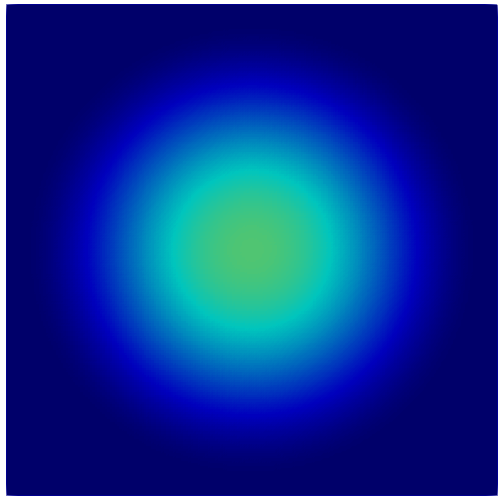}&\includegraphics[scale=0.2]{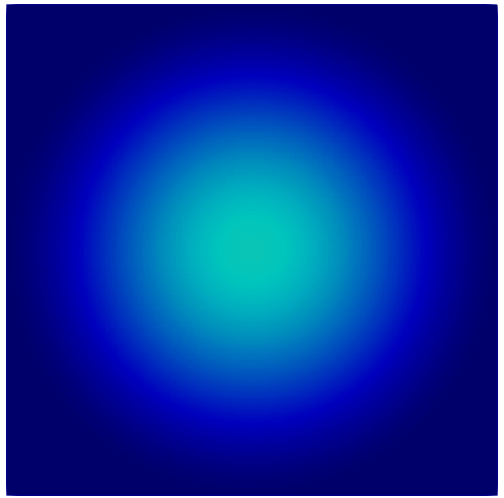}\\
\includegraphics[scale=0.2]{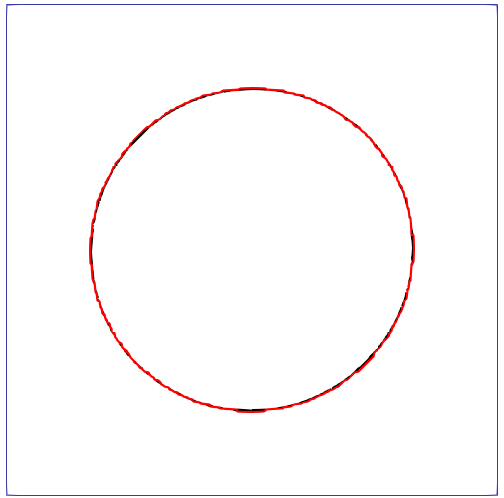}&\includegraphics[scale=0.2]{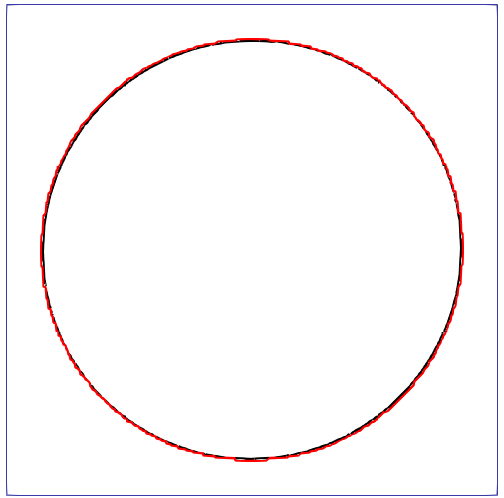}&\includegraphics[scale=0.2]{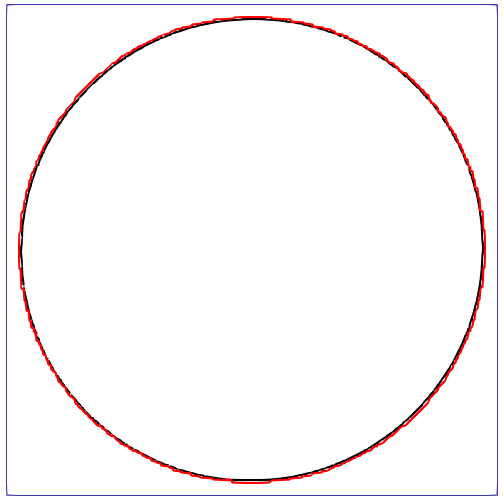}&\includegraphics[scale=0.2]{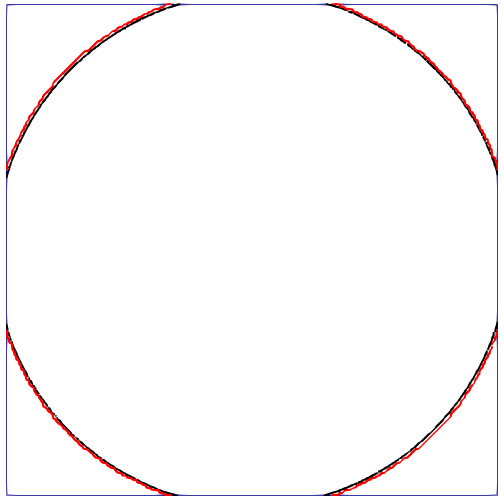}\\
$t=0.0002$&$t=0.005$&$t=0.01$& $t=0.02$
\end{tabular}
\caption{Evolution of the numerical solution (top) and the interface position (bottom) of the exact and numerical solutions at different times.}
\label{Fig:ContoursDiffTimesSelfSimSol}
\end{figure}

To test whether the chosen time step  has an impact or not on the evolution of the free boundary, we compute the interface position of the numerical solution for different values of $\Delta t$. Figure~\ref{Fig:ContoursDiffTimeStepSelfSimSol} shows the free boundary at time $t=0.01$ for different values of the time step. The numerical experiments show that the propagation of the free boundary is not sensitive to the chosen time step as long as the latter does not impact the stability of the solution.
\begin{figure}[!h]
\centering
\includegraphics[scale=0.2]{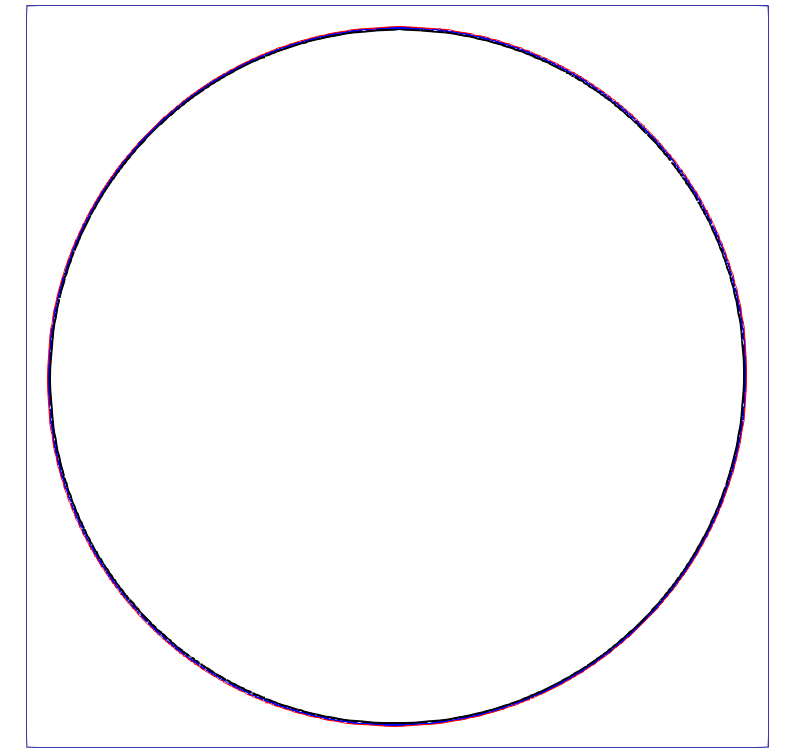}
\caption{Interface position of the numerical solution at time $t=0.001$ for different values of the time step: $\Delta t=10^{-5}$ (blue), $10^{-6}$ (black) and $ 10^{-7}$ (red).}
\label{Fig:ContoursDiffTimeStepSelfSimSol}
\end{figure}

\subsection{Manufactured solution}
\label{SubsectionManSol}
We consider a regular manufactured solution for \eqref{ThinFilmEquation} of the form 
\begin{equation}
u(r,t)=
\left\lbrace
\begin{aligned}
&C\exp\left(-\dfrac{\sigma(t)}{L^2-r^2}\right), \quad r<L,\\
&0,\quad r\geqslant L,
\end{aligned}
\right.
\label{uExactManSol}
\end{equation}
where $r=r(\bm{x},t):=|\bm{x}|/\beta(t)$, the given functions $\sigma:[0,T]\rightarrow\mathbb{R}$ and $\beta:[0,T]\rightarrow\mathbb{R}$ are smooth, $L>0$ and $C>0$ are constants. The manufactured solution \eqref{uExactManSol} is a $\mathcal{C}^{\infty}$-function on the whole plane, as opposed to the previous test case, where we have piecewise regularity. We set the auxiliary variable $w=-\gamma\Delta u$.  Since the function \eqref{uExactManSol} does not satisfy the partial differential equation \eqref{ThinFilmEquation}, one has to find the residual
\begin{equation}
\begin{aligned}
S=u_t-\nabla\cdot(f(u)\nabla w)&=u_t-\nabla\cdot\left(f(u)\Big(\frac{\partial w}{\partial r}\frac{x}{r\beta(t)^2},\frac{\partial w}{\partial r}\frac{y}{r\beta(t)^2}\Big)^T\right),\\
&=u_t-\dfrac{1}{\beta(t)^2}\left(\dfrac{1}{r}\dfrac{\partial w}{\partial r}\Big(f(u)+rf^{\prime}(u)\dfrac{\partial u}{\partial r}\Big)+f(u)\dfrac{\partial^2 w}{\partial r^2}\right).
\end{aligned}
\label{ResidualGeneral}
\end{equation} 

We provide the main terms in \eqref{ResidualGeneral} to avoid lengthy calculations to readers interested in using this  manufactured solution. We  calculate 
\begin{align}
\dfrac{\partial u}{\partial r}&=-\dfrac{2\sigma ru }{(L^2-r^2)^2}, \label{duExactManSol}\\
\dfrac{\partial^2 u}{\partial r^2}&=-\dfrac{2\sigma u(L^2+3r^2)}{(L^2-r^2)^3}+\dfrac{4\sigma^2 r^2 u}{(L^2-r^2)^4}.\label{dduExactManSol}
\end{align}
From \eqref{duExactManSol}-\eqref{dduExactManSol}, we obtain 
\begin{equation}
w=-\gamma\left(\frac{1}{r}\dfrac{\partial u}{\partial r}+\dfrac{\partial^2 u}{\partial r^2}\right)=2\gamma\sigma u\left(\dfrac{1}{(L^2-r^2)^2}+\dfrac{L^2+3r^2}{(L^2-r^2)^3}-\dfrac{2\sigma r^2}{(L^2-r^2)^4}\right).
\label{wExactManSol}
\end{equation}
Equation \eqref{wExactManSol} leads to the following expression
\begin{equation}
\begin{aligned}
\dfrac{\partial w}{\partial r}=&2\gamma\sigma\Big\lbrace\dfrac{\partial u}{\partial r}\Big(\dfrac{1}{(L^2-r^2)^2}+\dfrac{L^2+3r^2}{(L^2-r^2)^3}-\dfrac{2\sigma r^2}{(L^2-r^2)^4}\Big)\\
&+u\Big(\dfrac{4r}{(L^2-r^2)^3}+\dfrac{12r(L^2+r^2)}{(L^2-r^2)^4}-\dfrac{4\sigma r(L^2+3r^2)}{(L^2-r^2)^5}\Big)\Big\rbrace.
\end{aligned}
\label{dwExactManSol}
\end{equation}
From \eqref{dwExactManSol}, we calculate 
\begin{equation}
\begin{aligned}
\dfrac{\partial^2 w}{\partial r^2}=&2\gamma\sigma\Big\lbrace\dfrac{\partial^2 u}{\partial r^2}\Big(\dfrac{1}{(L^2-r^2)^2}+\dfrac{L^2+3r^2}{(L^2-r^2)^3}-\dfrac{2\sigma r^2}{(L^2-r^2)^4}\Big)\\
& + 2 \dfrac{\partial u}{\partial r}\Big(\dfrac{4r}{(L^2-r^2)^3}+\dfrac{12r(L^2+r^2)}{(L^2-r^2)^4} - \dfrac{4\sigma r(L^2+3r^2)}{(L^2-r^2)^5}\Big)\\
& + u\Big(\dfrac{4(L^2+5r^2)}{(L^2-r^2)^4}+\dfrac{12(L^4+10L^2r^2+5r^4)}{(L^2-r^2)^5}-\dfrac{4\sigma(L^4+18L^2r^2+21r^4)}{(L^2-r^2)^6}\Big)\Big\rbrace.
\end{aligned}
\label{ddwExactManSol}
\end{equation}
We end up with the system 
\begin{equation}
\begin{aligned}
&\partial_tu-\nabla\cdot\big(f(u)\nabla w\big)=S \quad \text{in } \Omega_T,\\
&w=-\gamma\Delta u \quad \text{in } \Omega_T.
\end{aligned}
\label{ThinFilmManufSolSyst}
\end{equation}
The manufactured solution \eqref{uExactManSol} does not satisfy the mass conservation property \eqref{MassConservation}. Therefore, one does not need to impose this property on the discrete solution. In such case, \textit{Scheme3}  reduces to \textit{Scheme2} and if convergence occurs for \textit{Scheme1} then the solution converges to the solution of \textit{Scheme2} after a finite number of iterations (see the last paragraph in section \ref{SubSectionClassTrunctionMethod}).

\subsubsection{A constant mobility function}
Assume that $f(u)=M\in \mathbb{R}$ is constant. From \eqref{ResidualGeneral}, we get
\begin{equation}
S=u_t-\dfrac{M}{\beta(t)^2}\left(\frac{1}{r}\dfrac{\partial w}{\partial r} + \dfrac{\partial^2 w}{\partial r^2}\right).
\label{ResConsMob}
\end{equation}

In our numerical investigation of the order of convergence in space, we compute the $L^2$-error between the manufactured solution  and the numerical solutions computed with \textit{Scheme2} and \textit{Scheme4}, respectively.  We choose a time step $\Delta t= 0.001$, a final computational time  $T=0.5$, a tolerance $\epsilon=10^{-8}$ and set
\begin{equation}
\gamma=1,\quad M=1, \quad C=1,\quad L=0.5,\quad \sigma(t)= 1,\quad \beta(t)=t+1.
\end{equation}
The Dirichlet  boundary conditions $u_h=\Pi_hu$ and $w_h=\Pi_hw$ are used. Tables~\ref{tab:ManSolConsMobErrorTrunc} and \ref{tab:ManSolConsMobErrorBarrett} show the errors, rate of convergence and computational cost for different grids for \textit{Scheme2} and \textit{Scheme4}, respectively. Optimal convergence rates, i.e.\  $\mathcal{O}(h^{2})$ and $\mathcal{O}(h)$,  are obtained for the $L^2$- and $H^1$-error, respectively, with both schemes. This is consistent with the fact that $u$ is regular on the whole plane. However, the errors on $w_h$ are larger with both norms. 
\begin{table}[!h]
\centering
\begin{scriptsize}
\begin{tabular}{l|c|c|c|c|c|c|c|c|c|l|}
\cline{2-9}
&\multicolumn{4}{|c|}{$L^2$-error and order of convergence } & \multicolumn{4}{|c|}{$H^1$-error and order of convergence}  \\ \cline{2-9}
&\multicolumn{2}{|c|}{Error}&\multicolumn{2}{|c|}{Order} &\multicolumn{2}{|c|}{Error}&\multicolumn{2}{|c|}{Order} \\ \cline{1-10}
\multicolumn{1}{ |c| }{Mesh} &on $u_h$ & on $w_h$ &for $u_h$ &for $w_h$ &on $u_h$ &on $w_h$ &for $u_h$ & for $w_h$ & CPU(s) \\ \cline{1-10}
\multicolumn{1}{ |c| }{$25\times 25$}&$14.7431\times 10^{-5}$& $13.8107\times 10^{-3}$&$2.17$&$1.77$ &$4.5420\times 10^{-3}$& $1.0408$&$0.99$&$0.87$&$0045$  \\ \cline{1-10}
\multicolumn{1}{ |c| }{$50\times 50$}&$3.26144\times 10^{-5}$&$4.04323\times 10^{-3}$&$1.97$&$1.91$ &$2.2728\times 10^{-3}$&$0.5656$&$0.99$&$0.96$&$0170$   \\ \cline{1-10}
\multicolumn{1}{ |c| }{$100\times 100$}&$0.82728\times 10^{-5}$&$1.07304\times 10^{-3}$&$1.99$&$1.97$ &$1.1376\times 10^{-3}$&$0.2907$&$0.99$&$0.98$&$0677$ \\ \cline{1-10}
\multicolumn{1}{ |c| }{$200\times 200$}&$0.20761\times 10^{-5}$&$0.27272\times 10^{-3}$ & --- & ---&$0.5689\times 10^{-3}$&$0.1464$  & --- & ---&$2174$ \\ \cline{1-10}
\end{tabular}
\caption{Manufactured Solution: Errors, order of convergence and CPU time of the solution computed with \textit{Scheme2} for a constant mobility function.}
\label{tab:ManSolConsMobErrorTrunc}
\end{scriptsize}
\end{table}

\begin{table}[!h]
\centering
\begin{scriptsize}
\begin{tabular}{l|c|c|c|c|c|c|c|c|c|c|c|l|}
\cline{2-9}
&\multicolumn{4}{|c|}{$L^2$-error and order of convergence } & \multicolumn{4}{|c|}{$H^1$-error and order of convergence}  \\ \cline{2-9}
&\multicolumn{2}{|c|}{Error}&\multicolumn{2}{|c|}{Order} &\multicolumn{2}{|c|}{Error}&\multicolumn{2}{|c|}{Order} \\ \cline{1-10}
\multicolumn{1}{ |c| }{Mesh} &on $u_h$ & on $w_h$ &for $u_h$ &for $w_h$&on $u_h$ &on $w_h$ &for $u_h$ & for $w_h$  & CPU(s)\\ \cline{1-10}
\multicolumn{1}{ |c| }{$25\times 25$}&$14.6657\times 10^{-5}$&$13.8067\times 10^{-3}$&$2.14$&$1.77$&$4.5414\times 10^{-3}$&$1.0408$&$0.99$&$0.87$ & $0081$ \\ \cline{1-10}
\multicolumn{1}{ |c| }{$50\times 50$}&$3.31382\times 10^{-5}$&$4.04353\times 10^{-3}$&$1.98$&$1.91$&$2.2727\times 10^{-3}$&$0.5656$&$0.99$&$0.96$ & $0301$\\ \cline{1-10}
\multicolumn{1}{ |c| }{$100\times 100$}&$0.83959\times 10^{-5}$&$1.07317\times 10^{-3}$&$1.99$&$1.97$&$1.1375\times 10^{-3}$&$0.2907$&$0.99$&$0.99$&$1524$ \\ \cline{1-10}
\multicolumn{1}{ |c| }{$200\times 200$}&$0.21064\times 10^{-5}$&$0.27275\times 10^{-3}$&---&---&$0.5689\times 10^{-3}$&$0.1454$& --- & --- &$8739$\\ \cline{1-10}
\end{tabular}
\caption{Manufactured Solution: Errors, order of convergence and CPU time for the solution computed with \textit{Scheme4} for a constant mobility function. $\varrho=900$.}
\label{tab:ManSolConsMobErrorBarrett}
\end{scriptsize}
\end{table}

\subsubsection{An $u$-dependent mobility function}
Now we assume that $f(u)=u$. From \eqref{ResidualGeneral}, we get
\begin{equation}
S=u_t-\dfrac{1}{\beta(t)^2}\left(\dfrac{1}{r}\dfrac{\partial w}{\partial r}\big(u+r\dfrac{\partial u}{\partial r}\big)+u\dfrac{\partial^2 w}{\partial r^2}\right).
\end{equation} 
In our convergence analysis for non-constant mobility, we proceed as in the previous subsection and take the same parameter and boundary conditions except for the time step $\Delta t= 10^{-5}$ and the final computational time $t=0.005$. Tables~\ref{tab:ManSolNonConsMobErrorTrunc} and \ref{tab:ManSolNonConsMobErrorBarrett} show the errors, rate of convergence and computational cost for different grids for \textit{Scheme2} and \textit{Scheme4}, respectively. Optimal convergence rates in the $L^2$- and $H^1$-error are also obtained for a $u$-dependent mobility function.  The difference of order of convergence  between the test cases in sections \ref{SubSectSelfSimSol} and \ref{SubsectionManSol} confirms the impact of the regularity of the solution on the spatial order of convergence. 
\begin{table}[!h]
\centering
\begin{scriptsize}
\begin{tabular}{l|c|c|c|c|c|c|c|c|c|l|}
\cline{2-9}
&\multicolumn{4}{|c|}{$L^2$-error and order of convergence} & \multicolumn{4}{|c|}{$H^1$-error and order of convergence}  \\ \cline{2-9}
&\multicolumn{2}{|c|}{Error}&\multicolumn{2}{|c|}{order}&\multicolumn{2}{|c|}{Error}&\multicolumn{2}{|c|}{order}  \\ \cline{1-10}
\multicolumn{1}{ |c| }{Mesh} & on $u_h$ & on $w_h$ & for $u_h$ &for $w_h$ & on $u_h$ & on $w_h$ & for $u_h$ &for $w_h$   & CPU(s)  \\ \cline{1-10}
\multicolumn{1}{ |c| }{$25\times 25$}&$4.13823\times 10^{-5}$& $3.59119\times 10^{-2}$&$1.98$&$1.50$ &$4.5494\times 10^{-3}$& $2.5509$&$0.99$&$1.15$&$0046$ \\ \cline{1-10}
\multicolumn{1}{ |c| }{$50\times 50$}&$1.04799\times 10^{-5}$&$1.26161\times 10^{-2}$&$1.97$&$1.92$ &$2.2837\times 10^{-3}$& $1.1486$&$1.00$&$1.46$&$0190$ \\ \cline{1-10}
\multicolumn{1}{ |c| }{$100\times 100$}&$0.26651\times 10^{-5}$&$0.33111\times 10^{-2}$&$1.99$&$1.97$  &$1.1389\times 10^{-3}$& $0.4161$&$1.00$&$1.30$&$0677$ \\ \cline{1-10}
\multicolumn{1}{ |c| }{$200\times 200$}&$0.06695\times 10^{-5}$&$0.08401\times 10^{-2}$  & --- & --- &$0.5690\times 10^{-3}$&$0.1683$  & --- & ---&$2200$  \\ \cline{1-10}
\end{tabular}
\caption{Manufactured Solution: Errors, order of convergence and CPU time of the solution computed with \textit{Scheme2} for a $u$-dependent mobility function.}
\label{tab:ManSolNonConsMobErrorTrunc}
\end{scriptsize}
\end{table}
\begin{table}[!h]
\centering
\begin{scriptsize}
\begin{tabular}{l|c|c|c|c|c|c|c|c|c|c|c|l|}
\cline{2-9}
&\multicolumn{4}{|c|}{$L^2$-error and order of convergence } & \multicolumn{4}{|c|}{$H^1$-error and order of convergence}  \\ \cline{2-9}
&\multicolumn{2}{|c|}{Error}&\multicolumn{2}{|c|}{Order} &\multicolumn{2}{|c|}{Error}&\multicolumn{2}{|c|}{Order} \\ \cline{1-10}
\multicolumn{1}{ |c| }{Mesh} &on $u_h$ & on $w_h$ &for $u_h$ &for $w_h$&on $u_h$ &on $w_h$ &for $u_h$ & for $w_h$  & CPU(s)\\ \cline{1-10}
\multicolumn{1}{ |c| }{$25\times 25$}&$3.44889\times 10^{-5}$&$5.43728\times 10^{-2}$&$1.67$&$2.36$&$4.5609\times 10^{-3}$&$2.3486$&$0.99$&$1.42$ & $0077$ \\ \cline{1-10}
\multicolumn{1}{ |c| }{$50\times 50$}&$1.08011\times 10^{-5}$&$1.05393\times 10^{-2}$&$1.97$&$1.98$&$2.2826\times 10^{-3}$&$0.8731$&$1.00$&$1.37$ & $0292$\\ \cline{1-10}
\multicolumn{1}{ |c| }{$100\times 100$}&$0.27431\times 10^{-5}$&$0.26535\times 10^{-2}$&$2.01$&$1.98$&$1.1387\times 10^{-3}$&$0.3372$&$1.00$&$1.13$&$1517$ \\ \cline{1-10}
\multicolumn{1}{ |c| }{$200\times 200$}&$0.06785\times 10^{-5}$&$0.06704\times 10^{-2}$&---&---&$0.5690\times 10^{-3}$&$0.1540$& --- & --- &$8564$\\ \cline{1-10}
\end{tabular}
\caption{Manufactured Solution: Errors, order of convergence and CPU time for the solution computed with \textit{Scheme4} for a $u$-dependent mobility function. $\varrho=900$.}
\label{tab:ManSolNonConsMobErrorBarrett}
\end{scriptsize}
\end{table}

With the manufactured solution \eqref{uExactManSol}, we numerically observed that \textit{Scheme4} converges for values of the parameter $\varrho$ around $900$, and it takes in average 4 iterations to converge for each time step. The first steps take more iterations ( around 15-20 iterations) for the convergence. \textit{Scheme2} is faster and as accurate as \textit{Scheme4}.

In the error and convergence analysis in time for \textit{Scheme2}, we compute a reference solution $(u_h^{\text{ref}}, w_h^{\text{ref}})$ on a grid $100\times 100$  with a small time step $\Delta t=10^{-7}$ to get a good approximation of the semi-discretized solution of the problem. We then vary the time step systematically between $5\times 10^{-5}$ and $4\times 10^{-4}$ using the same grid. For each time step, the $L^2$-error on $u_h$ and $w_h$, computed as 
\begin{equation}
\left(\int_{\Omega}\vert u_h^{\text{ref}} -u_h^{n}\vert^2\,d\bm{x}\right)^{1/2}\quad \text{ and }\quad\left(\int_{\Omega}\vert w_h^{\text{ref}} -w_h^{n}\vert^2\,d\bm{x}\right)^{1/2},
\end{equation}
respectively, are recorded at the final time $T=0.01$. Table~\ref{tab:L2errorManSolNonConsMobTimeOrderConv} shows the $L^2$-error, computational time and order of convergence of the numerical solution with respect to the reference solution. We obtain a second-order convergence rate in time for the main variable $u_h$. Only a first-order convergence rate is observed for the auxiliary variable $w_h$. A second-order convergence rate is obtained for both variables when no projection is done \cite{Keita2021}. However, positivity is not guaranteed in \cite{Keita2021}, i.e.\ only the linear system \eqref{VariationalEqualityScheme} is solved at each time step. We think that it is the projection, which guarantees the positivity, that leads to a loss of order for $w_h$.
\begin{table}[h!]
\centering
\begin{tabular}{c|c|c|c|c|c|c|c}
\cline{2-5}
\multicolumn{1}{c|}{}&\multicolumn{2}{|c|}{$L^2$-error}&\multicolumn{2}{|c|}{Order of convergence}&\multicolumn{1}{|c}{} \\ \cline{1-6}
\multicolumn{1}{|c|}{$\Delta t$}& Error on $u_h$&Error on $w_h$ & Order for $u_h$ & Order for $w_h$ & CPU(s)\\ \cline{1-6}
\multicolumn{1}{|c|}{$4\times 10^{-4}$}&$31.0712\times 10^{-9}$&$2.51871\times 10^{-5}$&$2.01$&$1.22$ &$024$\\ \cline{1-6}
\multicolumn{1}{|c|}{$2\times 10^{-4}$}& $7.67188\times 10^{-9}$&$1.07441\times 10^{-5}$&$2.00$&$1.05$ &$049$ \\ \cline{1-6}
\multicolumn{1}{|c|}{$1\times 10^{-4}$}& $1.90737\times 10^{-9}$& $0.51627\times 10^{-5}$&$1.99$&$1.02$ &$100$\\ \cline{1-6}
\multicolumn{1}{|c|}{$5\times 10^{-5}$}& $0.47929\times 10^{-9}$& $0.25445\times 10^{-5}$& --- & ---  & $188$\\ \cline{1-6}
\end{tabular}
\caption{$L^2$-error as a function of time step $\Delta t$, order of convergence in time and CPU time.}
\label{tab:L2errorManSolNonConsMobTimeOrderConv}
\end{table}

 \subsection{Lubrication}
 \subsubsection{Energy decreasing and mass conservation properties}
Model \eqref{ThinFilmEquation}-\eqref{MobFunc} describes the height of a liquid film  which spreads on a solid surface \cite{greenspan1978,Hocking1981}. Here, we solve \eqref{ThinFilmEquation} with \textit{Scheme3} for the initial data
\begin{equation}
u(x,y,0)=\delta+Ce^{-\sigma(x^2+y^2)},
\label{LubInitCond}
\end{equation}
where $\delta>0$ represents the thickness of an ultra-thin precursor film under the Gaussian fluid droplet centered at the origin.  We use the mobility function \eqref{MobFunc}, where $f_0(u)\equiv1$ and $p=1$, instead of the regularized mobility function \eqref{RegMobFunc} which is used in previous studies \cite{Bertozzi1,WITELSKI2003} to perform numerical simulations using the initial data \eqref{LubInitCond}. The computational domain is the rectangle $\Omega=[-0.5,0.5]\times [-2,2]$. Homogeneous Neumann boundary conditions \eqref{HomoNeumannBoundCond} are used. As in \cite{Bertozzi1},  we use a uniform unstructured mesh of size $70\times 280$, i.e.\ a mesh size $h=0.02$, take $\gamma=1$, $\sigma=80$ and $C=2$. We set $\delta=0$ instead of $\delta=10^{-2}$ which is used in \cite{Bertozzi1,WITELSKI2003}. Figure~\ref{Fig:Contours3dViewDropletSpreading} shows the contours of the numerical solutions and a 3D view of the droplet at different times. No oscillation is observed in the numerical solution. 
\begin{figure}[!h]
\centering
\begin{tabular}{cccc}
\includegraphics[scale=0.2]{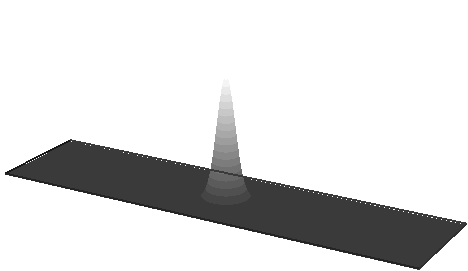}&\includegraphics[scale=0.2]{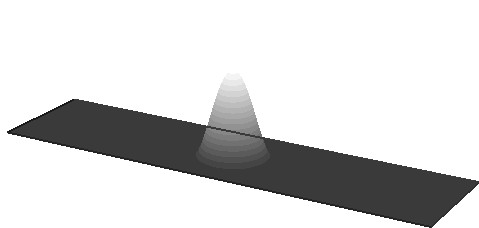}&\includegraphics[scale=0.2]{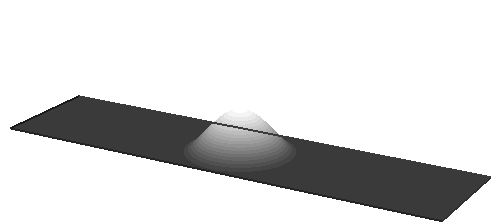}&\includegraphics[scale=0.2]{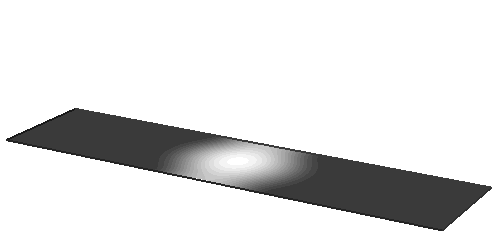}\\
\includegraphics[scale=0.25]{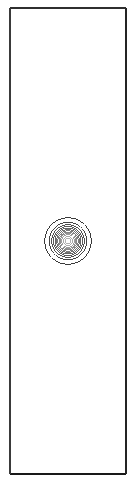}&\includegraphics[scale=0.25]{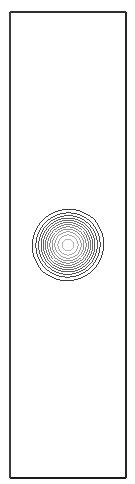}&\includegraphics[scale=0.25]{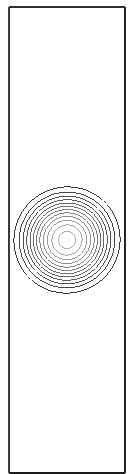}&\includegraphics[scale=0.25]{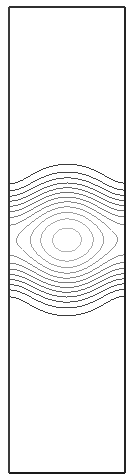}\\
$t=0$&$t=0.0001$&$t=0.0012$&$t=0.01$
\end{tabular}
\caption{Top: A 3D view of the droplet spreading on a solid surface. Bottom: Contours at various times of the numerical solution. $\Delta t=10^{-5}$.}
\label{Fig:Contours3dViewDropletSpreading}
\end{figure}

We now investigate the energy decreasing property \eqref{EnergyProperty} and the total mass conservation.  The discrete total free energy of \eqref{ThinFilmEquation} is defined as
\begin{equation}
J(u_h^n)=\int_{\Omega}\frac{\gamma}{2}\vert\nabla u_h^n\vert^2\,d\bm{x}.
\label{LubDiscreteEnergy}
\end{equation}
Figure~\ref{Fig:FreeEnergyHeightAndMassCons}$(a)$ shows the time-evolution of the discrete total energy \eqref{LubDiscreteEnergy} of the system. The numerical simulations show that the system dissipates the total energy, which agrees well with the energy dissipation property of the equation. The maximum of the solution of \eqref{ThinFilmEquation} with the initial condition \eqref{LubInitCond} occurs at the origin for all times.  Figure~\ref{Fig:FreeEnergyHeightAndMassCons}$(b)$ shows the height of the droplet at the origin as time evolves. After an initial transient phase, the solution evolves like the self-similar source solution \eqref{ExactSol1} for some times, as one can see with the maximal height $u_h^n(0)$ evolving in $t^{-1/3}$. For longer times, the wave reaches the boundary which now dominates the evolution of the solution that becomes no longer radially symmetric. The gradients across the $x$-direction approach zero and the solution evolves like a source type similarity solution  for the one-dimensional case \cite{BernisetAl1992}, as reflected by the maximal height $u_h^n(0)$ evolving in $t^{-1/5}$, before approaching a uniform flat state for large times. These three stages of the evolution of the solution have a slight impact on the  dissipation of the total energy as one can see in Figure~\ref{Fig:FreeEnergyHeightAndMassCons}$(a)$ with the corresponding intervals of times of the three stages in the evolution of the discrete total energy. Figure~\ref{Fig:FreeEnergyHeightAndMassCons}$(c)$ shows that the discrete total mass is preserved over time. The same evolutions are observed in \cite{Bertozzi1,WITELSKI2003} where a regularized mobility \eqref{RegMobFunc} and a thickness $\delta=10^{-2}$ are used by the authors in addition to preserve the positivity of the solution computed with their schemes. This provides a convenient way for validating and showing the robustness of \textit{Scheme3} for preserving the mass, the total energy decreasing property and the positivity of the solution.
\begin{figure}[!h]
\centering
\begin{tabular}{ccc}
\includegraphics[scale=0.38]{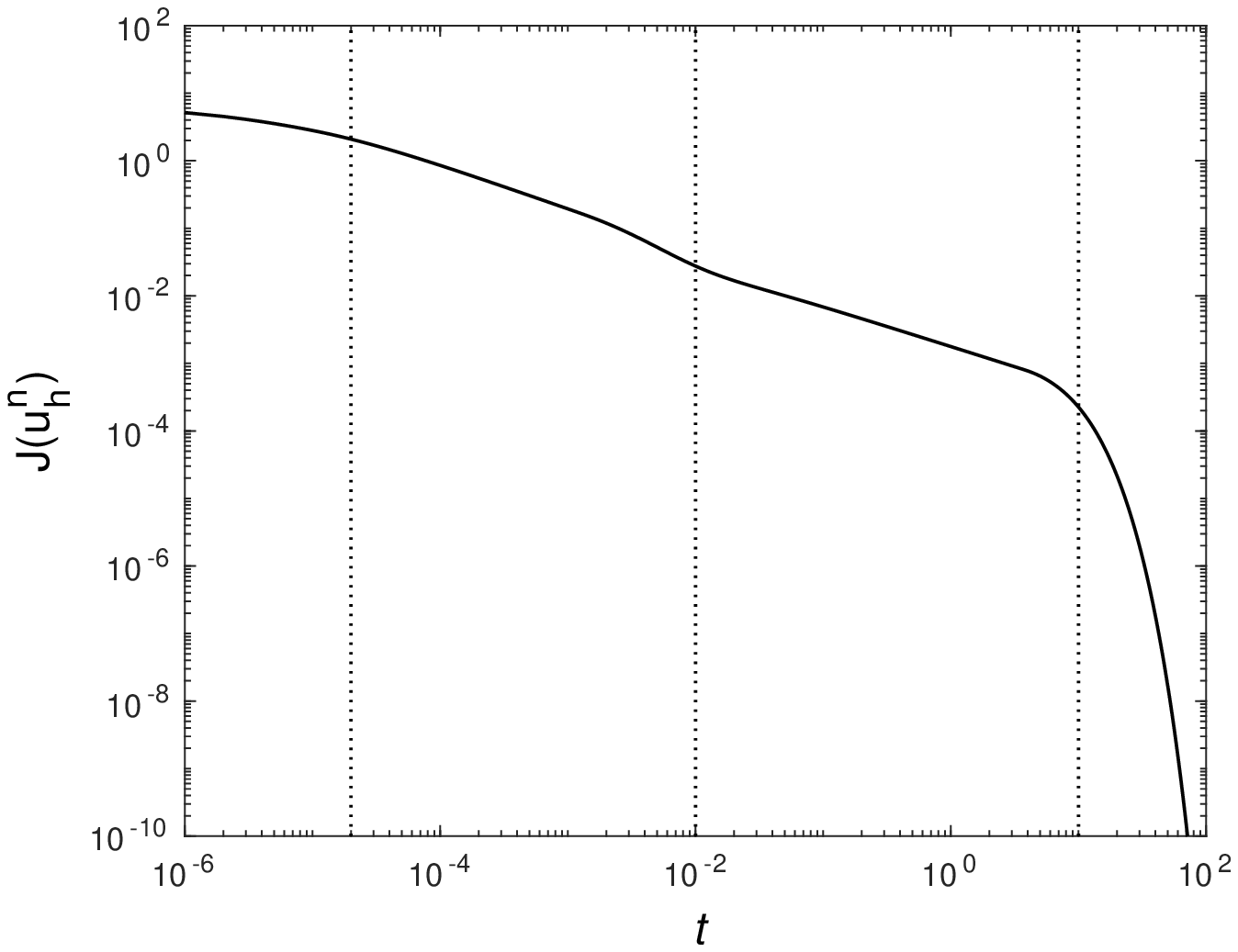}&\includegraphics[scale=0.38]{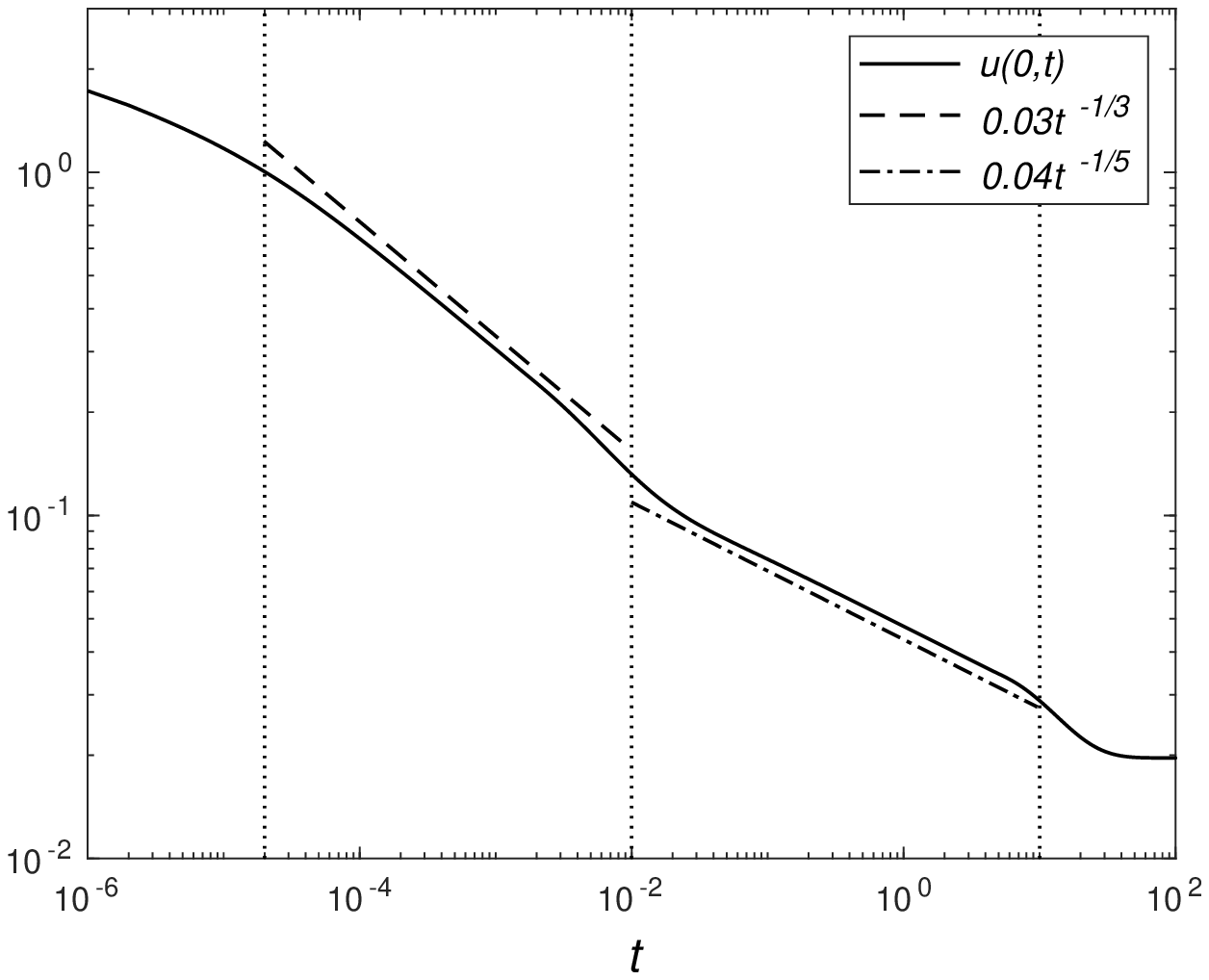}&\includegraphics[scale=0.38]{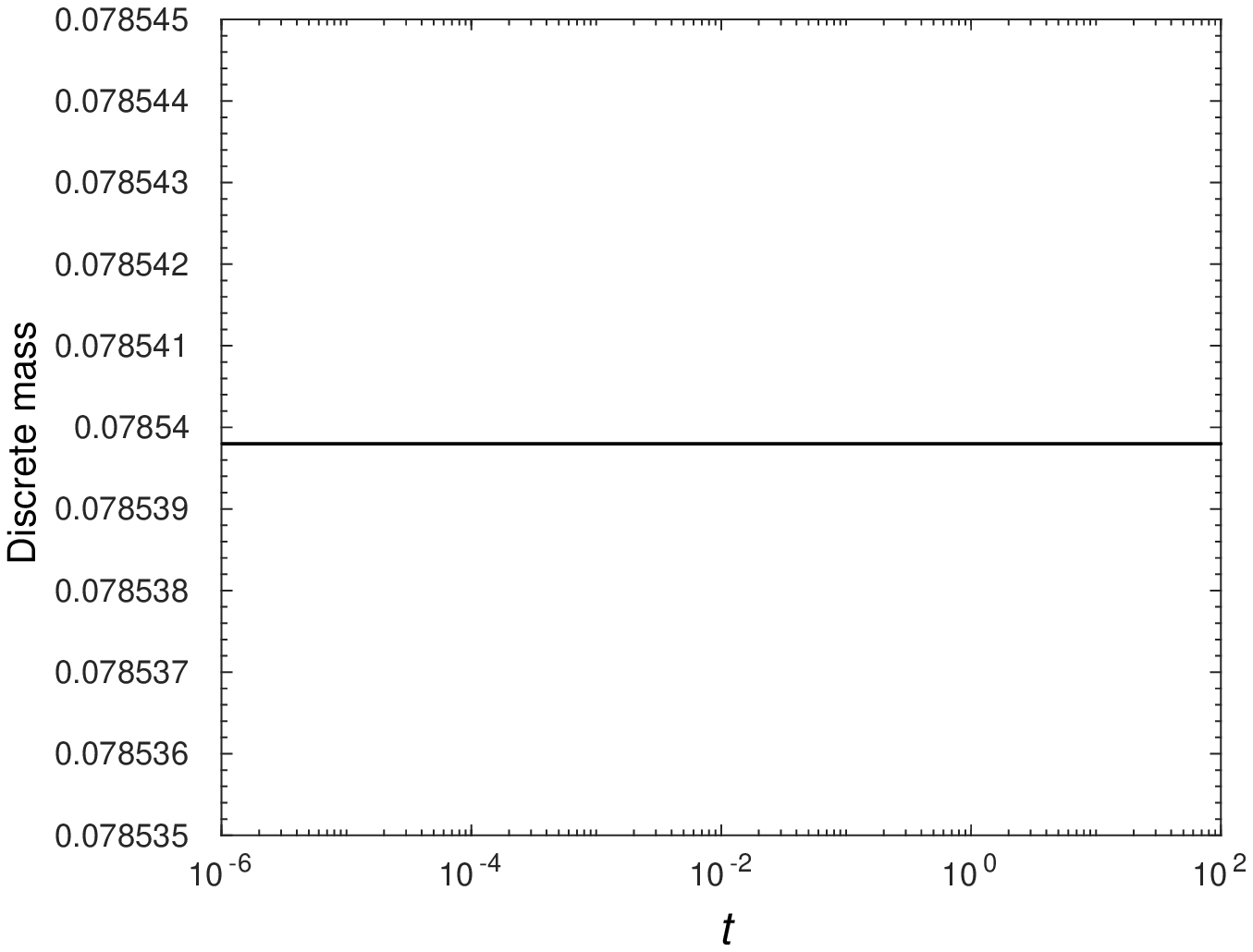}\\
$(a)$ & $(b)$ & $(c)$
\end{tabular}
\caption{\textit{Scheme2}: $(a)$ Dissipation of the total free energy over time. $(b)$ Time-evolution of the droplet maximal height showing the intermediate asymptotics. $(c)$ Time-evolution of the discrete total mass. $\Delta t=10^{-6}$.}
\label{Fig:FreeEnergyHeightAndMassCons}
\end{figure}

\subsubsection{No spreading of support}
The exponent $p$ plays a crucial role in the  qualitative behaviour of solutions to \eqref{ThinFilmEquation}-\eqref{MobFunc}. It is shown in \cite{Bereta1995} in the one-dimensional case that the support of a solution to \eqref{ThinFilmEquation}-\eqref{MobFunc}, where $f_0(u)\equiv1$ and $\gamma=1$, remains constant for values of $p\geqslant4$ as time evolves. We want to test the performance of \textit{scheme3} using this known theoretical result.  To do so, we further consider the one-dimensional test case presented in \cite{Grun2000} (see Figure~4). Here, we model the problem in the two-dimensional rectangular domain $[0,1]\times[0,0.3]$.  The initial condition is given by 
\begin{equation}
u(\bm{x},0)=
\left\lbrace
\begin{aligned}
&0.2(x-0.25), \text{ if } 0.25\leqslant x\leqslant 0.5,\\
&0.2(0.75-x), \text{ if } 0.5<x\leqslant 0.75,\\
&0,\qquad\qquad\quad \text{ otherwise}.
\end{aligned}
\right.
\label{InitSolNoSpreading}
\end{equation}
We use an unstructured mesh of size $h=0.014$ ($100\times 33$ elements). Homogeneous Neumann conditions  are imposed on the boundary. We compute the numerical solution for different values of $p\geqslant 4$. Figure~\ref{Fig:NoSpreadingOfSupport} shows the time-evolution of the numerical solution for $p=4$. Our experiments show that numerical solutions converge for values of $p\geqslant4$ and $t\rightarrow\infty$ to a parabolic profile, and no spreading of support occurs as predicted by the theoretical results in \cite{Bereta1995} and numerical simulations in \cite{Grun2000}. 
\begin{figure}[!h]
\centering
\begin{tabular}{cccc}
\includegraphics[scale=0.27]{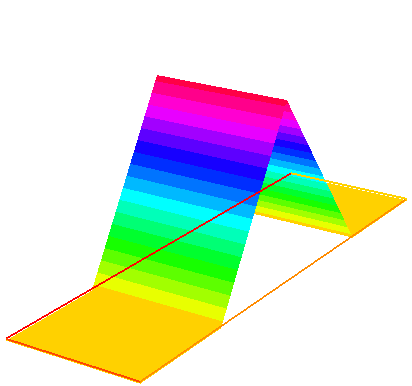}&\includegraphics[scale=0.27]{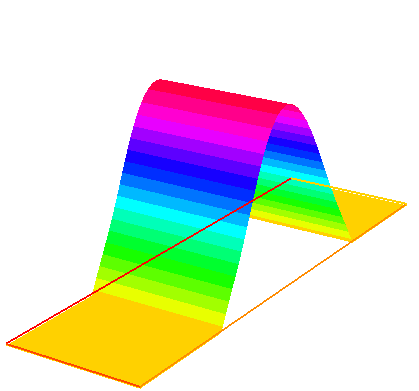}&\includegraphics[scale=0.27]{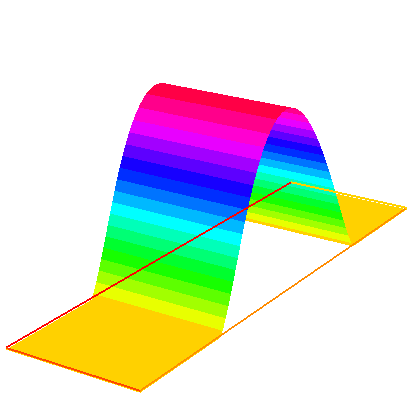}&\includegraphics[scale=0.27]{2DNoSpreading_t20.png}\\
\includegraphics[scale=0.27]{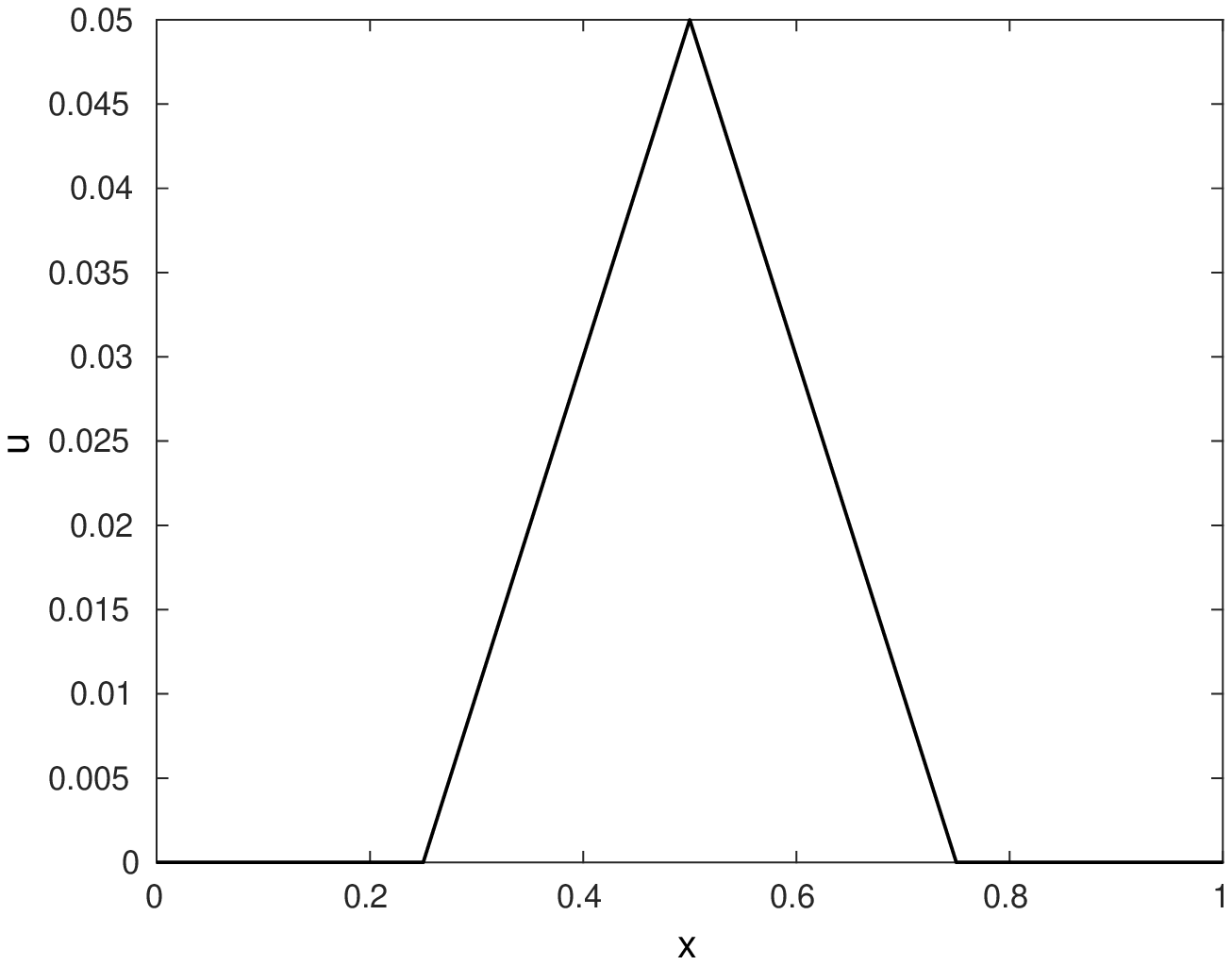}&\includegraphics[scale=0.27]{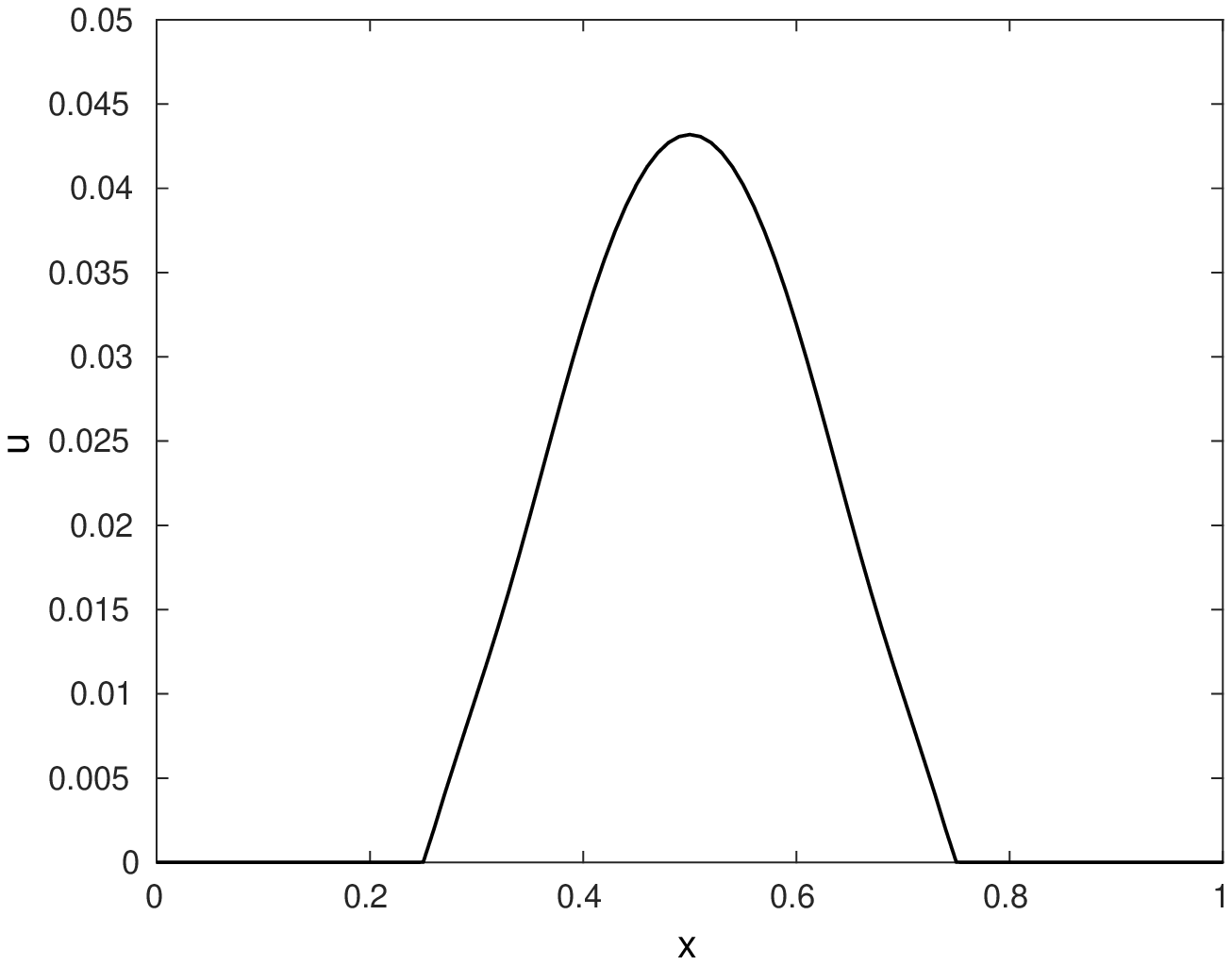}&\includegraphics[scale=0.27]{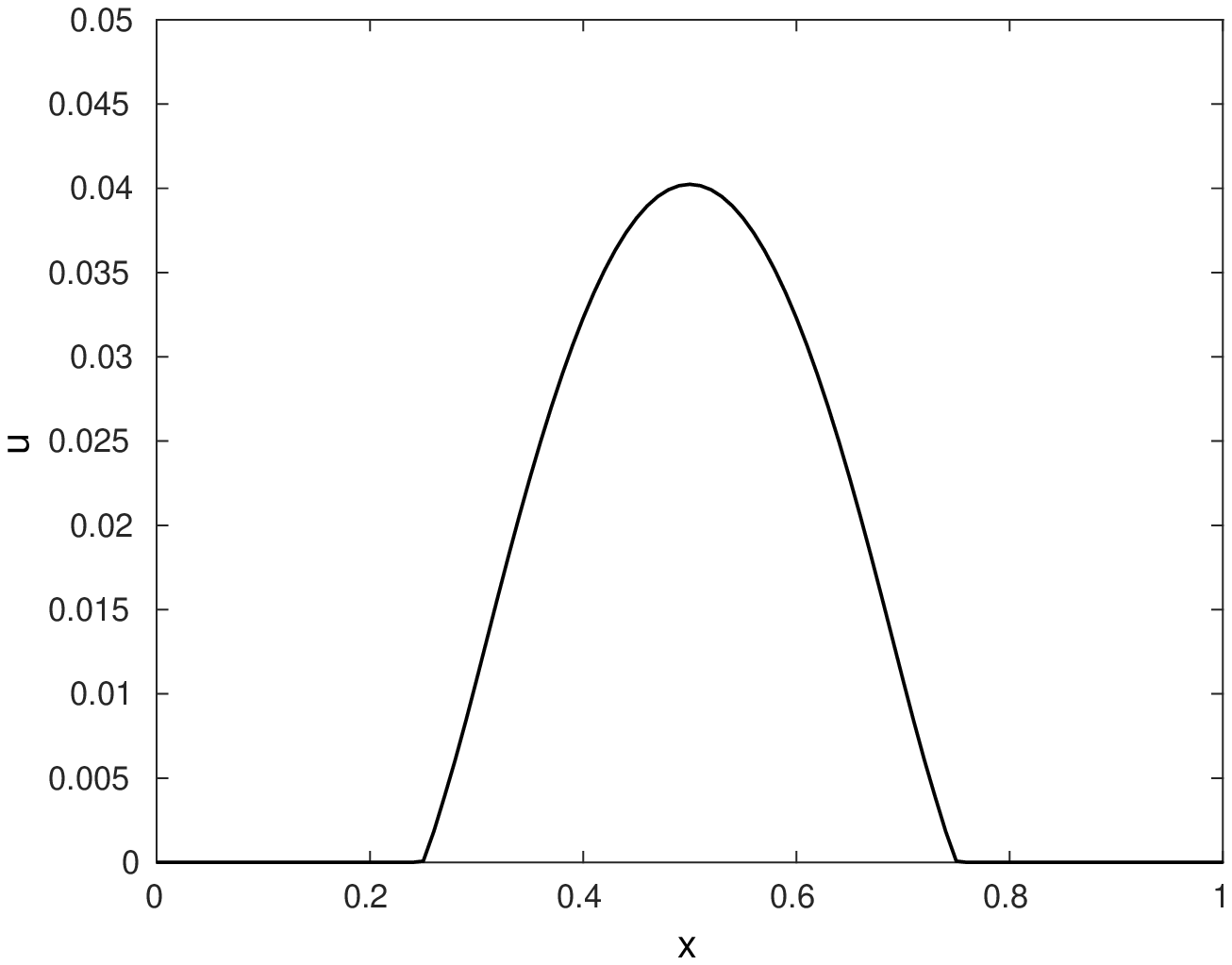}&\includegraphics[scale=0.27]{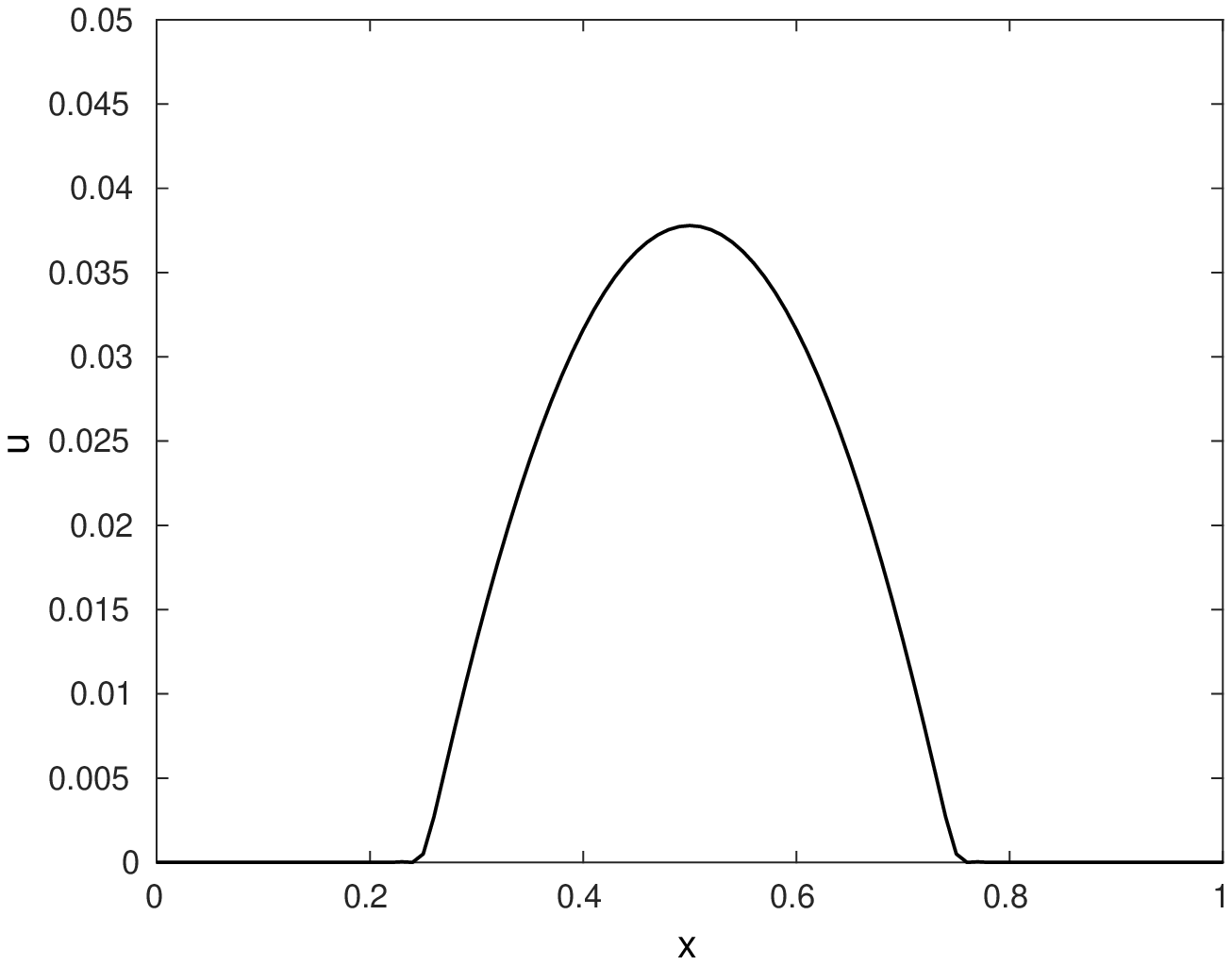}\\
$t=0$&$t=2$&$t=20$&$t=300$
\end{tabular}
\caption{No spreading of support and convergence to a parabolic profile as $t\rightarrow\infty$. Top: 3D view of the numerical solution. Bottom: Plot of the solution along the $x$-axis. $p=4$ and $\Delta t=0.01$.}
\label{Fig:NoSpreadingOfSupport}
\end{figure}

\subsubsection{Dead core phenomenon}
Theoretical studies only guarantee that the support of a solution to \eqref{ThinFilmEquation}-\eqref{MobFunc}, where $f_0(u)\equiv1$ and $\gamma=1$, cannot shrink for values of $p\geqslant \frac{3}{2}$ as time evolves \cite{Bereta1995}. It is known that solutions with initial data bounded away from zero may attain this value on a set of positive measure \cite{Bereta1995}, and this, for a finite interval of time and become again strictly positive \cite{Bertozzi1995}. Such phenomenon is reported in the literature as the so called ``dead core phenomenon" \cite{Grun2000}. A one-dimensional test case illustrating such phenomenon is presented in \cite{Grun2000} (see Figure 5). We further test the robustness of \textit{Scheme3} using this more critical test case in the two-dimensional rectangular domain $[0,1]\times[0,0.3]$. The initial data is given by
\begin{equation}
u(\bm{x},0)=(x-0.5)^4 + 0.001.
\label{InitSolDeadCore}
\end{equation} 
We use a mesh of size $h=0.0047$ ($300\times100$ elements) and a time step $\Delta t =10^{-5}$. Homogeneous Neumann boundary conditions are used. Figure~\ref{Fig:DeadCore} (Top) shows the plot along the $x$-axis of the numerical solution for $p=0.5$ at different times. We can see that a film rupture occurs at a critical time $t^*\approx 0.0005 $ and the solution becomes strictly positive beyond this time. This phenemenon has been described in \cite{Bertozzi1992}. We repeat again the test case, but for $p=2$ and $\Delta t=0.01$. Numerical results are shown in Figure \ref{Fig:DeadCore} (Bottom). The solution remains strictly positive, as predicted by the theory. No dead core phenomenon occurs.
\begin{figure}[!h]
\centering
\begin{tabular}{cccc}
\includegraphics[scale=0.27]{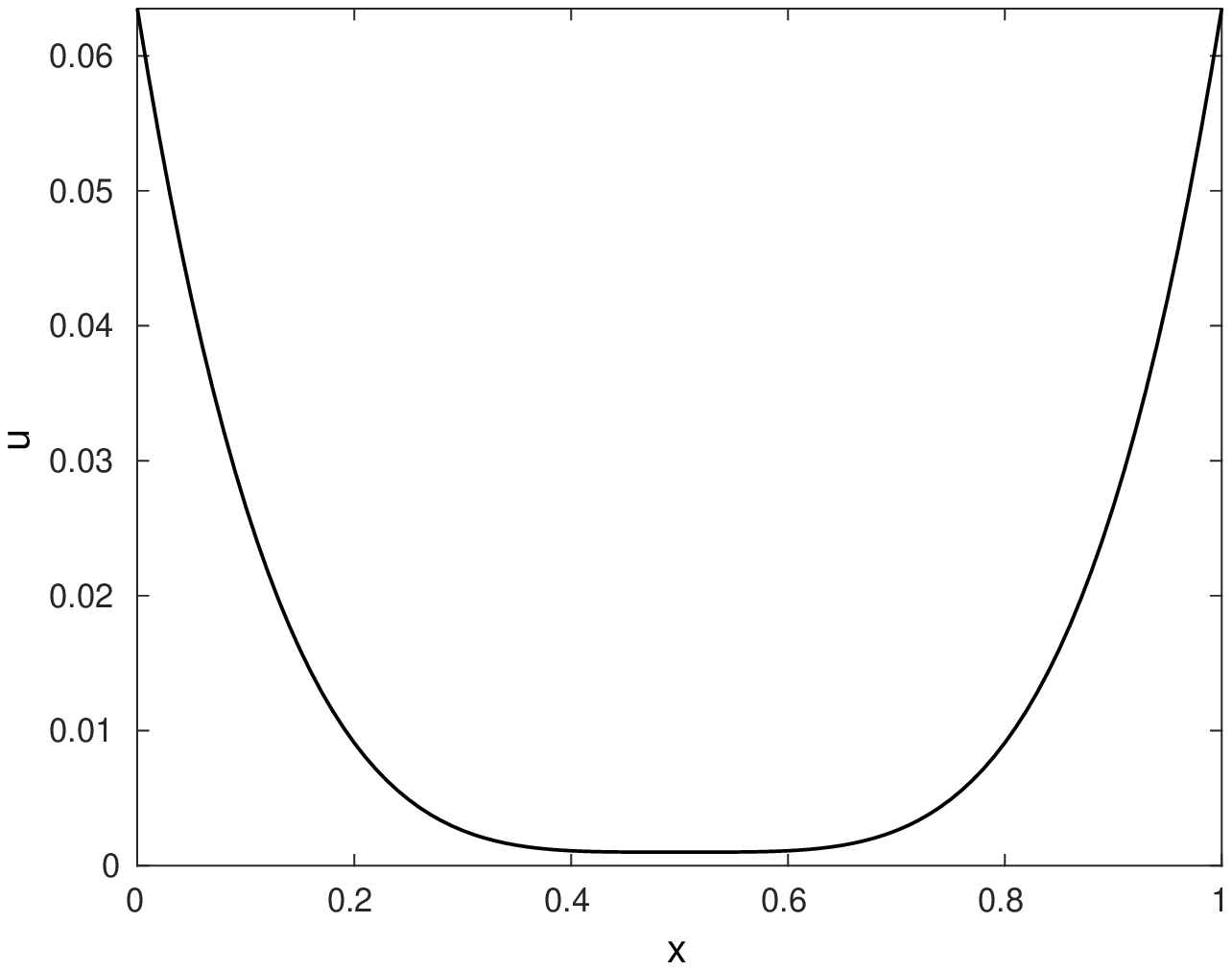}&\includegraphics[scale=0.27]{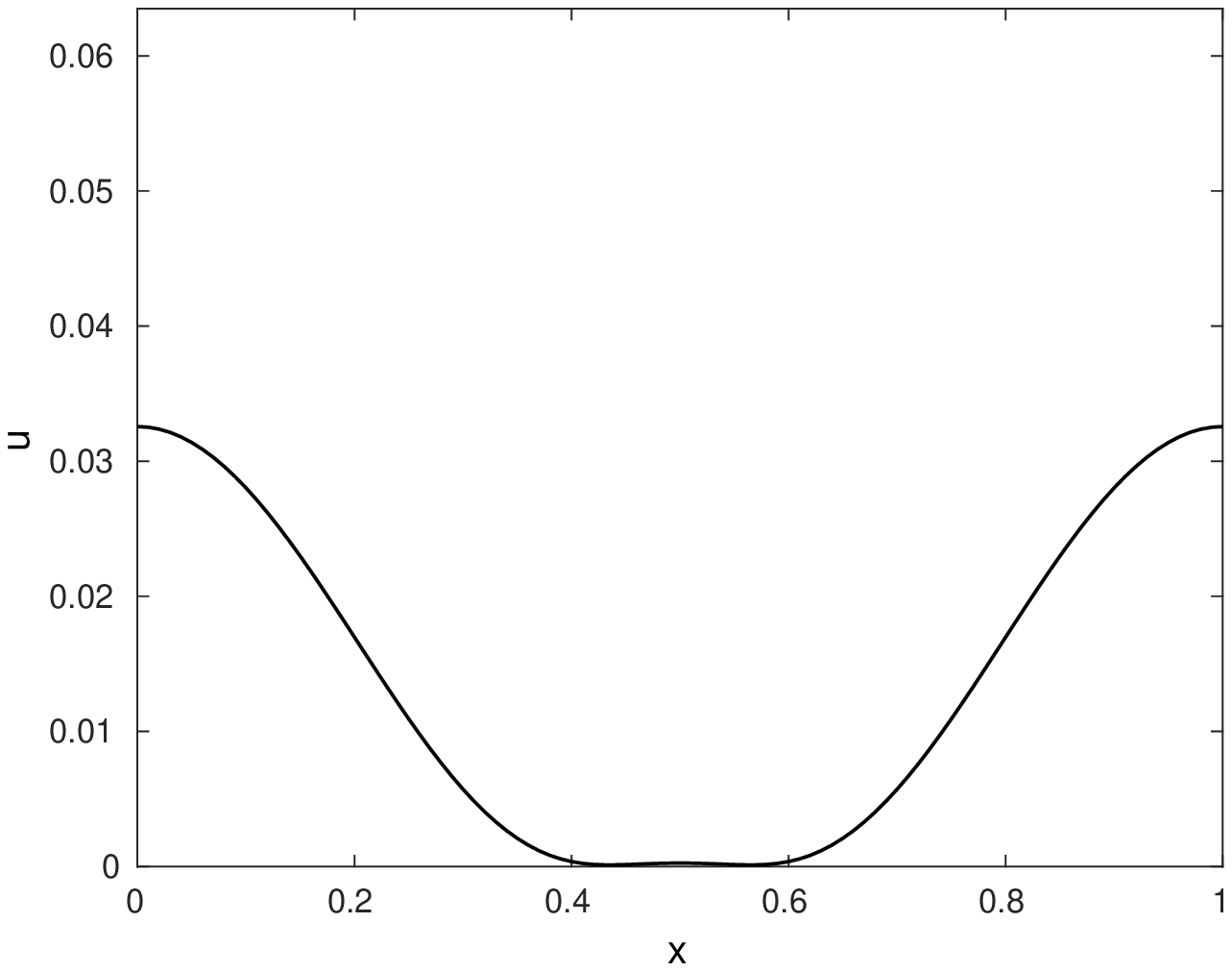}&\includegraphics[scale=0.27]{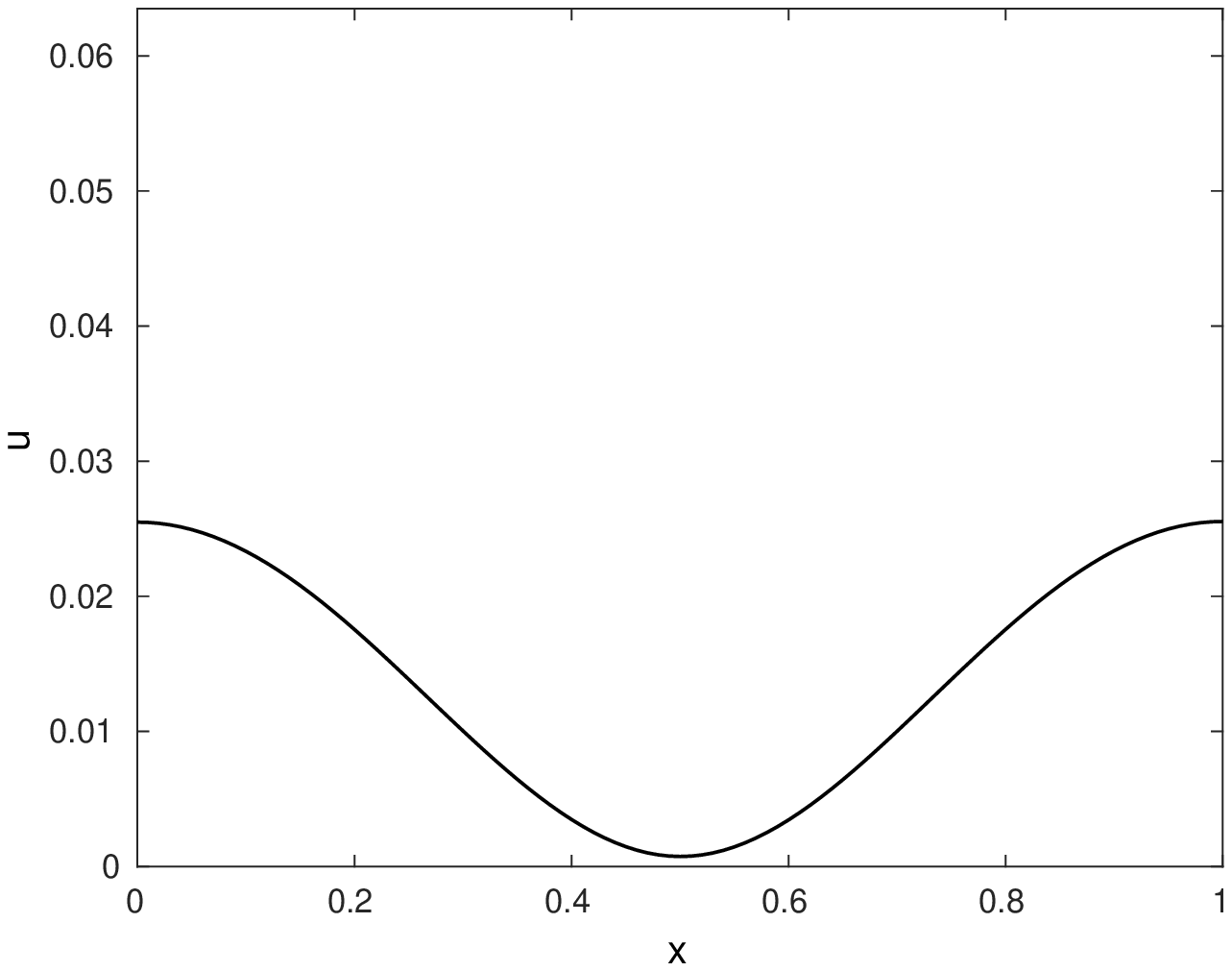}&\includegraphics[scale=0.27]{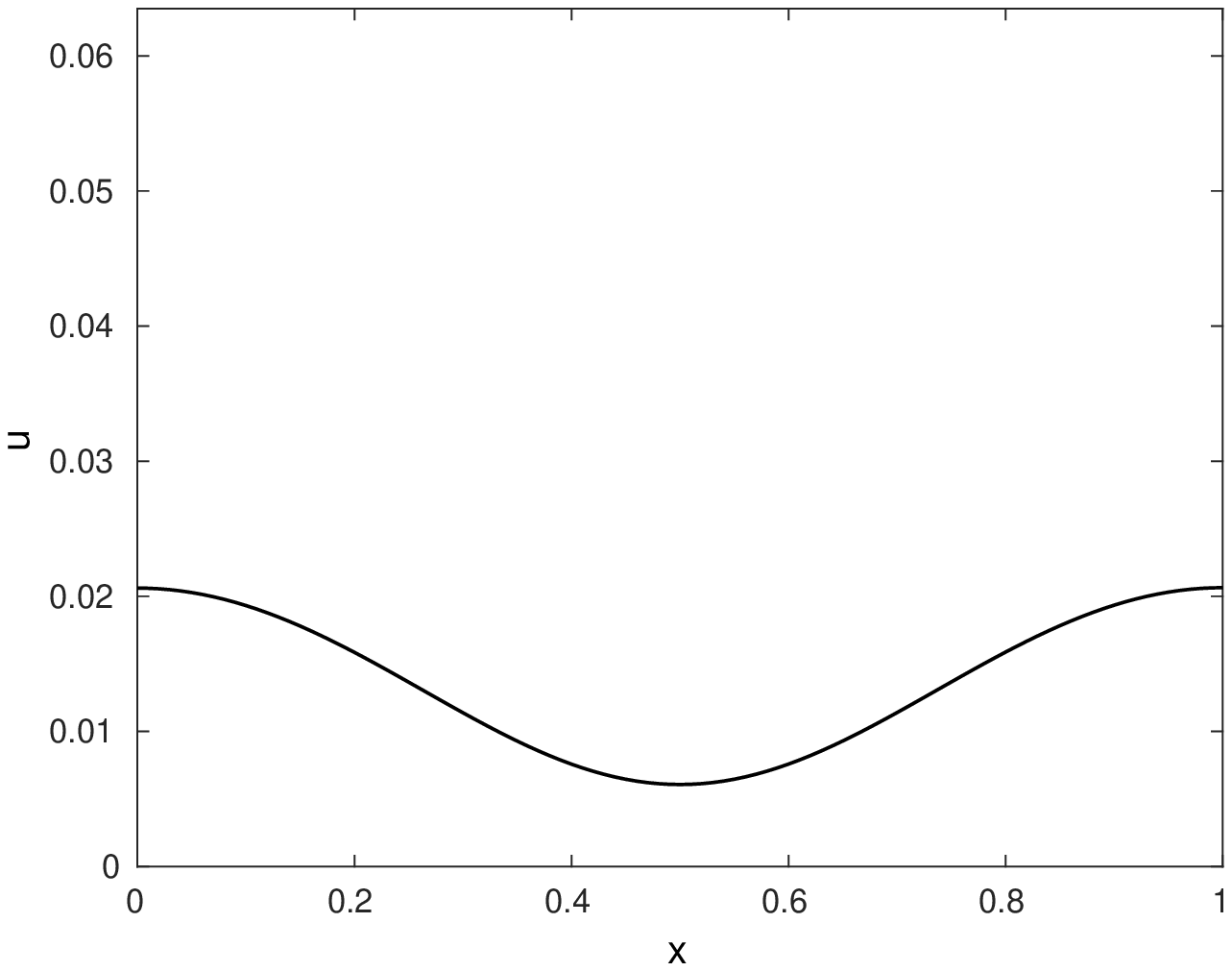}\\
$t=0$&$t=0.0005$&$t=0.0019$&$t=0.005$\\
\includegraphics[scale=0.27]{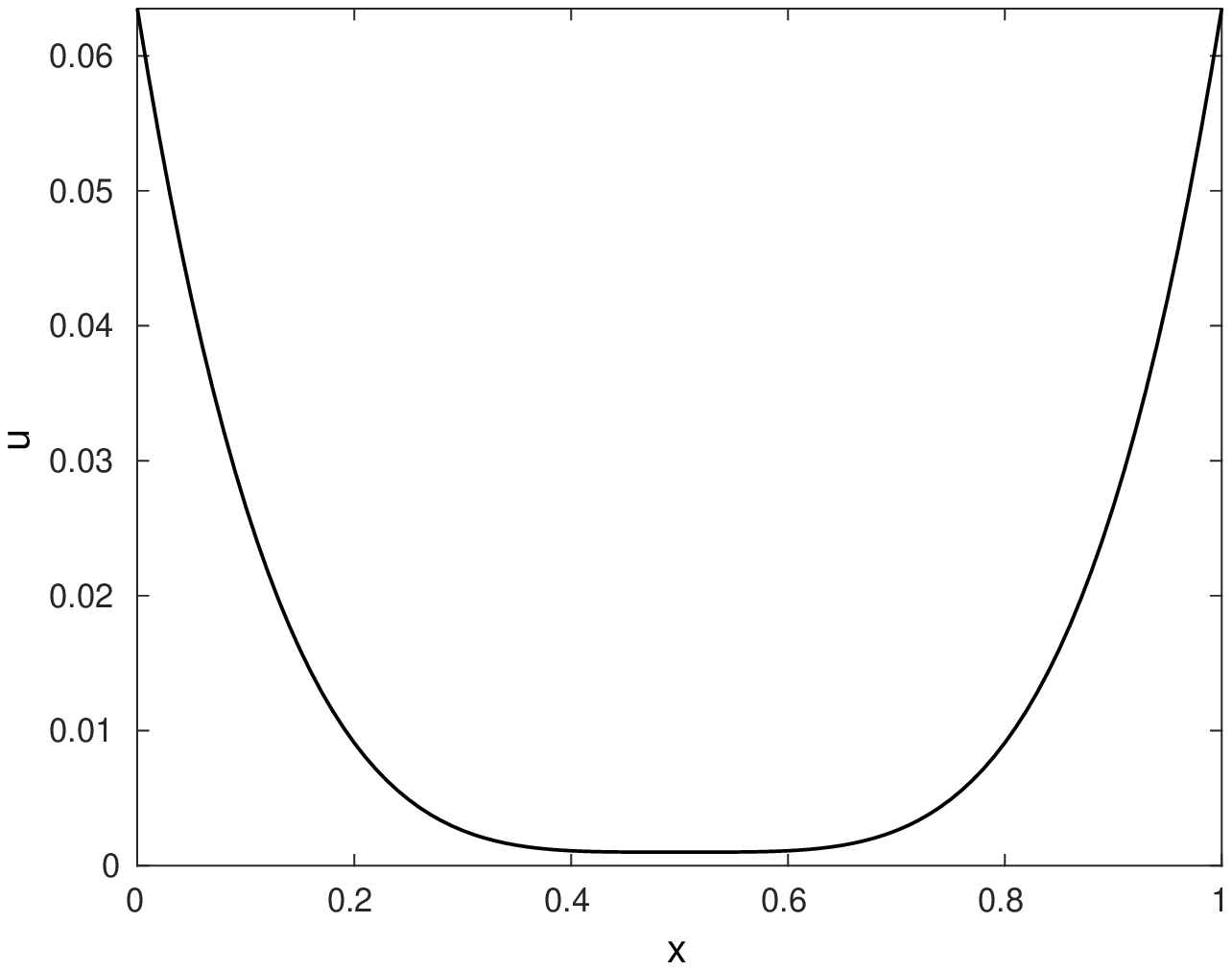}&\includegraphics[scale=0.27]{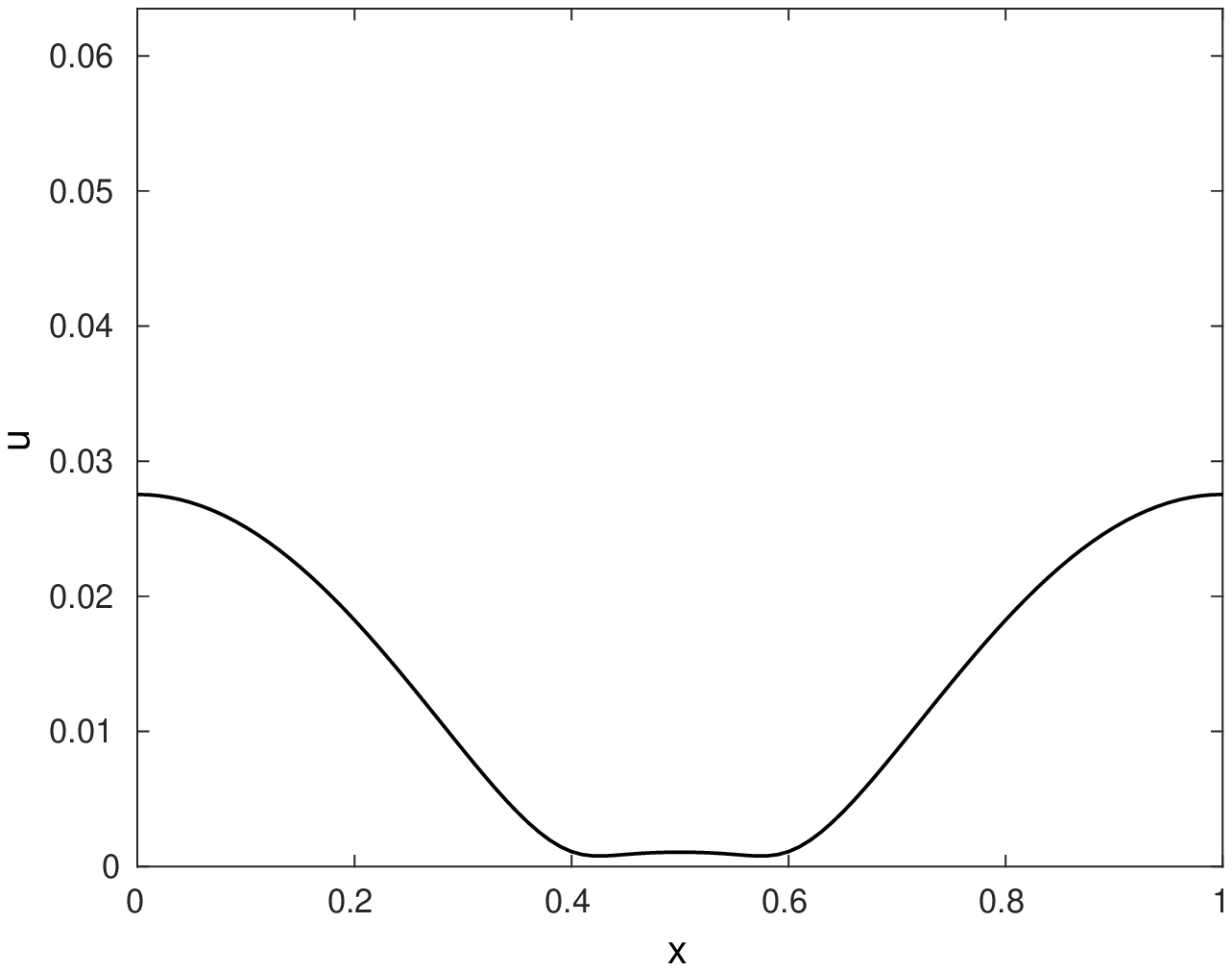}&\includegraphics[scale=0.27]{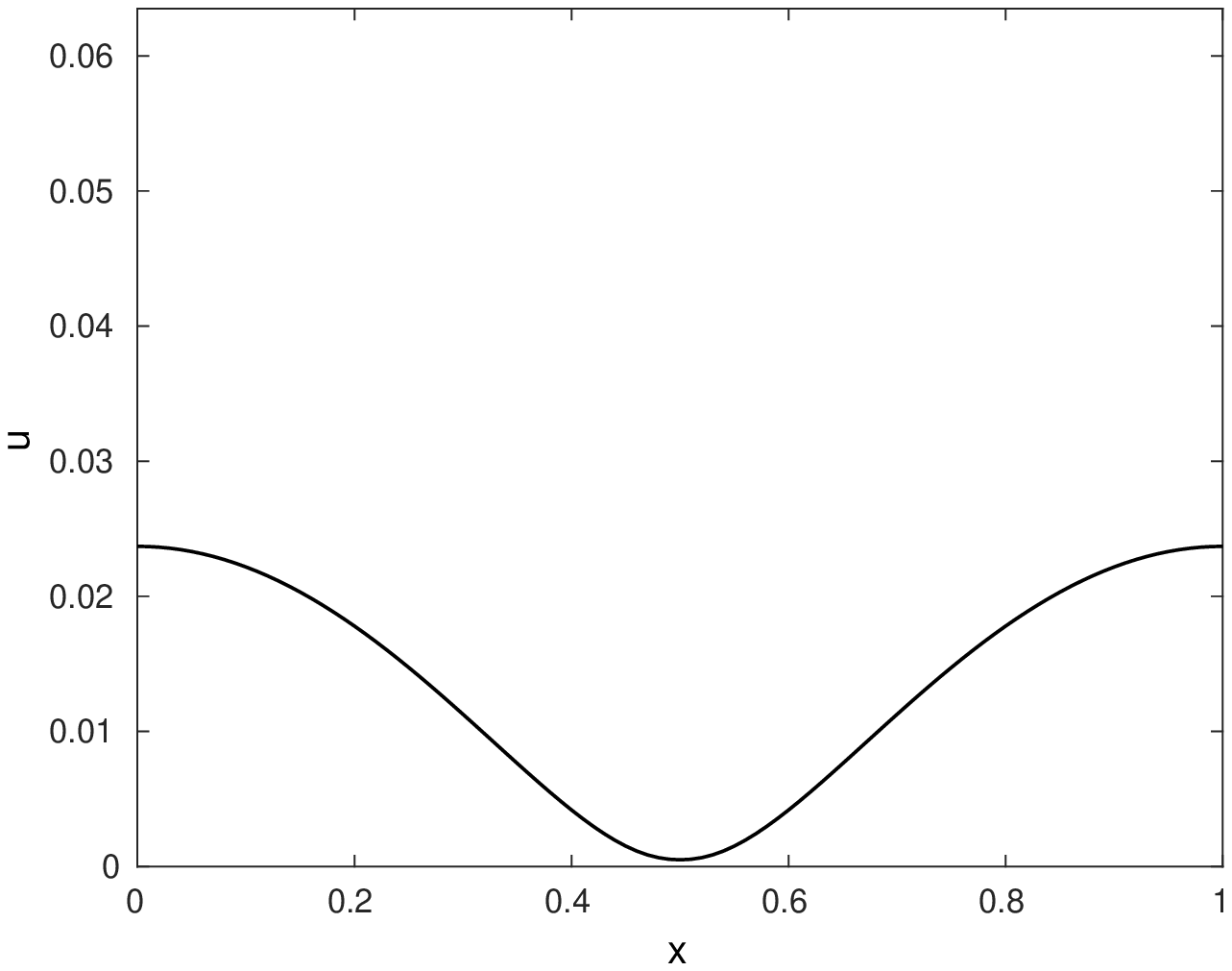}&\includegraphics[scale=0.27]{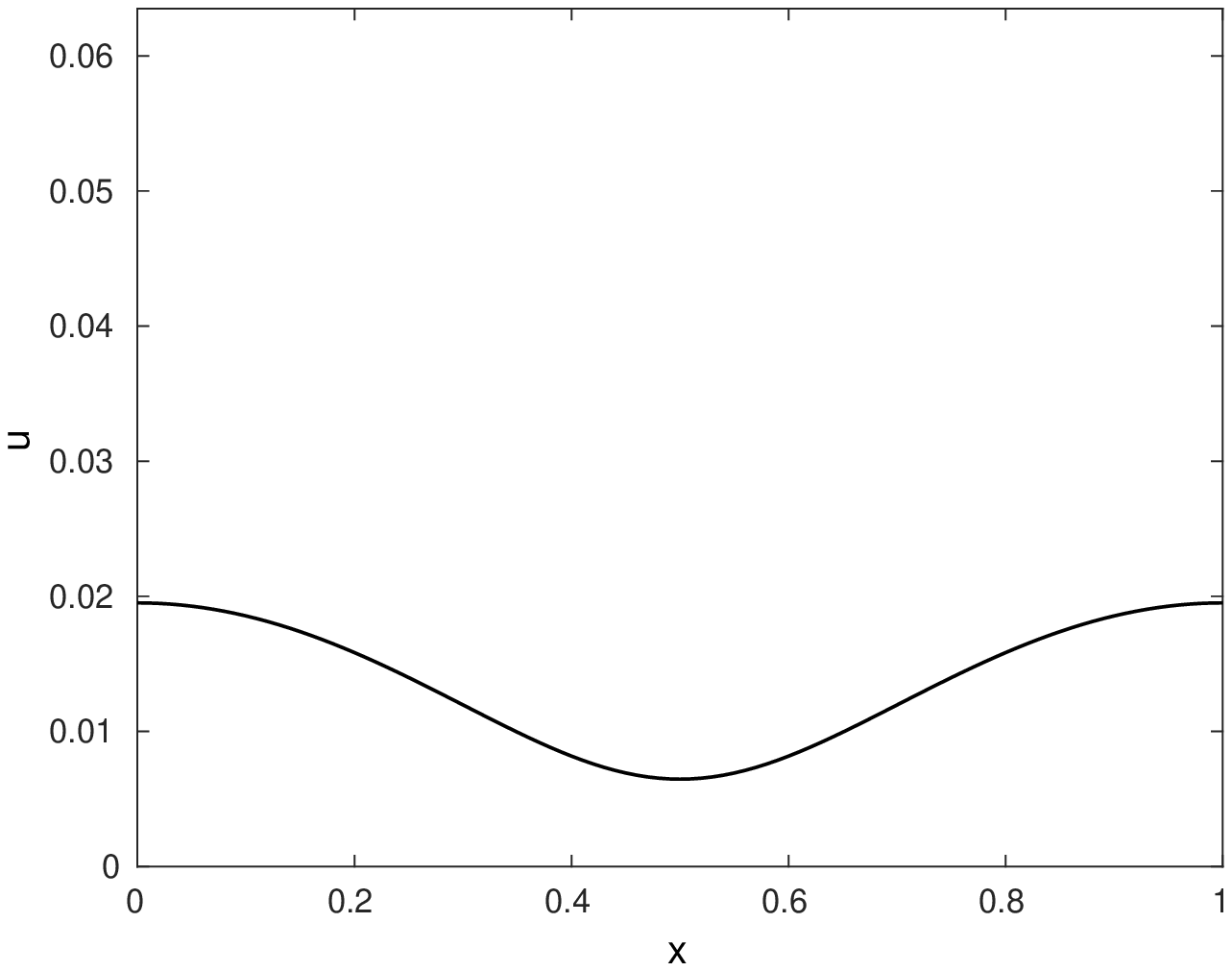}\\
$t=0$&$t=0.6$&$t=1.7$&$t=5$
\end{tabular}
\caption{Top: Illustration of a dead core phenomenon using the initial data \eqref{InitSolDeadCore} and $p=0.5$. Bottom: No dead core phenomenon occurs for $p=2$.}
\label{Fig:DeadCore}
\end{figure}

\subsection{Phase separation with a logarithmic free energy function}
Recall that phase separation in a binary mixture is well-described by \eqref{GenThinFilmEquation} which can be written as 
\begin{equation}
\begin{aligned}
&\partial_tu-\nabla\cdot\big(f(u)\nabla w\big)-\nabla\cdot\big(g(u)\nabla u\big)=0 \quad \text{in } \Omega_T,\\
&w=-\gamma\Delta u \quad \text{in } \Omega_T.
\end{aligned}
\label{ThinFilmSystMod2}
\end{equation} 
In such applications, $u\in[-1,1]$ represents a relative concentration of the binary mixture. Here, we consider the mobility function \eqref{BoundedMob} and the logarithmic free energy \eqref{FreeEnergyLog}-\eqref{F0function}. In this case, $g(u)$ is given by (see \eqref{GfunctionExpl})
\begin{equation}
g(u)=\theta-\theta_c(1-u^2).
\label{GfunctionFinal}
\end{equation}
As opposed to \eqref{ThinFilmSyst}, the formulation \eqref{ThinFilmSystMod2} has no singularity at the limit $\vert u\vert\rightarrow1$ and allows the concentration solution $u$ to reach the bounds $\pm1$ (see discussion in subsection \ref{SubsectionLogFreeEnergy}). Recall also that a solution of \eqref{ThinFilmSystMod2}-\eqref{GfunctionFinal} should satisfy $\vert u(\bm{x},t)\vert\leqslant1$ for all $t>0$ if $\vert u_0(\bm{x})\vert\leqslant1$ \cite{Elliott96}.

We want to use \textit{Scheme1} to solve \eqref{ThinFilmSystMod2}-\eqref{GfunctionFinal}. For that,  a suitable temporal discretization of the nonlinear terms $f(u)$ and $g(u)$ is required for the approximation of the variational problem. As we did in \eqref{VariationalEqualityScheme}, one uses an extrapolation formula of second order for $f(u)$ and $g(u)$ to preserve the linearity and a second order time-stepping method. This leads to the following scheme: Given a suitable approximation of the initial solution $u_h^{0}=\Pi_hu_0\in \mathcal{S}_h^a$ and a proper initialization for $u_h^1\in \mathcal{S}_h^a$, find $\big(\widehat{u}_h^{n+1},w_h^{n+1}\big)\in \mathcal{V}_h \times \mathcal{V}_h$ such that
\begin{eqnarray}
\begin{aligned}
&\int_{\Omega}\left(\dfrac{3\widehat{u}_h^{n+1}-4u_h^{n}+u_h^{n-1}}{2\Delta t}\right)v_h\,d\bm{x}+\int_{\Omega}[2f(u_h^{n})-f(u_h^{n-1})]\nabla w_h^{n+1}\cdot\nabla v_h\,d\bm{x}\\
&\qquad\qquad\qquad\qquad\qquad\qquad\quad +\int_{\Omega}[2g(u_h^{n})-g(u_h^{n-1})]\nabla\widehat{u}_h^{n+1}\cdot\nabla v_h\,d\bm{x}=0,\quad\forall v_h \in {\mathcal{V}_h},\\
&\int_{\Omega} w_h^{n+1}q_h \,d\bm{x}-\int_{\Omega}\gamma\nabla \widehat{u}_h^{n+1}\cdot\nabla q_h\,d\bm{x}=0,\quad\forall q_h \in {\mathcal{V}_h}.
\end{aligned}
\label{VariationalEqualitySchemeSystMod}
\end{eqnarray}
Then, the problem \textbf{(P)} is modified to take into account the boundedness in magnitude by $1$ of the solution. There is no difficulty to show the existence and uniqueness of solution to the modified problem. Finally, the Uzawa algorithm \eqref{lambdak}-\eqref{uk} is then adapted to the modified optimization problem.

A particular regime of phase separation is when one phase is significantly more abundant in the initial mixture. In such case, the minority phase emerges and forms droplets. Larger particles grow at the expense of the smaller ones which shrink and disappear. This form of competitive growth is known as the Ostwald ripening \cite{Pismen2006}. To illustrate such regime, we choose an initial data  as
\begin{equation}
u_0(\bm{x})=b+\sum_{i=1}^Q s_i\omega(\bm{x}-\bm{p}_i),
\label{InitialDataRipening}
\end{equation}
where the constant $b$ belongs to the spinodal interval $\big(-\sqrt{1-\theta/\theta_c},\sqrt{1-\theta/\theta_c}\big)$, the points $\bm{p}_i\in\Omega$, the  $s_i$ are constant scalars, the function $\omega(\bm{x}-\bm{p}_i):=e^{-10^4\vert\bm{x}-\bm{p}_i\vert^2}$ and the integer $Q$ is the number of points. We choose a computational domain $\Omega=[0,1]\times[0,1]$ with an uniform unstructured with elements of average size $h=0.007$. We set $\theta=0.05$, $\theta_c=0.1$, $b=-0.4$ and randomly generate $Q=250$ points and scalars such that the initial data \eqref{InitialDataRipening} satisfies $|u_0(\bm{x})|\leqslant 1$.

The time-evolution of the numerical solution  is displayed in Figure~\ref{Fig:OstwaldTestCase}. From its initial concentration, the system separates quickly into two phases. Small regions of the minority phase combine together to form droplets. Smaller droplets are absorbed into larger ones through diffusion, exhibiting clearly the phenomenon of Ostwald ripening. The computed solution $u_h^{n}$ satisfies $|u_h^n|\leqslant 1$ for all time $t_n \geqslant 0$ with $u_h^n=\pm1$ in some regions. We also notice in practice that for $\theta>0$ big enough, $\widehat{u}_h^n$ computed from \eqref{VariationalEqualitySchemeSystMod} satisfies $|\widehat{u}_h^n|\leqslant 1$. In this case, no projection is needed.  However, this property on $\widehat{u}_h^n$ could be lost as $\theta \rightarrow 0$, but solving the additional optimization problem enforces the solution $u_h^{n}$ to remains in the interval $[-1,1]$ and allows to preserve the mass. The discrete total free energy of \eqref{GenThinFilmEquation} is given by
\begin{equation}
J(u_h^n)=\int_{\Omega}\Big(\frac{\gamma}{2}\vert\nabla u_h^n\vert^2+\varphi(u_h^n)\Big)\,d\bm{x}.
\label{LogDiscreteEnergy}
\end{equation}
A plot of the discrete mass and a log-log scale plot of the total energy \eqref{LogDiscreteEnergy} over time are shown in Figure~\ref{Fig:OstwaldMassCons}, which illustrate that accurate mass-conservation is obtained and the discrete total free energy of the system decreases monotonically.
\begin{figure}[!h]
\centering
\begin{tabular}{cccc}
\includegraphics[scale=0.19]{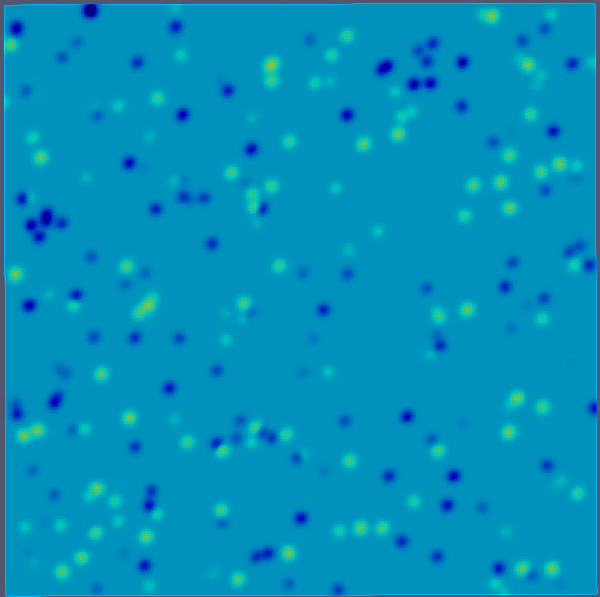}&\includegraphics[scale=0.19]{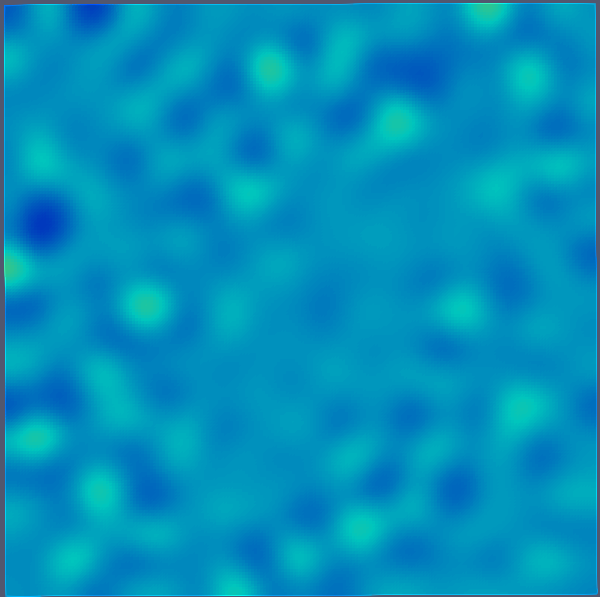}&\includegraphics[scale=0.19]{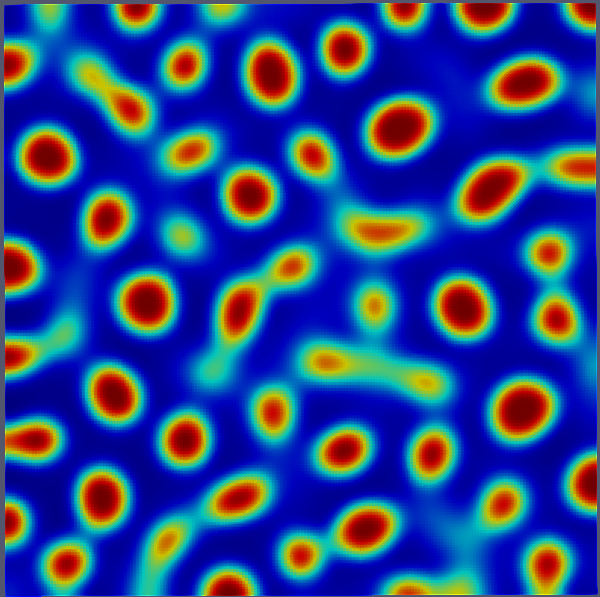}&\includegraphics[scale=0.19]{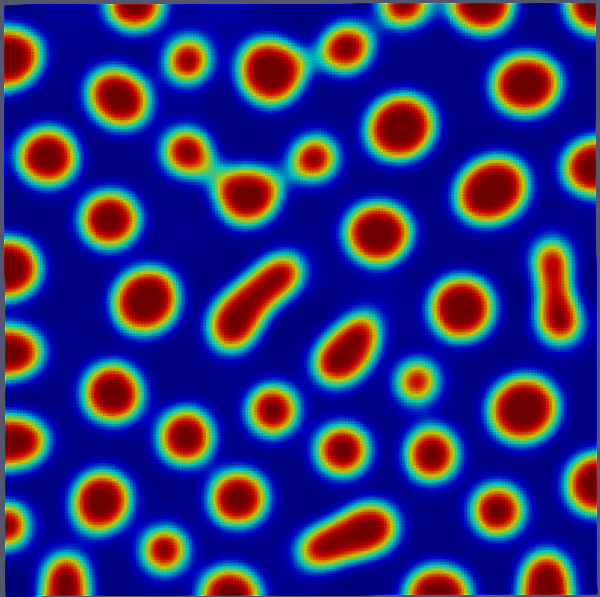}\\
$t=0$&$t=0.025$&$t=0.125$&$t=0.25$\\
\includegraphics[scale=0.19]{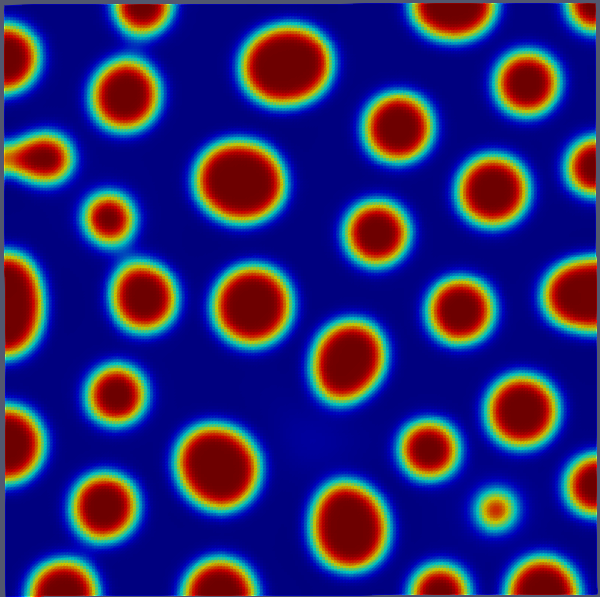}&\includegraphics[scale=0.19]{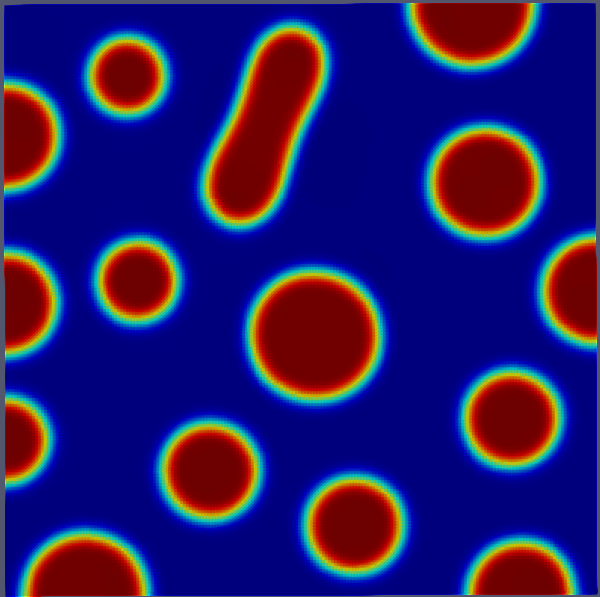}&\includegraphics[scale=0.19]{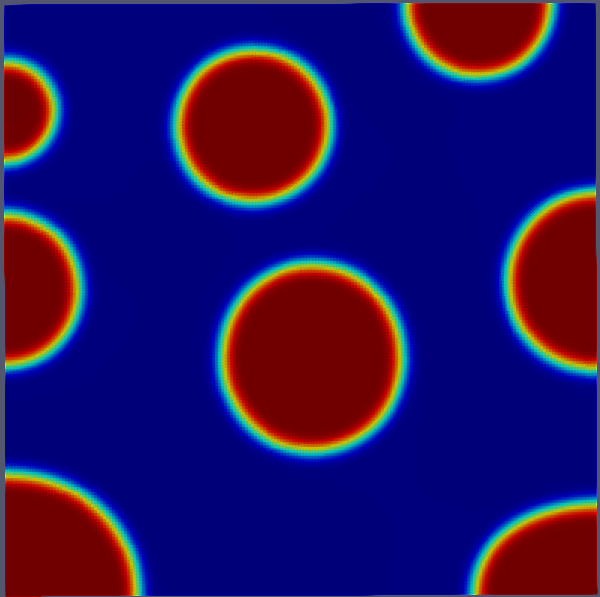}&\includegraphics[scale=0.19]{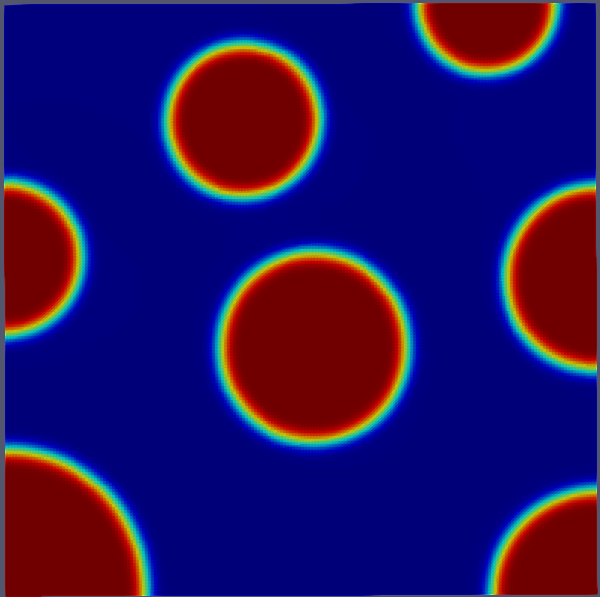}\\
$t=1$&$t=10$&$t=50$&$t=100$
\end{tabular}
\caption{Simulation of a Ostwald ripening process using a logarithmic energy function. $\gamma=10^{-5}$ and $\Delta t=0.001$.}
\label{Fig:OstwaldTestCase}
\end{figure}

\begin{figure}[!h]
\centering
\begin{tabular}{cc}
\includegraphics[scale=0.5]{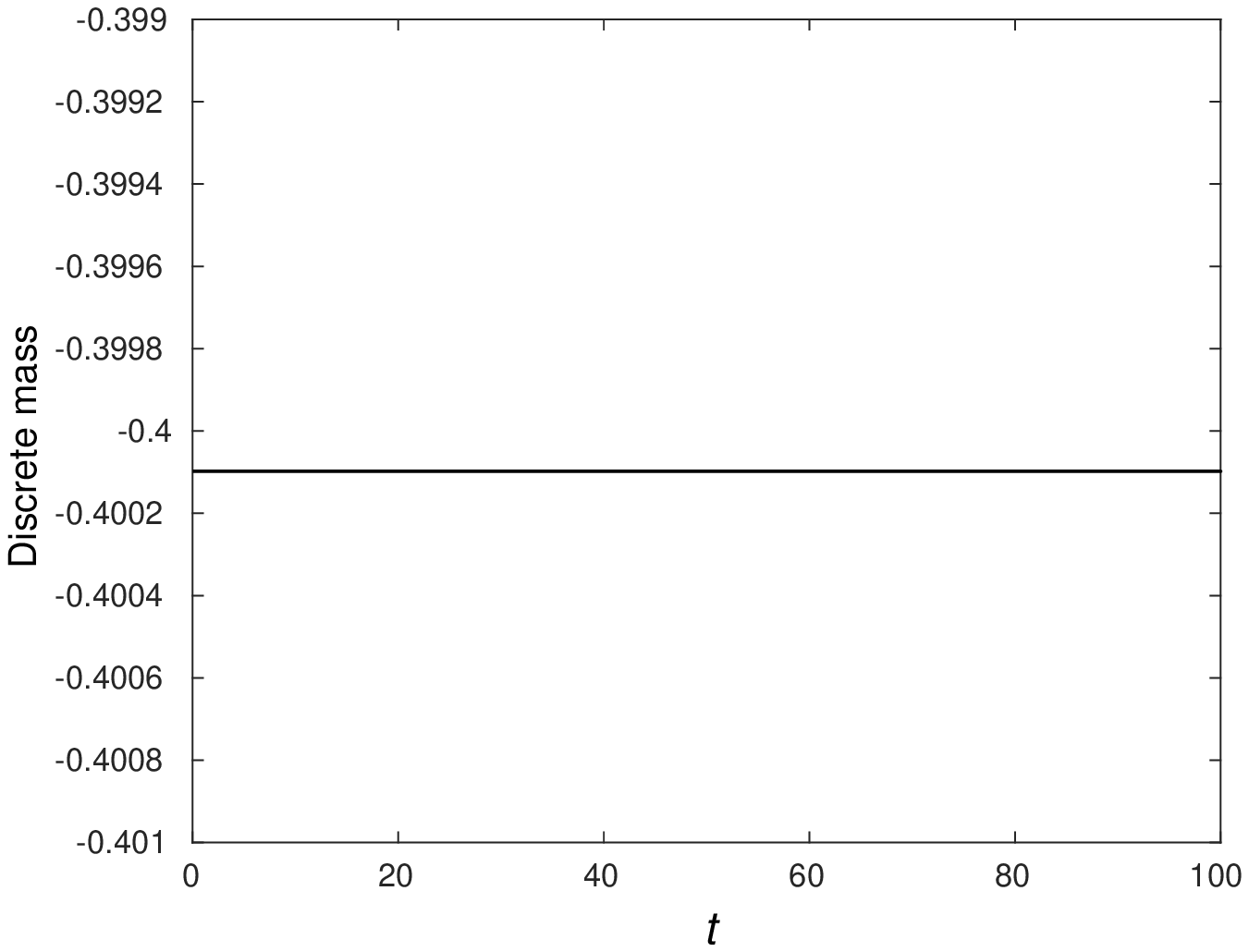}&\includegraphics[scale=0.5]{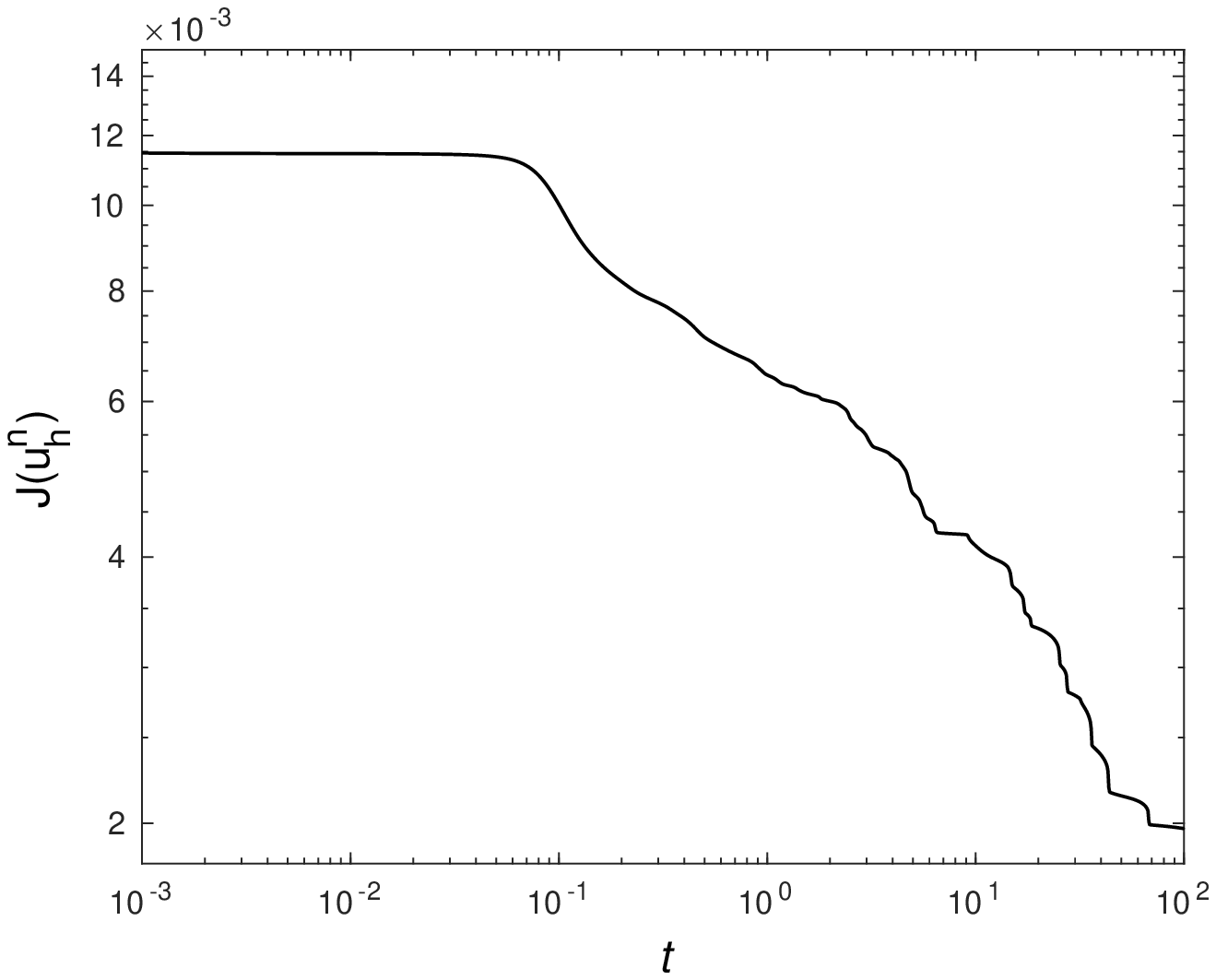}\\
$(a)$& $(b)$
\end{tabular}
\caption{ $(a)$ Time-evolution of the total mass during the Ostwald ripening process. $(b)$ Total free energy of the system over time.}
\label{Fig:OstwaldMassCons}
\end{figure}

\subsection{Discussion on the methods}
While \textit{Scheme4} appears to be more accurate for non regular solutions, the iterative procedure does not always converge, and when convergence occurs, it takes a large number of iterations. This is a weakness of \textit{Scheme4} in terms of efficiency. In terms of computational cost, \textit{Scheme 2} is the winner since no iterative procedure is needed. However, there is a defect of conservation of mass with this numerical method. \textit{Scheme3} is as accurate as the other schemes for regular solutions, preserves all the desired properties of the solutions,  and requires less computational effort than \textit{Scheme1} and \textit{Scheme4}. Only 2-3 iterations are required for the convergence of \textit{Scheme3}. This makes this scheme very competitive in terms of computational cost with respect to the schemes using Newton iteration, such as the schemes based on entropy \cite{Grun2000,Bertozzi3}, which require a large number of iterations for convergence \cite{Bertozzi1}. Moreover, the solution computed from \textit{Scheme3} is always positive, as opposed to \textit{Scheme1} and \textit{Scheme4}, for which the positivity of the computed solution is only guaranteed at the limit $\epsilon\rightarrow0$, a condition that one cannot reach in practice. Hence, \textit{Scheme3} appears to be the most competitive in terms of accuracy and efficiency.

\section{Conclusion}
\label{SectConclusion}
The present study provides several mixed finite element methods to approximate high-order nonlinear diffusion equations. We propose new techniques that allow the computed solution to satisfy all desired physical properties such as mass-conservation, positivity or boundedness of the solution  and dissipation of energy, by solving an optimization problem after getting a first approximate solution from an appropriate finite element method for the variational problem. We also develop new techniques to build truncation schemes preserving the total of mass. Accuracy and efficiency of the proposed schemes are tested and compared using relevant numerical examples. The results clearly demonstrated the ability of the proposed schemes to resolve fourth-order nonlinear diffusion equations and inherit all the physical properties from the associated physical equations.  Some applications are presented including the spreading of droplet on a solid surface and  the modeling of phase separation in a binary mixture. In terms of accuracy and efficiency, the proposed method that uses the conservative truncation techniques appears to be the most promising scheme among all the studied methods.

\section*{Acknowledgements}
The first author was supported through a Postdoctoral Fellowship of the UM6P/OCP group of Morocco and a Postdoctoral Fellowship of the Fields Institute. The second author acknowledges funding from  UM6P/OCP group of Morocco. The research of the third author is funded through a Discovery Grant of the Natural Sciences and Engineering Research Council of Canada.

\bibliographystyle{plain} 
\bibliography{biblio}
\end{document}